\newcommand{\SN}{\mathcal{N}}
\newcommand{\SE}{\mathcal{E}}
\newcommand{\SF}{\mathcal{F}}
\newcommand{\R}{\mathbb{R}}
\newcommand{\eps}{\varepsilon}
\newcommand{\SU}{\mathcal{U}}
\theoremstyle{remark}
\newtheorem{remark}{\bf{Remark}}[section]
\theoremstyle{definition}
\newtheorem{theorem}{Theorem}[section]
\theoremstyle{definition}
\newtheorem{lemma}[theorem]{Lemma}
\theoremstyle{definition}
\newtheorem{definition}[theorem]{Definition}
\newtheorem{prop}{Proposition}[section]
\newtheorem{maintheorem}{Theorem}
\newtheorem{coro}[maintheorem]{Corollary}
\numberwithin{equation}{section}
\begin{document}

\title[Contributions to  the Theory of ASH Flows]{Contributions to  the Theory of Asymptotically Sectional Hyperbolic Flows}


\author{Alexander Arbieto}
\address{Instituto de Matem\'atica, Universidade Federal do Rio de Janeiro, Rio de Janeiro, Brazil.}
\email{arbieto@im.ufrj.br}

\author{Miguel Pineda}
\address{Instituto de Matem\'atica, Universidade Federal do Rio de Janeiro, Rio de Janeiro, Brazil.}
\email{miguel.pineda@im.ufrj.br}

\author{Elias Rego}
\address{Department of Applied Mathematics, AGH University of Science and Technology, Krakow, Poland.}
\email{rego@agh.edu.pl}

\author{Kendry Vivas}
\address{Departamento de Matematicas, Universidad Cat\'olica del Norte, Antofagasta, Chile.}
\email{kendry.vivas01@ucn.cl}

\thanks{AA was partially supported by CAPES, CNPq, PRONEX-Dynamical Systems and FAPERJ E-26/201.181/2022 ``Programa Cientista do Nosso Estado from Brazil''. MP was partially supported by CAPES from Brazil. ER was supported by the European Union's European Research Council Marie Sklodowska-Curie grant No.101151716}

\keywords{Asymptotically sectional-hyperbolic, growth rate of periodic orbits, entropy}
\subjclass[2020]{Primary: 37C10, Secondary: 37C27.}

\begin{abstract}
In this paper, we make several contributions to the theory of asymptotically sectional-hyperbolic (ASH) flows. First, we prove that every star ASH attractor for a $C^2$ vector field is, in fact, sectional-hyperbolic (SH). Second, we establish that all ASH attractors exhibit the property of entropy flexibility. Additionally, we show that any ASH attractor for three-dimensional vector fields is entropy-expansive and admits periodic orbits. Finally, we provide a lower bound for the growth rate of periodic orbits in an ASH attractor.

\end{abstract}

\maketitle

\tableofcontents

\section{Introduction}

The study of the dynamics of flows, also known as continuous-time dynamics, from both topological and statistical viewpoints, is a significant area of research in mathematics. This field traces its origins back to the work of Poincaré, who utilized these concepts to deepen the understanding of the topology of underlying manifolds.

Over time, the study of continuous-time dynamics evolved into a distinct mathematical discipline. Following the groundbreaking contributions of Anosov and Smale, it emerged as a particularly fruitful area of research. Specifically, the discovery of the hyperbolic nature of flows through Smale's horseshoe \cite{S} as a source of stability, along with Anosov's work on the hyperbolicity of the geodesic flow on negatively curved Riemannian manifolds \cite{A}, became celebrated cornerstones of the theory. These developments introduced techniques from both differential topology and ergodic theory, greatly advancing the understanding of the dynamics of a broad and important class of flows.

Subsequently, concepts from information theory and statistical physics, introduced by Kolmogorov, Sinai, Ruelle, Bowen, and others, were incorporated to better understand the complexity of dynamics and the relevant invariant measures of the system, particularly those that maximize entropy and, by the variational principle, achieve topological entropy. This integration led to a fruitful symbiosis between topological dynamics and the ergodic theory of dynamical systems.

One particular source of chaos and positive entropy is the presence of horseshoes. The pursuit of identifying horseshoe-type subdynamics within more general flows became a highly active area of research (\cite{Xma}, \cite{FL}, \cite{GY}). A more challenging question also emerged, which can be termed the \textit{flexibility of entropy}: given any value between zero and the 
topological entropy, can we find a compact subset (or an ergodic measure) whose entropy matches that value? One of the aims of this article is to address this question for certain flows that present new challenges due to the presence of singularities, as will be discussed in the following sections.

Although hyperbolic theory is very powerful, it requires that the subspace generated by the vector field is continuous,  which implies, among other things, that its dimension must be locally constant. Consequently, no singularity can be approached by regular orbits within the hyperbolic set.  In \cite{Lo}, Lorenz discovered a robust attractor (with a dense orbit) that exhibits some properties resembling those of hyperbolic systems, yet includes singularities that are accumulated by regular orbits within the attractor. Inspired by this model, Guckenheimer, Shilnikov and Turaev in an independently way (\cite{Gu1} and \cite{T}) introduced a geometric example that resembles the system studied by Lorenz. Therefore, to develop a comprehensive program for understanding most dynamical systems, it is essential to analyze such open sets.

The search for such a program motivated the search for a systematic theory to describe such dynamics, which was established by Morales and Metzger \cite{Me3} under the name \textit{sectional hyperbolicity} (see Section \ref{Prelim}). In their paper, they proved that the geometric Lorenz attractor is sectional-hyperbolic. Moreover, in a seminal paper by Morales, Pacifico, and Pujals \cite{MPP}, it was shown that any robust attractor in a 3-dimensional manifold must be sectional-hyperbolic. In fact, Tucker \cite{Tu} later proved that the attractor obtained from original Lorenz equations is, in fact, sectional-hyperbolic.

One might assume that sectional-hyperbolic systems share similar properties with hyperbolic ones. While this is sometimes true, it is important to exercise caution. For instance, the problem of finding a periodic orbit can yield different outcomes: any isolated nontrivial hyperbolic set must have a periodic orbit, but there are isolated nontrivial sectional-hyperbolic sets without periodic orbits \cite{Mo}. We will revisit this issue later. Another example is the following: Every isolated transitive hyperbolic set is robustly transitive, but this is not true in the sectional hyperbolic theory anymore

In his PhD thesis, Rovella constructed another flow, similar to the Lorenz attractor, but with a singularity exhibiting different behavior \cite{Ro}. He was able to find examples of attractors that, unlike the Lorenz attractor, do not exhibit robustness. However, inspired by several works on the quadratic family, such as those by Benedicks-Carleson \cite{BC} and Jakobson \cite{Ja}, as well as on homoclinic bifurcations (see also the work of Palis and Yoccoz \cite{PY}, he proved that such attractors persist in a certain sense, likewise in a codimension 2 submanifold. Once again, understanding such examples is crucial for a comprehensive understanding of dynamical systems.

It turns out that the Rovella attractor fits within another theory with a hyperbolic flavor: the \textit{asymptotic sectional-hyperbolic dynamics}, introduced by \cite{Mo4}. 
In fact, in \cite{SMV2} the authors shown that the Rovella attractor is an asymptotically sectional-hyperbolic attractor. Another example of sets satisfying this weak kind of hyperbolicity is known as the contractive singular horseshoe \cite{Mo4}. It should be noted that these examples are not sectional-hyperbolic.

In this article we continue the study of such dynamics. Let us now precise its definition. Let $M$ be a compact Riemannian manifold endowed with metric $d$, induced by the Riemannian metric $\Vert\cdot\Vert$. We denote by $X$ to a $C^1$ vector field on $M$, and we will refer by its {\it{flow}} on $M$ to the family of maps $\Phi=\lbrace X_t\rbrace_{t\in\mathbb{R}}$, induced by $X$. A compact subset  $\Lambda$ of $M$ is called $X$-invariant if $X_t(\Lambda)=\Lambda$, for every $t\in \mathbb{R}$. 

Next, recall that a compact invariant set $\Lambda$ has a {\it{dominated splitting}} if there are a continuous invariant  splitting $T_{\Lambda}M=E\oplus E^c$ (respect to $DX_t$) and constants $K,\lambda>0$ satisfying
\begin{displaymath}
\frac{\Vert DX_t(x)\vert_{E_x}\Vert}{m(DX_t(x)\vert_{E^c_x})}\leq Ke^{-\lambda t}, \quad \forall x\in\Lambda,\forall t>0,
\end{displaymath} 
where $m(A)$ denotes the conorm of $A$. In this case, we say that $E^c$ is {\it{dominated}} by $E$. We say that $\Lambda$ is \textit{partially hyperbolic} if $E$ is a contracting  subbundle , i.e.,$$\Vert DX_t(x)\vert_{E_x}\Vert\leq Ke^{-\lambda t},$$ for every $t>0$ and $x\in\Lambda$. Finally, denote by $W^s(Sing(X))$ the union of the stable manifolds of the singularities of $X$. 

\begin{definition}
Let $\Lambda$ be a compact invariant partially hyperbolic set of a vector field $X$. We say that $\Lambda$ is {\it{asymptotically sectional-hyperbolic}} (ASH for short) if the singularities of $\Lambda$ are hyperbolic and its central subbundle is eventually asymptotically expanding outside the stable manifolds of the singularities, i.e., there exists $C>0$ such that 
\begin{equation}\label{ash}
\limsup_{t\to+\infty}\frac{\log\vert \text{det}(DX_t(x)\vert_{L_x})\vert}{t}\geq C,
\end{equation}
for every $x\in \Lambda'=\Lambda\setminus W^s(Sing(X))$ and every two-dimensional subspace $L_x$ of $E^c_x$. We say that an ASH set is \textit{non-trivial} if it is not reduced to a singularity. 
\end{definition}

\begin{remark}
ASH sets satisfy the \textit{Hyperbolic lemma} i.e., any compact and invariant set without singularities is hyperbolic.  The proof of this result can be found in \cite{SMV2}. 
\end{remark}

It can be easily seen from the definition that the sectional-hyperbolic theory is encompassed within asymptotically sectional-hyperbolic theory, which in turn contains the hyperbolic theory. Moreover, the Rovella's attractor and the contracting singular horseshoes show that those inclusions are proper. An important consideration regarding the difference between sectional hyperbolicity and asymptotic sectional hyperbolicity is that in the case of sectional-hyperbolic dynamics, uniform estimates are often obtainable (which hold for nearby vector fields). However, in the asymptotic scenario, we must exercise more caution, as in the case of the Rovella attractor, where uniform estimates are not expected. Indeed, from ASH property we see that for every point $x\in\Lambda$ outside $W^s(Sing(X))$, and every plane $L_x \in G(2,F)$ (the  Grasmannian of two-planes contained in the subbundle $F$) there is an unbounded increasing sequence of positive numbers $t_k=t_k(x,L_x)>0$, called \textit{hyperbolic times}, such that
\begin{equation}\label{ht}
\vert \det DX_{t_k}(x)\vert_{L_x}\vert\geq e^{Ct_k},\quad k\geq 1.
\end{equation}
Any unbounded increasing sequence satisfying the relation \eqref{ht} will be called a sequence $C$-\textit{hyperbolic times} for $x$.

A vector field $X$ is \textit{star} if there is a $C^1$-neighborhood of $X$ formed by vector fields whose all singularities and periodic orbits are hyperbolic.  Star vector fields  form a key concept for dealing with global dynamics. They were introduced by Liao and Mañé, who showed that if the dynamics cannot bifurcate through non-hyperbolic periodic orbits, then robustly, all periodic orbits must exhibit uniform hyperbolic strength up to their period, along with a dominated splitting arising from the union of their hyperbolic splittings (see \cite{SLiao},\cite{Man}). Morales and Pacifico (in dimension 3) and Shi, Gan and Wen  (in dimension 4) proved that, generically, the star property is equivalent to sectional hyperbolicity (see \cite{MP5} and \cite{shiganwen}). However, in higher dimensions, this equivalence does not hold, as exemplified by the work of Bonatti and Da Luz \cite{BdL}. Thus, one can ask when the ASH theory diverges from the star theory. Our first result shows that, in any dimension, when restricted to asymptotic sectional-hyperbolic dynamics, the star property is equivalent to sectional hyperbolicity.

\begin{maintheorem}\label{StarinterASH}
    Every asymptotically sectional-hyperbolic attractor associated to a $C^2$-vector field $X$ on  $M$ satisfying the star property is sectional-hyperbolic.
    
\end{maintheorem}

The crucial step in proving prove Theorem \ref{StarinterASH} under $C^2$-regularity is the existence of a  periodic orbit contained in the attractor $\Lambda$. Nevertheless, by our next results, one can obtain periodic orbits under the assumption that $X$ is of class $C^1$ with positive entropy. Moreover, as we will see later in Theorem \ref{HomClass3}, this hypothesis is satisfied in the three-dimensional scenario. Consequently, Theorem \ref{StarinterASH} is valid for any ASH attractor with positive entropy associated to a $C^1$ vector field and, in particular, for any ASH attractor for $C^1$ vector fields on three-dinensional manifolds.

Next, we delve into the entropy theory of ASH flows. More precisely, we address their entropy flexibility. Here, we denote by $h(X)$ and $h_{\mu}(X)$ the topological and metric entropies of $X$, respectively (see Section \ref{Prelim} for precise definitions). We say that a subset $\Lambda$ has \textit{entropy flexibility} if for every $h\in [0,h(X))$ there are a compact  invariant subset $K\subset \Lambda$ and an invariant measure $\mu$ such that $$h=h(X|_K)=h_{\mu}(X).$$ 

In our next result, we address the entropy flexibility of ASH flows.

\begin{maintheorem}\label{entropyflex}
    Let $X$ be a $C^1$-vector field on $M$.  Suppose $M$ contains an asymptotically sectional-hyperbolic attractor $\Lambda$ for $X$. Then  $\Lambda$ has entropy fexibility.  
\end{maintheorem}

As mentioned earlier, due to the variational principle, it is natural to inquire whether a measure of maximal entropy exists. 
Bowen, in \cite{B}, introduced a property called \textit{entropy-expansiveness} to guarantee the upper semi-continuity of the entropy map, thus ensuring the existence of measures with maximal entropy. In our next result, we prove this property for ASH attractors in dimension three.

\begin{maintheorem}\label{Entrexp}
 Every asymptotically sectional-hyperbolic attractor $\Lambda$ associated with a $C^1$ vector field $X$ on a three-dimensional manifold $M$ is entropy-expansive. 
\end{maintheorem}

Another natural question concerns the positivity of entropy. As discussed earlier, this is related to the existence of horseshoe-like subdynamics, particularly the presence of periodic orbits. As showed by \cite{BM}, it is known that any attracting\footnote{A set $\Lambda$ is attracting if it has a neighborhood so that every point in the neighborhood eventually enters and remains within the set under the dynamics, i.e., $\Lambda=\bigcup_{t>0}X_t(U)$ for some open set satisfying $X_1(\overline{U})\subset U$.} sectional-hyperbolic set has a periodic orbit. However, there are attracting ASH sets without periodic orbits \cite{SMV2}. Thus, the issue of the existence of periodic orbits remains a subtle one. Our next result addresses this problem in dimension three.

\begin{maintheorem}\label{HomClass3}
Any asymptotically sectional-hyperbolic attractor $\Lambda$ associated to three-dimensional vector fields $X$ of class $C^1$ has a periodic orbit. Actually it contains a nontrivial homoclinic class. Thus its topological entropy is positive. If the periodic orbits are dense on $\Lambda$, then it is a homoclinic class.
\end{maintheorem}

We suspect that the attractor is generally a homoclinic class. Indeed, this holds true in the sectional-hyperbolic setting, as shown by Arroyo and Pujals \cite{AR}. However, if we consider higher regularity, this result extends to any dimension.

\begin{maintheorem}\label{HomClass}
    If a $C^2$-vector field $X$ on $M$ contains a asymptotically sectional-hyperbolic attractor, then $M$ contains a non-trivial homoclinic class.
\end{maintheorem}

 All the Theorems \ref{Entrexp},  \ref{HomClass3}  and \ref{HomClass} have consequences about the entropy of the attractor under perturbations.

\begin{coro}\label{lowersc}
Let $X$ be a $C^1$ vector field on $M$, and let $\Lambda$ be an asymptotically sectional-hyperbolic attractor. Then, there is a neighborhood $U$ of $\Lambda$ and a $C^1$ neighborhood $\mathcal{U}$ of $X$ such that $X\vert_{\Lambda}$ is a point of lower semicontinuity for the entropy function on $$\mathfrak{X}^1(M,U)=\left\lbrace Y\vert_{\Lambda_Y} : \Lambda_Y=\bigcap_{t\geq 0}Y_t(U)\right\rbrace.$$    In addition, if $M$ is three-dimensional, then $X|_{\Lambda}$ is a point of continuity for the entropy function on $\mathfrak{X}^1(M,U)$.
\end{coro}
    
In our final main result, we put the topological and measure theoretical viewpoint together to relate the entropy with the periodic orbits of a ASH flow. This result is based on the  work of Bowen  in the begining of seventies \cite{B}, where its shown that, for axiom A systems, there is a relationship between the topological entropy and the growth rate of the periodic orbits. Namely, he proved that if $f$ is an Axiom A diffeomorphism, then 
\begin{displaymath}
    \limsup_{t\to\infty}\frac{1}{t}\log(\# P_n(f))=h(f), 
\end{displaymath}
where $\# P_n(f)$ denotes the number of periodic orbits for $f$ of period $n$. Later, Katok in \cite{K} proved for $C^{1+\alpha}$ surface diffeomorphisms that the topological entropy is a lower bound for the growth rate of its periodic orbits. In \cite{WYZ} this result was extended to generic $C^1$ vector fields $X$. In this paper we obtain this result for ASH attractors associated to $C^1$ vector fields $X$. More precisely, we have the following theorem:  
\begin{maintheorem}\label{growth}
Let $X$ be a $C^1$ vector field on $M$.  Suppose $M$ contains an asymptotically sectional-hyperbolic attractor $\Lambda$ for $X$. Then,  
\begin{equation*}
    \limsup_{t \to \infty}\frac{1}{t}\log \# P_{t}(X|_{\Lambda}) \geq h(X|_{\Lambda}).
\end{equation*}
\end{maintheorem}

The organization of this paper is as follows: In Section \ref{Prelim}, we introduce the basic concepts and results that will be used throughout this text. In section \ref{SecHypMeasures}, we present some basic preliminary lemmas regarding the theory of ASH flows and we will study the hyperbolicity of the ergodic measures for ASH sets. Section \ref{SecEntrApp} is devoted to studying the entropy theory of ASH attractors and some of its applications. More precisely, we prove Theorem \ref{StarinterASH}, Theorem \ref{entropyflex}, Theorem \ref{HomClass}, Corollary \ref{lowersc} and Theorem \ref{growth}. In section \ref{3d} we will study ASH dynamics in the three-dimensional setting and we will provide the proofs for Theorem \ref{Entrexp} and Theorem \ref{HomClass3}.

\section{Preliminaries}\label{Prelim}

This section is devoted to provide the precise definitions for all the concepts used in this work. 
We also provide some useful known results from the theory of singular flows.

\subsection{Basic setting}
Throughout this work, $M$ denotes a compact Riemannian manifold endowed with a metric $d$, induced by the Riemannian metric $\Vert\cdot\Vert$. We denote by $X$ to a $C^r$ vector field on $M$.  For any $r\geq 1$,  denote by $\mathfrak{X}^r(M)$ the space of $C^r$ vector fields on $M$ endowed the $C^r$ norm. Denote by $\Phi=\lbrace X_t\rbrace_{t\in\mathbb{R}}$ the \textit{flow} induced by $X$. As usual, we say that a set $\Lambda$ is $X$-invariant if $X_t(\Lambda)=\Lambda$, for every $t\in \mathbb{R}$.

For $x\in M$, the \textit{orbit} of $x$ is the set  $$\mathcal{O}(x)=\{X_t(x) : t\in \mathbb{R}\}.$$ 
For $a,b\in\mathbb{R}$, the \textit{orbit segment from $a$ to $b$} of a point $x$ is defined by $$X_{[a,b]}(x)=\lbrace X_t(x) : t\in[a,b]\rbrace.$$ We say that $\sigma\in M$ is a \textit{singularity} of $X$ if $X(\sigma)=0$. Denote the set of singularities of the vector field $X$ by $Sing(X)$.  A point $x\in M$ is \textit{regular} if it is not a singularity. A regular point $x$ is \textit{periodic} if there is $t>0$ such that $X_t(x)=x$. The \textit{period} $\pi(x)$ a periodic point $x$ is defined as the smallest positive number $t$ satisfying $X_t(x)=x$. Denote by $Per(X)$ the set of periodic orbits of $X$. 
 
We say that a compact invariant set $\Lambda$ is {\it{attracting}} if there exists a neighborhood $U_0$ of $\Lambda$ (called trapping region) such that $$\overline{X_t(U_0)}\subset U_0,\quad\forall t>0,$$ and
\begin{displaymath}
\Lambda=\bigcap_{t\geq 0}X_t(U_0).
\end{displaymath}
We say that an attracting set $\Lambda$ is an  {\it{attractor}} if it is transitive, i.e., there is $z\in\Lambda$ such that $\overline{\mathcal{O}_+(z)}=\Lambda$, where $\mathcal{O}_+(z)$ denotes the positive orbit of $z$. 

\subsection{Hyperbolic, Sectional-Hyperbolic and Star Flows}

In the sequel, we shall recall some concepts that will be widely explored in this work, and were mentioned in the previous section. 
\begin{definition}[Dominated splitting]
  A compact invariant set $\Lambda$ is said to have a \textit{dominated splitting} if there are a continuous invariant  splitting $T_{\Lambda}M=E\oplus E^c$  and constants $K,\lambda>0$ satisfying:
\begin{displaymath}
   \frac{\Vert DX_t(x)\vert_{E_x}\Vert}{m(DX_t(x)\vert_{E^c_x})}\leq Ke^{-\lambda t}, \quad \forall x\in\Lambda,\forall t>0,
\end{displaymath}
where $m(A)$ denotes the conorm of a linear transformation $A$. In this case, we say that $E$ is \textit{dominated} by $E^c$. 
\end{definition}

\begin{definition}[Hyperbolic set]
A compact and invariant set $\Lambda$ is said to be \textit{hyperbolic} if it has a dominated splitting $T_{\Lambda} M=E^s\oplus \langle X \rangle \oplus E^u$ such that, $E^s$ is contracting  and $E^u$ is expanding, i.e., $$\Vert DX_t(x)\vert_{E_x}\Vert\leq Ke^{-\lambda t} \textrm{ and } \Vert DX_{-t}(x)\vert_{E^u_x}\Vert\leq Ke^{-\lambda t},$$ for every $t>0$ and $x\in\Lambda$.
\end{definition}    

A \textit{hyperbolic periodic point} is a periodic point whose orbit is a hyperbolic set.  A singularity is hyperbolic if it is a hyperbolic set.

\begin{definition}[Partially Hyperbolic set]
A compact and invariant set $\Lambda$ is said to be \textit{partially hyperbolic} if it has a dominated splitting $T_\Lambda M=E\oplus E^c$ such that $E$ is contracting.      
\end{definition}

\begin{definition}[Sectional Hyperbolic set]
Let $\Lambda$ be a compact invariant partially hyperbolic set of a vector field $X$ whose singularities are hyperbolic.  We say that $\Lambda$ is {\it{sectional-hyperbolic}} (SH for short) if its central subbundle is sectional expanding, i.e., there exists $K, \lambda>0$ such that for every two-dimensional subspace $L_x$ of $E^c_x$ one has
\begin{equation}\label{SH}
\vert \text{det}DX_t(x)\vert_{L_x}\vert\geq Ke^{\lambda t},\quad\forall x\in\Lambda,\forall t> 0.
\end{equation}
\end{definition}

We now conclude this subsection by providing the definition of star flow that will be considered in Theorem \ref{StarinterASH}.
\begin{definition}[Star flow]
    A vector field $X$ is said to be \textit{star vector field} (or \textit{star flow}) if there is a $C^1$-neighborhood $\SU$ of $X$ such that if $Y\in \mathcal{U}$, then any critical element of $Y$ is hyperbolic. We say that a compact invariant set $\Lambda$ for $X$ is \textit{star} if there is a neighborhood $U$ of $\Lambda$ such that $X$ is a star flow on $U$.
\end{definition}

\begin{remark} We have the following remarks
    \begin{itemize}
        \item The SH flows are star flows. Note that the partial hyperbolicity and the property (\ref{SH}) are open conditions. Therefore, by the hyperbolic lemma, we have that SH flows are star flows.
        \item ASH flows that are not SH are not contained within star flows. Indeed, by Theorem \ref{StarinterASH}, we have that the intersection of ASH flows and star flows are the SH flows. In particular, Rovella's attractor is an ASH flow that is not a star flow.
    \end{itemize}
\end{remark}

\subsection{Stable Manifolds and Homoclinic Classes}

Recall that for a hyperbolic periodic point $p$ associated to a $C^1$ vector field $X$, the \textit{strong stable} and  \textit{strong unstable manifold} of $p$ are defined, respectively, by 

$$W^{ss}(p)=\{y\in M :  \lim\limits_{t\to \infty}d(X_t(p),X_t(y))= 0\}$$
and 
$$W^{uu}(p)=\{y\in M :  \lim\limits_{t\to-\infty}d(X_t(p),X_t(y))= 0\}.$$
We then define the stable and unstable manifolds of $p$, respectively,  by $$W^s(p)=\bigcup_{t\in \mathbb{R}}W^{ss}(X_t(p))  \textrm{ and }  W^u(p)=\bigcup_{t\in \mathbb{R}}W^{uu}(X_t(p)).$$

Now, recall that for a hyperbolic periodic orbit $p$ associated to a $C^1$ vector field $X$, the \textit{homoclinic class} of $p$, denoted by $H(p)$, is defined as
\begin{displaymath}
H(p):=\overline{W^s(p)\pitchfork W^u(p)}.
\end{displaymath}
In this way, for any pair of periodic orbits $\gamma_p$ and $\gamma_q$, we say that  $\gamma_p\sim \gamma_q$  if $W^s(p)\pitchfork W^u(p)$ and $W^s(q)\pitchfork W^u(p)$. In this case, they belong to the same homoclinic class. We say that a homoclinic class $H(p)$ is \textit{non trivial} if $H(p)\neq\mathcal{O}(p)$.

\subsection{Ergodic Theory}
Next, we reall some concepts from the ergodic theory of flows. We say that a Borelian probility measure $\mu$ in $M$ is \textit{$\Phi$-invariant} if $\mu(X_t(A))=\mu(A)$, for every Borelian subset $A\subset M$ and every $t\in \mathbb{R}$. An invariant probability measure is \textit{ergodic} if for every $X$-invariant set 
$A\subset M$, one has $\mu(A)\mu(A^c)=0$.

The following theorem, due to Oseledets \cite{Osel}, will be crucial to our purposes.

\begin{theorem}[\textbf{Oseledets}] \label{Oseledecflow}
 Let $X$ be a $C^1$-vector field and $\mu$ a $\Phi$-invariant probability measure. There is a $\Phi$-invariant full set $B$ such that for every $x\in B$, there are $k(x)>1$, real numbers $\chi_1(x)<\cdots < \chi_{k(x)}(x)$, and a splitting $$T_xM=\bigoplus_{i=1}^{k(x)} H_i(x)$$ satisfying the following properties:

\begin{itemize}
\item The maps $x\mapsto H_i(x)$, $i=1,\ldots, k(x)$, are measurable. 
    \item The splitting is $DX_t$-invariant, i.e., $DX_t(x)H_i(x)=H_i(X_t(x))$ for every $t\in\mathbb{R}$. 
    \item For every $v\in H_i(x)\setminus\lbrace0\rbrace$, $i=1,\ldots, k(x)$), the following limit exists: 
    $$\chi(x,v)=\lim_{t \rightarrow \pm\infty} \frac{1}{\vert t\vert} \log \Vert DX_t(x) v\Vert=\chi_i(x).$$
    \item For $S\subset N:=\{1,2,\dots,k(x)\}$, let $H_S(x):=\bigoplus_{i\in S}H_i(x)$. Then, 
\begin{equation} \label{angles} 
    \lim_{t \rightarrow \infty} \frac{1}{t} \log (\sin\vert\measuredangle (H_S(X_t(x)),H_{N\setminus S}(X_t(x)))\vert)=0,
\end{equation}
where $\measuredangle (u,v)$ denotes  is the angle formed by $u$ and $v$.
    Moreover, for $u,v \in H_i$ we have
   \begin{equation*} \label{uvinH} 
    \lim_{t \rightarrow \infty} \frac{1}{t} \log \sin| \measuredangle (DX_t(x)u,DX_t(x)v)|=0.  
\end{equation*}
\end{itemize}
\end{theorem}

The numbers $\chi_i(x)$, $i=1,\ldots k(x)$, given in Theorem \ref{Oseledecflow} are called the \textit{Lyapunov exponents of }$x$.  Next, we provide a list of elementary facts about Lyapunov exponents that will be used in section 3.

\begin{itemize}
    \item $\chi(x,0)=-\infty$,
    \item $\chi(x,\alpha v)=\chi(x,v)$ for $\alpha \in \R \setminus \{ 0\}$,
    \item $\chi(x,v+w)\leq \max \{ \chi(x,v), \chi(x,w)\}$. Furthermore if $\chi(v)\neq \chi(w)$, then $\chi(x,v+w)= \max \{ \chi(x,v), \chi(x,w)\}$.  
\end{itemize}

\subsection{Topological Entropy}
Let $A\subset M $ and fix $\varepsilon, n>0$. We say that a subset $K$ of $A$ is 
\begin{itemize}
    \item a \textit{$n$-$\varepsilon$-separated set of $A$} if for any pair of distinct points $x,y\in K$ there is some $0\leq n_0\leq n$ such that $d(f^{n_0}(x),f^{n_0}(y))>\varepsilon$. Denote $S(n,\varepsilon, A)$ the maximal cardinality of an $n$-$\varepsilon$-separated subset of $A$.
    \item a \textit{$n$-$\varepsilon$-generator for $A$} if for every $x\in A$ there exists $y\in K$ such that $d(f^i(x),f^i(y))\leq \varepsilon$, for  every $0\leq i\leq n$. Denote $R(n,\varepsilon, A)$ the minimum carnality of the $n$-$\varepsilon$-generators for $A$.
\end{itemize}
 
Note that $ S(n,\varepsilon, A)$ and $R(n,\varepsilon,A)$ are always finite due to the compactness of $M$.  We then define the \textit{topological entropy of $f$ on $A$} as the number  $$h(f,A)=\lim\limits_{\varepsilon\to 0}\limsup\limits_{n\to\infty}\frac{1}{n}\log(S(n,\varepsilon, A))=\lim\limits_{\varepsilon\to 0}\limsup\limits_{n\to\infty}\frac{1}{n}\log(R(n,\varepsilon, A)).$$
Observe that when $A=M$, the number $h(f, A)=h(f)$ is the topological entropy of $f$. The definition of topological entropy for flows is similar. In fact, $h(\Phi,A)=h(X_1,A)$.

Now, for a map $f:M\to M$, the \textit{metric entropy} of an invariant measure $\mu$ for $f$ is given by 
\begin{displaymath}
    h_{\mu}(f)=\sup\lbrace h_{\mu}(f,\mathcal{P}): \mathcal{P}\text{ is a finite partition}\rbrace, 
\end{displaymath}
where \begin{displaymath}
    h_{\mu}(f,\mathcal{P})=-\lim_{n\to\infty}\frac{1}{n}\sum_{P\in\mathcal{P}_n}\mu(P)\log\mu(P),\quad \mathcal{P}_n=\mathcal{P}_0\vee f^{-1}(\mathcal{P}_1)\vee\cdots\vee f^{n-1}(\mathcal{P}).
\end{displaymath}

Denote by $\mathcal{M}(\Phi)$ the set of invariant measures for the flow of $X$. In this way, the metric entropy of an invariant measure $\mu$ for the flow $\Phi$ is defined as $h_{\mu}(\Phi)=h_{\mu}(X_1)$.  
 
On the other hand, we called \textit{potential} to a continuous function $\phi:M\to\mathbb{R}$. The \textit{topological pressure} of $\Phi$ with respect to $\phi$ is defined by 
\begin{displaymath}
    P(\Phi,\phi)=\sup_{\nu\in \mathcal{M}(\Phi)}\left\lbrace h_{\nu}(\Phi)+\int\phi d\nu\right\rbrace. 
\end{displaymath}
A measure $\mu\in\mathcal{M}(\Phi)$ is called an \textit{equilibrium state for the potential $\phi$ associated to $\Phi$} if the above supremum is attained at $\nu=\mu$. If $\phi\equiv 0$, the equilibrium state $\mu$ associated to $\phi$ is called \textit{measure of maximal entropy}.

Let $(M,d)$ be a compact metric space and let $f: M\to M$  be a homeomorphism. Recall that the  $\delta$-dynamical ball of a point $x\in M$ is the set 
 $$B^{\infty}_{\delta}(x,f)=\{y\in M : d(f^n(x),f^n(y))\leq \delta, \forall n\in \mathbb{Z}\}.  $$

\begin{definition}
A continuous flow $\Phi$ on a compact metric space $M$  is said to be \textit{entropy-expansive} if its time-one map is entropy-expansive,  i.e., there exists $\delta>0$ such that $h(X_1,B^{\infty}_{\delta}(x, X_1))=0$, for every $x\in M$. 
\end{definition}

In \cite{PYY} the authors showed that any SH invariant set for $C^1$ flows is entropy-expansive in any dimension. The Theorem \ref{Entrexp} states that this property holds for three-dimensional ASH attractors. 

It is worth mentioning that we will prove our results by different approaches. As we will see in Section \ref{3d}, this is due to the lack of uniform area expansion in the central bundle. This represents the main challenge when extending results from the sectional-hyperbolic to the ASH setting. This motivates us to develop new tools to deal with these issues.

\section{Hyperbolic measures for ASH flows}\label{SecHypMeasures}

In this section, we will derive Theorem \ref{erghyp}, an essential tool for proving our main results. To present the statement of this result, it is necessary to recall the notion of a hyperbolic measure. 
\begin{definition}
 An invariant probability measure $\mu$ for the flow of a $C^1$ vector field $X$ is a \textit{hyperbolic measure} if $\mu$-almost every point has only one zero Lyapunov exponent, which corresponds to the flow direction.   
\end{definition} 

Recall that an invariant measure $\mu$ for the flow of $X$ is \textit{supported on $\Lambda$} is $supp(\mu)\subset\Lambda$, where $supp(\mu)$ denotes the support of $\mu$. The main result of this section is as follows. 

\begin{theorem} \label{erghyp}
Let $\Lambda$ be an asymptotically sectional-hyperbolic set for a $C^1$ vector field $X$, and let $\mu$ be a regular ergodic invariant measure for the flow of $X$ supported on $\Lambda$. Then, $\mu $ is a hyperbolic measure.
\end{theorem}

Before proving Theorem \ref{erghyp}, let us collect some some previous lemmas. We decided to present their proofs for the sake of completeness. 

\begin{lemma} \label{X(x)inE^c}
    Let $\Lambda$ be a compact invariant set with a partially hyperbolic splitting  $T_{\Lambda}M=E\oplus E^c$. Then, $X(x)\in E^c(x)$ for any point $x \in \Lambda.$
\end{lemma}

\begin{proof}
First, if $x\in \text{Sing}(X)$, is clear that $X(x)=0\in E_x^c$. Let $x\in\Lambda\setminus Sing(X)$. We claim that $X(x)\notin E_x^s$. Indeed, assume that there is $x_0\in\Lambda\setminus Sing(X)$ such that $X(x_0)\in E_{x_0}^s$. Since $E^s$ is invariant, we have that
\begin{displaymath}
X(X_t(x_0))=DX_t(X(x_0))\in E_{X_t(x_0)}^s,\quad\forall t\in\mathbb{R}.
\end{displaymath}
So, for every $y\in\alpha(x_0)$ one has by continuity of $E^s$ and $X$ that 
\begin{displaymath}
X(y)=\lim_{k\to\infty}X(X_{-t_k}(x_0))\in E_y^s,
\end{displaymath}
so that 
\begin{displaymath}
\Vert X(X_t(y))\Vert=\Vert DX_t(X(y))\Vert\leq e^{-\lambda t}\to0, t\to+\infty. 
\end{displaymath}
Thus, $\omega(y)\subset Sing(X)$ for every $y\in\alpha(x_0)$. Since $\alpha(x_0)$ is closed and invariant it follows that $Sing(X)\cap\alpha(x_0)\neq\emptyset$. 

\textbf{Claim: }Any $\sigma\in Sing(X)\cap\alpha(x_0)$ is of saddle type.

 Indeed, if $\sigma$ were a contracting singularity, it follows that  $\sigma$ is a repeller for  $-X$, which contradicts that $\sigma\in\alpha(x_0)$; while if $\sigma$ were a repelling singularity, we would obtain $E_{\sigma}^s=\lbrace 0\rbrace$, whic is impossible since $\text{dim}E^s$ is locally constant in a neighborhood of $\sigma$ and $X(X_{-t_k}(x_0))\in E_{X_{-t_k}(x_0)}^s$. 

So, we have the following cases for $\alpha(x_0)$:
\begin{itemize}
\item $\alpha(x_0)=\lbrace\sigma\rbrace$: In this case, $x_0\in W^u(\sigma)$. Let consider the vector
\begin{displaymath}
v_t=\frac{X(X_t(x_0))}{\Vert X(X_t(x_0))\Vert},\quad t\in\mathbb{R}.
\end{displaymath}   
Clearly, $v_t\in T_{x_0}W^u(\sigma)\cap E_{X_t(x_0)}^s$, thus
\begin{displaymath}
v=\lim_{t\to-\infty}v_t\in T_{\sigma}W^u(\sigma)\cap E_{\sigma}^s=E_{\sigma}^u\cap E_{\sigma}^s\text{ and }\Vert v\Vert=1.
\end{displaymath}
This shows that $v\neq 0$, which is absurd.
\item $\alpha(x_0)\neq\lbrace\sigma\rbrace$: In this case there is $z\in \left(W^u(\sigma)\setminus\lbrace\sigma\rbrace\right)\cap\alpha(x_0)$ because $\sigma$ is of saddle type. Hence, we obtain a contradiction by reasoning in an analogous way to the above case, by considering $z$ instead of $x_0$. 
\end{itemize} 
This proves the claim. 

Now, notice by the above claim that for every regular popint $x\in\Lambda$, the angle between $X(x)$ and $E_x^s$, denoted by $\measuredangle(X(x),E_x^s)$, is positive. Then, if $V$ is a small neighborhood of the singularities contained in $\Lambda$, the continuous function $\measuredangle(X(\cdot),E_{(\cdot)}^s):\Lambda\setminus V\to\mathbb{R}^+$ is uniformly positive, i.e., there is $\theta>0$ such that $\measuredangle(X(x),E_{x}^s)\geq\theta$ for any $x\in\Lambda\setminus V$. 

Let $x\in\Lambda\setminus\text{Sing}(X)$. Assume that $x\notin W^u(\sigma)$ for every $\sigma\in \text{Sing}(X)\cap\Lambda$. Then, there is a regular point $y\in\alpha(x)$. In particular, we can choose a neighborhood $V_y$ of $y$ satisfying $\measuredangle(X(z),E_{z}^s)\geq\theta$, $\theta>0$, for any $z\in V_y$. In this way, if the sequence $t_n\to+\infty$ satisfies $X_{-t_n}(x)\to y$, it follows that $\measuredangle(X(X_{-t_n}(x)),E_{X_{-t_n}(x)}^s)\geq\theta$ for $n$ large enough. So, since the splitting is dominated, we have 
\begin{displaymath}
\measuredangle(DX_t(X(X_{-t_n}(x))),DX_t(E_{X_{-t_n}(x)}^c))\to0,\quad n\to+\infty.
\end{displaymath}   
In particular, for $\varepsilon>0$ there is $n\in\mathbb{N}$ for which
\begin{displaymath}
\measuredangle(X(x),E_{x}^c)=\measuredangle(DX_{t_n}(X(X_{-t_n}(x))),DX_{t_n}(E_{X_{-t_n}(x)}^c))<\varepsilon. 
\end{displaymath} 
Therefore, $\measuredangle(X(x),E_{x}^c)=0$, which shows thgat $X(x)\in E_x^c$. 

On the other hand, assume that $x\in W^u(\sigma)$ for some $\sigma\in Sing(x)$. Since $T_{\sigma}W^u(\sigma)\cap E_{\sigma}^s=\lbrace 0\rbrace$, it follows that $T_xW^u(\sigma)$ is in the complement of $E_x^s$, so that $T_xW^u(\sigma)\subset E_x^c$. Then, $X(x)\in E_x^c$, because $X(x)\in T_xW^u(\sigma)$. This proves the result. 
\end{proof}

Now, recall that an ergodic probability measure for $X$ is \textit{atomic} if it is supported on either a singularity or periodic orbit. We say that a probability measure $\mu$ is \textit{regular} if it is not atomic.
Let us begin with the following lemma, which says that a regular invariant measure for a $C^1$ vector field $X$ is not supported on $W^s(Sing(x))$.

\begin{lemma}\label{muest}
    Let $\Lambda$ be a compact invariant set for a $C^1$-vector field $X$. If $\mu$ is a regular ergodic measure for the flow of $X$, then $\mu(W^s(Sing(X))=0$.
\end{lemma}
\begin{proof}
     Since $W^s(Sing(X))\setminus Sing(x)$ is invariant, the ergodicity of $\mu$ implies that $\mu(W^s(Sing(X))\setminus Sing(x))$ has either full or zero measure. However, since no point in $W^s(Sing(X))\setminus Sing(x)$ can be recurrent, we concliude by Poincar\'e's recurrence theorem that $\mu(W^s(Sing(X))\setminus Sing(X))=0$. Finally, since $\mu$ is regular and $Sing(X)$ is finite, we have $\mu(Sing(X))=0$. Therefore, $\mu(W^s(Sing(X)))=0$. 
\end{proof}

Now, we present the next lemma, which separates negative and non-negative exponents within the ASH  splitting. 

\begin{lemma} \label{E-E^c+}
Let $\Lambda$ be an asymptotically sectional-hyperbolic set for a $C^1$ vector field $X$, and let $\mu$ be a regular ergodic invariant measure for $X$ supported on $\Lambda$. Let $T_{\Lambda}M=E \oplus E^c$ be the ASH splitting of $\Lambda$. Then,
$$E(x)=\bigoplus_{\chi(x)<0} H_{i}(x) \text{ and } E^c(x)=\bigoplus_{\chi(x)\geq0} H_{i}(x),$$
for $\mu$-almost every point $x\in\Lambda$, where the the subspaces $H_i(x)$ are given by Theorem \ref{Oseledecflow}. 
\end{lemma}

\begin{proof}
    Let $\mu$ be a regular and ergodic measure for $X$ supported on $\Lambda$ and let $B\subset\Lambda$ be the set given by Theorem \ref{Oseledecflow}.
Denote 
$$F^-(x)=\bigoplus_{\chi(x)<0} H_{i}(x) \text{ and } F^+(x)=\bigoplus_{\chi(x)>0} H_{i}(x),\quad  \forall x\in B.$$

We claim that $$E(x)=F^-(x) \textrm{ and } E^c(x)=\langle X(x)\rangle \oplus F^+(x)\quad \forall x\in B.$$ 

To see why the claim holds, recall that $\mu$ is a regular measure and, therefore, is not supported on $Sing(X)$. Moreover, by Lemma \ref{muest} we can assume that $B\cap W^s(Sing(X))=\emptyset$. Let us  first show that $E(x)=F^-(x)$. Indeed, due to the uniform contraction of $E$, the Lyapunov exponent of $x$, concerning any $v\in E(x)$, is negative and therefore $E(x)\subset F^-(x)$. Now, suppose there is a non-zero vector $v\in F^-(x)\setminus E(x)$. We have that $v=u+w$ with $u \in E(x)$, $w\in E^c(x)$ and $w\neq0$. Notice that $w\notin\langle X(x)\rangle$, because
$$\chi(x,w)=\chi(x,v-u)\leq \max\{\chi(x,v),\chi(x,-u)\}\leq \max\{\chi(x,v),\chi(x,u)\}<0.$$
Moreover, we can assume that $w$ satisfies $w\perp X(x)$ and $\Vert w\Vert=1$.

Recall that $X(x)\in E^c(x)$ by Lemma \ref{X(x)inE^c}, and denote $L_x=\langle X(x), w\rangle$ the two-dimensional subspace spanned by the vectors $X(x)$ and $w$. Note that $L_x$ is contained in $E^c(x)$. Let $\theta_t(x)$ be the angle between $DX_t(x)X(x)=X(X_t(x))$ and $DX_t(x)w$. Then,
\begin{eqnarray} \label{AreaPara}
 | \det(DX_t(x)|_L) |=\frac{\sin \theta_t\cdot\Vert X(X_t(x))\Vert\cdot\Vert DX_t(x)w\Vert}{\Vert X(x)\Vert}.  
\end{eqnarray}
On one hand, by the choice of $B$ and the asymptotic area expansion of $\Lambda$, we have
\begin{equation} \label{expASH}
\limsup_{t \rightarrow \infty} \frac{1}{t} \log \Vert \det DX_{t}(x)|_{L_x}\Vert>C>0.
\end{equation}
On the other hand, by definition of the Lyapunov exponents,
\begin{equation} \label{somaexp}
  \limsup_{t \rightarrow \infty} \frac{1}{t} \log \Vert\det DX_{t}(x)|_{L_x}\Vert= \chi(x,X(x))+\chi(x,w)<0.
\end{equation}
This is a contradiction. Therefore, $E(x)=F^-(x)$. In particular, 
\begin{equation} \label{E=F}
  \dim(E^c(x))=\dim(\langle X(x)\rangle\oplus F^+(x)).  
\end{equation}

Next, we show that $E^c(x)=\langle X(x)\rangle\oplus F^+(x)$. For $u \in E^c(x)\cap \langle X(x)\rangle^{\bot}$, by repeating the above argument for the plane $M=\langle X(x),u \rangle$ we have that $\chi(x,u)>0$. This implies that $u \in F^+(x)$. Therefore, $$E^c(x)=(E^c(x) \cap \langle X(x)\rangle^{\bot}) \oplus \langle X(x)\rangle \subset F^+(x) \oplus \langle X(x)\rangle,$$ 
and by (\ref{E=F}) we have $E^c(x)=\langle X(x)\rangle  \oplus F^+(x)$.
\end{proof}

Finally, we are ready to prove Theorem \ref{erghyp}.

\begin{proof}[Proof of Theorem \ref{erghyp}]

Let $\mu$ be a regular ergodic measure for $X$ and let $B$ be the set given by Oseledets theorem. By Lemma \ref{muest}, we can assume $B\cap W^s(Sing(X))=0$. Next, let $T_\Lambda M=E\oplus E^c$ be the ASH splitting associated to $\Lambda$. For $x\in B$, let $H(x)$ be vector subspace, given by Oseledets theorem, associated with the null Lyapunov exponents. Notice that by Lemma \ref{E-E^c+} one has $\langle X(x)\rangle \subset H(x)\subset E_x^c=\langle X(x)\rangle  \oplus F^+(x)$, where $F^+(x)$ is the subspace associated to the positive Lyapunov exponents. So, $H(x)=\langle X(x)\rangle$. This proves the result.  
\end{proof}

\section{Entropy of ASH attractors and applications}\label{SecEntrApp}

In this section, we begin to prove our main theorems. Here, we will be specifically interested in the entropy properties of ASH sets on manifolds of any dimension. In addition, we shall apply these  to provide a proof for Theorem \ref{StarinterASH}, which relates the theories of ASH, SH and star flows.

\subsection{Entropy Flexibility}
Let us begin by studying the entropy flexibility of ASH attractors. Next, we shall be concerned with the proof of Theorem \ref{entropyflex}, whose 
 one of the main ingredients is the Theorem \ref{horseshoe}. Let us first recall the concept of  \textit{horseshoe} for flows. Let $M$ be a compact metric space and  $f: M\to M$ be a homeomorphism. Let $\rho: M\to \mathbb{R}$ be a continuous function. The function $\rho$ is called the roof function. Define  $\overline{M}$  as the quotient space of $M\times \mathbb{R}$, through the equivalence relation $$(x,\rho(x))\sim (f(x),0).$$ It is well known that $\overline{M}$ can be endowed with a metric, making it a compact metric space, we refer the reader to \cite{BW} for the precise details. 

\begin{definition}\label{Def-Susp}
    The suspension flow of $f$ with roof function $\rho$ is the flow $\phi: \mathbb{R}\times \overline{M}\to \overline {M}$ defined 
  as $\phi(t,(x,s))=(f^n(x),s')$, where $n$ and $s'$ satisfy
$$\sum_{j=1}^{n-1}\rho(f^j(x))+s'=t+s, \,\,\,\, \,\,\, 0\leq s'\leq \rho(f^n(x)).$$
\end{definition}

\begin{definition}
    A compact invariant set $K\subset M$ for the flow of $X$ is a \textit{horseshoe} if there is a roof function $\rho$ such that $\Phi|_{K}$ is conjugated to the suspension of the $k$-symbol shift map $\sigma:\Sigma_k\to \Sigma_k$ with roof $\rho$.
\end{definition}
 A \textit{hyperbolic horseshoe} is a horseshoe admitting a hyperbolic splitting for the tangent flow.      
\begin{lemma}\label{horseshoe}
    Let $\Lambda$ be an asymptotically sectional-hyperbolic attractor for a $C^1$-vector field $X$. Suppose $\mu$ is an ergodic measure for the flow of $X$, supported on $\Lambda$ with positive entropy. Then, for every $\eps>0$, there is a hyperbolic horseshoe $K_{\eps}\subset \Lambda$ such that $$|h(X|_{K_{\eps}})-h_{\mu}(X)|\leq\eps.$$
\end{lemma}

The previous result was previously obtained for star flows in \cite{LSWW}, but its proof can be completely reproduced in the context of ASH attractors except for one ingredient, which was unknown until the present work. Namely, the hyperbolicity of ergodic regular measures. Fortunately, this was achieved in Theorem \ref{erghyp}. In this way, we omit the proof of Lemma  \ref{horseshoe} to avoid unnecessary repetition of the same arguments. Now, we are ready to prove Theorem \ref{entropyflex}.

\begin{proof}[Proof of Theorem \ref{entropyflex}]
First, notice that if $h(X|_{\Lambda})=0$, then the result is trivial. So, suppose $h(X|_{\Lambda})=h>0$. By the Variational Principle, there is a sequence of ergodic measures $\mu_n$, supported on $\Lambda$ satisfying $h_{\mu_n}(X\vert_{\Lambda})\to h$. In particular, we can assume  $h_{\mu_n}(X\vert_{\Lambda})>0$, for every $n\geq 1$. In this way, $\mu_n$ is regular, for every $n\geq 1$. Then, Theorem \ref{erghyp} implies that every $\mu_n$ is hyperbolic. 

Fix $\mu_n$ and $\eps>0$. By Lemma \ref{horseshoe}, there is a hyperbolic horseshoe $K_{\eps,n}\subset \Lambda$ whose entropy is $\eps$-close to $h_{\mu_n}(X\vert_{\Lambda})$. So, \cite{LSWW}*{Proposition 1.2} implies that for every $a\in[0,h(X\vert_{K_{\eps,n}}))$, there is an ergodic measure $\mu_{a,n}$, supported on $K_{\eps,n}$, such that $h_{\mu_{a,n}}(X)=a$. 

Next, we will find a compact invariant subset  $K_{a,n}\subset K_{\eps,n}$ satisfying $h(X|_{K_{a,n}})=a$. First, since $K_{\eps,n}$ is a horseshoe, it is conjugated to the suspension flow $\Psi$ of a full shift map $\sigma:\Sigma \to \Sigma$ for some roof function $\rho: M\to (0,+\infty)$. By \cite{Walters}*{Section 7.3}, for every $b\in [0,h(\sigma))$, there is a compact and $\sigma$-invariant set $\Sigma_b\subset \Sigma$ such that $h(\sigma|_{\Sigma_b})=b$. 

We claim that the same property holds for $K_{\eps,n}$ and $\Phi\vert_{K_{\varepsilon,n}}$. Indeed, notice that by Abramov's formula \cite{Ab} one has $$h(X|_{K_{\eps,n}})=\sup\left\{\frac{h_{\mu}(\sigma)}{\int \rho d\mu} : \mu \textrm{ is } \sigma\textrm{-invariant}\right\}.$$
On the other hand, by definition of horseshoe one has $h(X|_{K_{\eps,n}})=h(\sigma)$, and therefore we have by the Variational Principle
$$h(X|_{K_{\eps,n}})=h(\sigma)=\sup\{h_{\mu}(\sigma) : \mu \textrm{ is } \sigma\textrm{-invariant}\}.$$ This shows that 
\begin{equation}\label{4.1}
    \int \rho d\mu=1 \text{ for every } \sigma\text{-invariant measure } \mu. 
\end{equation} 

Now fix a number $a\in [0,h(X|_{K_{\eps}}))$, and let $\Sigma_a\subset \Sigma$ such that $a=h(\sigma|_{\Sigma_a})$. Let $K_{a,n}$ be the suspension of $\sigma|_{\Sigma_a}$ with roof $\rho|_{\Sigma_a}$. Then, by Abramovs's formula and \eqref{4.1} we obtain 
\begin{eqnarray*}
    h(X|_{K_{a,n}})&=&\sup\left\{\frac{h_{\mu}(\sigma)}{\int \rho d\mu} : \mu \textrm{ is } \sigma\vert_{\Sigma_a}\textrm{-invariant}\right\} \\
    &=&\sup\{h_{\mu}(\sigma) : \mu \textrm{ is } \sigma\vert_{\Sigma_a}\textrm{-invariant}\} \\
    &=&h(\sigma|_{\Sigma_a})=a. 
\end{eqnarray*}
This proves the claim.

To finish the proof of entropy flexibility, notice that since $h_{\mu_n}(X)\to h$, then for every $a\in [0,h(X|_{\Lambda}))$, there is $\mu_n$ such that $h_{\mu_n}(X)>a$ and therefore, there are  $\mu_{a,n}$ and $K_{a,n}\subset \Lambda$ such that $h_{\mu_{a,n}}(X)=h(X|_{K_{a,n}})=a$.
\end{proof}

\subsection{Existence of Periodic Orbits}
Next, we shall present the proof of Theorem \ref{HomClass}. Before entering the proof's details, we need the following result:

\begin{lemma}\label{posentr}
    Let $X$ be a $C^2$ vector field over $M$. If $M$ contains an asymptotically sectional-hyperbolic attractor, then $h(X)>0$.
\end{lemma}
\begin{proof}
    Let $\Lambda$ be an ASH attractor for a $C^2$ vector field $X$ on $M$. By Theorem \cite{ASS}*{Theorem B}, $\Lambda$ admits a unique physical measure. Moreover, by the proof of \cite{SMV3}*{Lemma 3.3} this measure is also non atomic. So, the conclusion is derived by applying \cite{SMV}*{Theorem 2.4}, which states that any ASH attractor admitting a non-atomic physical measure has positive topological entropy.  
\end{proof}

\begin{proof}[Proof of Theorem \ref{HomClass}]
Assume that $M$ contains an asymptotically sectional hyperbolic attractor $\Lambda$. By Lemma \ref{posentr}, one has $h(X)>0$. So, Lemma \ref{horseshoe} implies that $\Lambda$ contains a hyperbolic horseshoe. Since hyperbolic horseshoes are homoclinic classes, the proof is complete. 
\end{proof}

Next, we present the proof of Corollary \ref{lowersc}.

\begin{proof}[Proof of Corollary \ref{lowersc}]
Let $\Lambda$ be an ASH attractor for a $C^1$-vector field $X$ on $M$. First, notice that if $h(X|_{\Lambda})=0$, then the result holds trivially. Indeed, since zero is the lowest possible value for the entropy of a flow, then for any other $C^1$-vector field  $Y$ on $M$ and for any compact and $Y$-invariant subset $\Lambda_Y\subset M$, one has $h(Y|_{\Lambda_Y})\geq h(X|_{\Lambda})=0$. 

Next, suppose $h(X|_{\Lambda})>0$ and let $U\subset M$ be a trapping region of $\Lambda$. Fix $\eps>0$. By Lemma \ref{horseshoe}  $\Lambda$ contains a horseshoe $K_{\eps}$ such that 
\begin{equation}\label{6}
h(X|_{K_{\eps}})\geq h(X\vert_{\Lambda})-\frac{\eps}{2}.    
\end{equation}

On the other hand, since hyperbolicity is a robust property, there is an open $\SU\subset \mathfrak{X}^1(M)$ such that any vector field $Y\in \SU$ admits a hyperbolic set $\Lambda_{Y,\eps}\subset\Lambda_Y\subset U$, which is a continuation of $K_{\eps}$, such that
\begin{equation}\label{7}
    h(Y_{\Lambda_Y})\geq h(Y\vert_{\Lambda_{Y,\varepsilon}})>h(X|_{K_{\eps}})-\frac{\eps}{2}.
\end{equation}

Now, we conclude from \eqref{6} and \eqref{7} that $h(Y_{\Lambda_Y})\geq h(X\vert_{\Lambda})-\eps$, and the proof is complete.   
\end{proof}

\subsection{Growth of Periodic Orbits} 
Now, we are going to prove Theorem \ref{growth}. To achieve our goal,  we need to use the technology of the linear Poincar\'e flow, a concept that we now recall in detail. For $X \in \mathfrak{X}^1(M)$, denote the \textit{normal bundle of $X$} by
$$\SN= \bigcup_{x\in M\setminus Sing(X)}\SN_x,$$
where $$\SN_x=\{ v \in T_xM : v\bot X(x)\}.$$ 
Denote  $O_x:T_xM \to \mathcal{N}_x$ the orthogonal projection on $\mathcal{N}_{x}$.

\begin{definition}
The \textit{linear Poincar\'e flow} associated to $X$ is the flow $\mathcal{P}=\lbrace P_t\rbrace_{t\in\mathbb{R}}$ on $\SN$ defined by $$ P_t(x)v:=O_{X_t(x)}DX_t(x)v,\text{ for all } t\in\mathbb{R}.$$   
\end{definition}

Let $T_{\Lambda}M=E\oplus E^c$ be an $X$-invariant splitting on $\Lambda$. Denote $$\SE_x=O_x(E(x))   \textrm{ and } \SE^c_x=O_x(E^c(x)).$$ In this case, one has a splitting of the normal bundle given by $\SN=\SE\oplus\SE^c$. Notice that if $\mu$ is a hyperbolic invariant measure for the flow of $X$, it is possible to consider the following splitting on $sup(\mu)$: $T_{supp(\mu)}M=E^s(x)\oplus\langle X(x)\rangle\oplus E^u(x)$, where 
$$E^s(x)=\bigoplus_{\chi_{i}(x)<0}H_i(x) \text{  and  } E^u(x)=\bigoplus_{\chi_i(x)>0}H_i(x).$$ 
Denote $\SE^s_x=O_x(E^s(x))$ and $\SE^u_x=O_x(E^u(x))$ for every $x\in supp(\mu)$. 

\begin{lemma}[\textit{Theorem 3.4} in \cite{WYZ}] \label{PerEnt}
Let $\mu$ be a regular hyperbolic ergodic measure of $X\in\mathfrak{X}^1(M)$. If the splitting $\mathcal{N}=\mathcal{E}^s\oplus \mathcal{E}^u$ is dominated for the linear Poincar\'e flow $\mathcal{P}$, then 
\begin{equation*}
    \limsup_{t \to \infty}\frac{1}{t}\log \# P_{t}(X) \geq h_{\mu}(X):=h_{\mu}(X_1).
\end{equation*}
\end{lemma}

In order to apply Lemma \ref{PerEnt} for proving Theorem \ref{growth}, we must to lift dominated splittings for $X$ to dominated splittings for the linear Poincaré flow. 

\begin{lemma} \label{DDom}
If $T_{\Lambda}M=E \oplus F$ is a dominated splitting, then $\mathcal{N}=\mathcal{E}\oplus \mathcal{F}$ is dominated splitting w.r.t. the linear Poincaré flow $P_{t}$, where $\SE_x=O_x(E(x)) $ and $\SF_x=O_x(F(x))$.     
\end{lemma}

\begin{proof}
   Fix  $v \in E$. Since  $$T_xM=\mathcal{N}_x \oplus \langle X(x)\rangle,$$ we can write  $v=O_x(v)+\alpha X(x)$ for some $\alpha\in\mathbb{R}$. 
   Therefore, $$ DX_t(x)O_x(v)= O_{X_t(x)}DX_t(x)O_x(v) + \beta X(X_t(x)),$$
for some $\beta \in \R$. 
Thus, we obtain
\begin{eqnarray*}
DX_t(x)v &=& DX_t(x)O_x(v)+ DX_t(x)(\alpha X(x)) \\
&=& O_{X_t(x)}DX_t(x)O_x(v) + \beta X(X_t(x)) + \alpha DX_t(x)X(x) \\
&=& O_{X_t(x)}DX_t(x)O_x(v) + (\alpha + \beta)X(X_t(x)). 
\end{eqnarray*}
On the other hand,
$$DX_t(x)v = O_{X_t(x)}DX_t(x)v + DX_t(x)v - O_{X_t(x)}DX_t(x)v.$$
So, since $DX_t(x)v - O_{X_t(x)}DX_t(x)v\in\langle X(x)\rangle$, 
$$P_t(x)O_x(v)=O_{X_t(x)}DX_t(x)O_x(v) = O_{X_t(x)}DX_t(x)v\in\SE_{X_t(x)}.$$
This shows the $P_t$-invariance of $\SE$.
In addition,  
$$(\alpha + \beta)X(X_t(x)) = DX_t(x)v - O_{X_t(x)}DX_t(x)v.$$
Thus, we have
\begin{eqnarray*}
 \|DX_t(x)v \|&=&\| O_{X_t(x)}DX_t(x)O_x(v) + DX_t(x)v -O_{X_t(x)}DX_t(x)v\| \\
 &=& \|P_t(x)O_x(v)+(Id-O_{X_t(x)})DX_t(x)v \| \\
 &\geq& \|P_t(x)O_x(v)\| -\| Id-O_{X_t(x)}\|\cdot\| DX_t(x)v \| \\
 &\geq& \|P_t(x)O_x(v)\| - \| DX_t(x)v \|, 
\end{eqnarray*}
where the last inequality comes from the fact that $Id-O_{X_t(x)}$ is a
projection and  $$\|Id-O_{X_t(x)}\|\leq 1.$$
We then conclude that $$\|P_t(x)O_x(v)\| \leq 2 \| DX_t(x)v \|.$$

By taking the supreme over all unitary vectors in $\SE_x$, we have 
\begin{equation} \label{EE}
\|P_t(x)|_{\SE}\| \leq 2\| DX_t(x)|_{E}\|    
\end{equation}

By an analogous reasoning, we obtain the $P_t$-invariance of $\SF$ and 
\begin{equation} \label{FF}
 \|P_{-t}(x)|_{\SF}\| \leq 2\| DX_{-t}(x)|_{F}\|.   
\end{equation}
Now, by combining the domination property of the splitting $T_{\Lambda}M=E \oplus F$ with the relations (\ref{EE}) and (\ref{FF}), we have 
\begin{eqnarray*}
    \|P_{t}(x)|_{\SE}\|\cdot\|P_{-t}(x)|_{\SF}\| &\leq & 4 \| DX_{t}(x)|_{E}\|\cdot\| DX_{-t}(x)|_{F}\| \leq 4C e^{-\lambda t},
\end{eqnarray*} 
for every $t>0$. This proves the result.
\end{proof}
\begin{proof}[Proof of Theorem \ref{growth}]
Let $X$ be a $C^1$ vector field. If $h(X)=0$, then the result trivially holds. Therefore, let us suppose $h(X)>0$. Let $\mu$ be an ergodic measure supported on $\Lambda$ with positive topological entropy. This implies, in particular, that $\mu$ is regular. So, by Theorem \ref{erghyp}, $\mu$ is a hyperbolic measure. 

For every Lyapunov regular point $x$ for $\mu$,  Let $T_xM=E^s(x)\oplus\langle X(x)\rangle\oplus E^u(x)$ its Oseledets splitting, where 
$$E^s(x)=\bigoplus_{\chi_i(x)<0} H_{i}(x) \text{ and } E^u(x)=\bigoplus_{\chi_i(x)>0} H_{i}(x).$$
By Lemma \ref{E-E^c+}, we obtain $E(x)=E^s(x)$ and $E^c(x)=\langle X(x) \rangle \oplus E^u(x)$, so that the subbundle $\langle X(x) \rangle \oplus E^u(x)$ is dominated by $E^s(x)$.

Let us now proceed to conclude the proof. First, lift the hyperbolic splitting of $\mu$ to a splitting, $\SE\oplus \SE^c$ of $\SN$, by using the projection map $O$. By Lemma \ref{DDom} the splitting $\SE \oplus \SE^c$ is hyperbolic. In particular, it is dominated for the linear Poincaré flow $P_t$. Thus, we can apply Lemma \ref{PerEnt} to $\mu$ and obtain:
\begin{equation*}
    \limsup_{t \to \infty}\frac{1}{t}\log \# P_{t}(X) \geq h_{\mu}(X):=h_{\mu}(X_1).
\end{equation*}
Finally, by the Variational Principle, we conclude that  
\begin{equation*}
    \limsup_{t \to \infty}\frac{1}{t}\log \# P_{t}(X) \geq h(X):=h(X_1),
\end{equation*}
and the proof is complete.
\end{proof}

\subsection{Proof of Theorem \ref{StarinterASH}}
Next, we provide a proof for Theorem \ref{StarinterASH}.  We will achieve this by applying some of the results previously obtained.  Let $X\in \mathfrak{X}^1(M)$ and $\sigma\in Sing(X)$ be a hyperbolic singularity, and assume that the Lyapunov exponents of $DX_t(\sigma)$ are $\lambda_1\leq \cdots\leq \lambda_s<0<\lambda_{s+1}\leq \cdots\leq \lambda_d$. Recall from \cite{shiganwen} that the \textit{saddle value $sv(\sigma)$ of $\sigma$} is defined by $sv(\sigma)=\lambda_s+\lambda_{s+1}$. In this case, we say that a hyperbolic singularity $\sigma$ is 
\begin{itemize}
    \item \textit{Lorenz-like}: if $\lambda_s+\lambda_{s+1}>0$.
    \item \textit{Rovella-like}: if $\lambda_s+\lambda_{s+1}<0$.
    \item \textit{Resonant}: if $\lambda_s+\lambda_{s+1}=0$. 
\end{itemize}

We say that an attractor $\Lambda$ for a $C^1$ vector field $X$ is \textit{singular} if $Sing_{\Lambda}(X)=Sing(X)\cap\Lambda\neq\emptyset$. It is easy to check that if a singular attractor $\Lambda$ is ASH, then its singularities are of one of the three types mentioned above.  
In particular, the following result property is obtained. 

\begin{lemma} \label{xinW^s}
    Let $\Lambda$ be a non-trivial ASH attractor and let $\sigma\in Sing_{\Lambda}(X)$. Then, $(W^s(\sigma)\setminus \{ \sigma \}) \cap \Lambda \neq \emptyset$ and $(W^u(\sigma)\setminus \{ \sigma \}) \cap \Lambda \neq \emptyset$.
\end{lemma}

\begin{proof}
    Since $\sigma\in\Lambda$, we have that it is hyperbolic of saddle type. In this way, we can find $\varepsilon>0$ such that, for every $z \in M$, if $d(X_t(z),\sigma),\sigma)\leq \varepsilon$ for all $t\geq 0$, then $z\in W^s(\sigma)$. Since $\Lambda$ is non-trivial and transitive, there is a regular point $y \in \Lambda$ whose orbit is dense in $\Lambda$.
    
    Notice that given a neighborhood $U$ of $\sigma$, the closer a point $z \in U$ is to $\sigma$, the longer is the flight time of its orbit in $U$. Thus, it is possible to find a sequence of points $y_n\in\mathcal{O}(y)$ and a arbitrarily large sequence of positive numbers $t_n$, $n\geq 1$, such that $y_n \rightarrow \sigma$ and $$d(X_{t-t_n}(y_n),\sigma) \leq \varepsilon, \forall t\in [0,t_n]\text{ and }X_{-t_n}(y_n) \in B(\sigma, \varepsilon) \setminus B\left(\sigma,\frac{\varepsilon}{2}\right),\quad \forall n\geq 1.$$
    
    Therefore, for every accumulation point $x$ of $\{X_{-t_n}(y_n)\}_{n\geq 1}$ one has by continuity of the flow that $\frac{\varepsilon}{2} \leq d(x,\sigma) \leq \varepsilon$ and $d(X_t(x),\sigma)\leq \varepsilon$ for all $t\geq 0$. This shows that $x\in (W^s(\sigma)\setminus \{ \sigma \}) \cap \Lambda.$ Similar arguments prove that $(W^u(\sigma)\setminus \{ \sigma \}) \cap \Lambda \neq \emptyset$. 
\end{proof}

Now, let $V$ be a finite-dimensional vector space. We denote by $\wedge^2 V$ the second exterior power of $V$, defined as follows: If $v_1, \ldots, v_n$ be a basis of $V$, then $\wedge^2 V$ is the vector space spanned by the set $\left\{v_i \wedge v_j\right\}_{i \neq j}$. In this way, any linear transformation $A: V \rightarrow W$ induces a transformation $\wedge^2 A: \wedge^2 V \rightarrow \wedge^2 W$. Moreover, the element $v_i \wedge v_j$ can be viewed as the 2-plane generated by $v_i$ and $v_j$ if $i \neq j$. See for instance \cite{ASS} for more details.

\begin{definition} The \textit{sectional Lyapunov exponents of $x$} along $E^c$ are the limits
$$\lim_{t \rightarrow \infty }\frac{1}{t}\log \Vert\wedge^2 DX_t(x)\cdot \Tilde{v}\Vert,$$
 whenever they exist, where $\tilde{v} \in \wedge^2 E^c_x \setminus \{0 \}$   
\end{definition}

It turns out that if $\mu$ is an invariant probability measure, $Y$ is the subset given by Oseledec's theorem \ref{Oseledecflow} and $\left\{\chi_i(x)\right\}_{i=1}^{k(x)}$ are the Lyapunov exponents of a point $x \in Y$, then its sectional Lyapunov exponents are given by $\displaystyle\left\{\chi_i(x)+\chi_j(x)\right\}_{1 \leq i<j \leq k(x)}$. Moreover,  if $L_x$ is a 2-plane, then it can be seen as $\widetilde{v} \in \Lambda^2E^c_x \setminus \{0\}$ of norm one. In this way, the asymptotic expansion given in definition of ASH set can be rewritten as follows: There exists $C>0$ such that
$$
\limsup_{t \rightarrow \infty}\frac{1}{t}\log \left\|\wedge^2 DX_t(x) \cdot \tilde{v}\right\|\geq C>0,\quad \forall x\in\Lambda\setminus W^s(Sing(X)).
$$

Next, we present a useful result for proving Theorem \ref{StarinterASH}. But before, let us set some notation. Recall that if $\Lambda$ is ASH, then the ASH splitting of $\Lambda$ is denoted by $T\Lambda=E\oplus E^c$. On the other hand, any singularity contained on $\Lambda$ is hyperbolic and it has a hyperbolic splitting. We shall denote the hyperbolic splitting given by $T_{\sigma}M=\overline{E}^s_{\sigma}\oplus \overline{E}^u_{\sigma}$.

\begin{lemma}\label{lorenz}
    Let $\Lambda$ an asymptotically sectional-hyperbolic attractor for a $C^1$ vector field $X$. Suppose:
    \begin{enumerate}
        \item All the singularities contained in it are Lorenz-like.
        \item For every $\sigma\in Sing(X)$, one has $\dim(E(\sigma))=\dim(\overline{E}^s_{\sigma})+1$.
        \end{enumerate}
 Then, $\Lambda$ is sectional-hyperbolic.  
\end{lemma}

Before to present the proof of above lemma, we state the following theorem, proved by the first author in \cite{Arb}. 

\begin{theorem} \label{Arbieto}
Let $\{X_t\}_{t \in \R}$ be a flow with a dominated splitting $T_{\Lambda}M=E\oplus E^c$ over an attracting set $\Lambda$ whose singularities contained in it are hyperbolic. The flow $\{X_t\}_{t \in \R}$ is a sectional-hyperbolic flow if and only if the Lyapunov exponents in the $E$ direction are negative and the sectional Lyapunov exponents in the $E^c$ direction are positive on a set of total probability. If the manifold has no boundary, the flow has no singularities and it is an hyperbolic flow.
\end{theorem}

\begin{proof}[proof of Lemma \ref{lorenz}]
Suppose we are under the hypothesis of the Lemma. Let $\mu$ be a regular ergodic measure for the flow of $X$, and let $B\subset M$ be the set of regular points given by Oseledets theorem. Then, it is hyperbolic by Lemma \ref{erghyp} and, in particular, by Lemma \ref{E-E^c+} we have
$$E(x)=\bigoplus_{\chi(x)<0} H_{i}(x) \text{ and } E^c(x)=\bigoplus_{\chi(x)\geq0} H_{i}(x).$$
Then, the Lyapunov exponents in $E(x)$ are negative for $x\in B$. Now, for 
$x\in B \setminus W^s(Sing(X))$, let $u,v \in E^c(x)$ and let consider $L=\langle u, v\rangle$ the plane spanned by $u$ and $v$. Denote by $A(u,v)$ the area of the parallelogram defined by the vectors $u$ and $v$. We have that
\begin{align}\label{Area}
    A(DX_t(x)u,DX_t(x)v)= & |\det DX_t(x)\mid_L| A(u,v) \\
 \nonumber    = & \sin \theta_t\cdot\Vert DX_t(x)u\Vert\cdot\Vert DX_t(x)v\Vert,
\end{align}
where $\theta_t(x)$ represents the angle between $DX_t(x)u$ and $DX_t(x)v$. Then, the asymptotic area expansion on $L$ implies 
\begin{equation*}
\limsup_{t \rightarrow \infty} \frac{1}{t} \log \Vert \det DX_{t}(x)|_{L}\Vert \geq C>0.
\end{equation*}

On the other hand, by definition of the Lyapunov exponents and (\ref{Area}),
$$\limsup_{t \rightarrow \infty} \frac{1}{t} \log \Vert\det DX_{t}(x)|_{L}\Vert= \chi(x,u)+\chi(x,v),$$
so that $\chi(x,u)+\chi(x,v) \geq C >0$. Thus, the Lyapunov exponent sectional are positive. Since $\mu(W^s(Sing(X))\setminus Sing(X))=0$ for any $\mu$ measure invariant, by the recurrence Poincare theorem we assume that $B\cap (W^s(Sing(X))\setminus Sing(X))=\emptyset$. So, we need to verify that the property of having positive sectional lyapunov exponents is satisfied in the singularities. For this, recall that since $E$ is contracting one has $E(\sigma)\subset \overline{E}^s_{\sigma}$. On the other hand, by assumption we have
$$\dim (E(\sigma))+1= \dim(\overline{E}^s_{\sigma}),$$
which implies that $E^c(\sigma)$ contains one contracting direction.
In this way, the Lyapunov exponents in $E(\sigma)$ are $\lambda_1\leq \cdots\leq \lambda_{s-1}<0$, and the Lyapunov exponent in $E^c(\sigma)$ are $\lambda_s<0<\lambda_{s+1}\leq \cdots\leq \lambda_d$. Therefore, the sectional Lyapunov exponent are $\{\lambda_i+\lambda_j \}_{s-l\leq i<j\leq d}$ which are positive because $\sigma$ is Lorenz-like.  Then, by Theorem \ref{Arbieto}, we have that $\Lambda$ is sectional-hyperbolic.
\end{proof}

The last ingredient in the proof of Theorem \ref{StarinterASH} is the connecting lemma for chains, given in \cite{shiganwen}. 

\begin{lemma}[Lemma 2.4 in \cite{shiganwen}]\label{5.2} Let $X \in \mathcal{X}^*\left(M\right)$. For any $C^1$ neighborhood $\mathcal{U}$ of $X$ and $x, y \in M^d$, if $y$ is chain attainable from $x$, then there exists $Y \in \mathcal{U}$ and $t>0$ such that $\phi_t^Y(x)=y$. Moreover, for every $k \geq 1$, let $\left\{x_{i, k}, t_{i, k}\right\}_{i=0}^{n_k}$ be $a(1 / k, T)$-chain from $x$ to $y$ and denote by
$$
\Lambda_k=\bigcup_{i=0}^{n_k-1} \phi_{\left[0, t_{i, k}\right]}\left(x_{i, k}\right) .
$$
 Let $\Lambda$ be the upper Hausdorff limit of $\Lambda_k$, i.e., $\Lambda$ consists of points $z$ such that there exist $z_k \in \Lambda_k$ and $\lim _{k \rightarrow \infty} z_k=z$. Then for any neighborhood $U$ of $\Lambda$, there exists $Y \in U$ with $Y=X$ on $M \backslash U$ and $t>0$ such that $\phi_t^Y(x)=y$.
\end{lemma}

Next, we are ready to prove Theorem  \ref{StarinterASH}.

\begin{proof}[Proof of Theorem \ref{StarinterASH}]
Let $X$ be a star $C^2$-vector field, and let $\Lambda$ be an ASH attractor for $X$. If $\Lambda$ is non-singular, the Hyperbolic lemma  shows that it is, in fact, hyperbolic of saddle type. So, $\Lambda$ is sectional-hyperbolic.

Now, suppose that the ASH attractor $\Lambda$ is singular. Then, since $\Lambda$ is star, no singularity of $\Lambda$ can be resonant according to \cite{shiganwen}*{Corollary 4.3}. Therefore, in light of Lemma \ref{lorenz}, to conclude the proof we need to show the following two assertions:
\begin{enumerate}
    \item Every singularity in $\Lambda$ is Lorenz-like
    \item $\dim (E(\sigma))+1= \dim(\overline{E}^s_{\sigma}),$ for every $\sigma\in \Lambda$
\end{enumerate}

To see that every singularity is Lorenz-like we recall that since $X$ is star, there is a neighborhood  $\mathcal{U}\subset\mathcal{X}^1(M)$ of $X$ such that every singularity or periodic orbit of $Y\in \SU$ is hyperbolic. Assume that $\Lambda$ contains a singularity $\sigma$ that is not Lorenz-like. Then, as noted in the above paragraph, $\sigma$ must be Rovella-like.

Now, by Lemma \ref{posentr} we have $h(X\vert_{\Lambda})>0$, which implies, by Theorem \ref{growth}, that there exists a periodic point $p\in \Lambda$. In addition, the Hyperbolic lemma shows that $O(p)$ is hyperbolic of saddle type. By shrinking $\SU$ if it is possible, we can assume that the continuation $\sigma_Y$ of $\sigma$, for any $Y\in \SU$,  is also Rovella-like, and the continuation $p_Y$ of $p$ is also hyperbolic of saddle type. In this way, by Lemma \ref{xinW^s}, for every $\varepsilon>0$ we can find  points $$x_s^\sigma \in W^s(\sigma), x_u^\sigma \in W^u(\sigma), x_s^p \in W^s(p), x_u^p \in W^u(p)$$ and regular orbits $\gamma_1$ and $\gamma_2$ such that $$d\left(\gamma_1,\left\{x_u^\sigma, x_s^p\right\}\right),  d\left(\gamma_2,\left\{x_u^p, x_s^\sigma\right\}\right)<\varepsilon.$$ So, by Lemma \ref{5.2}, there is a vector field $Y \in \mathcal{U}$ admitting  a singular cycle $\Gamma_Y$ associated to the continuation $\sigma_Y$  and  $p_Y$. More precisely, there are regular orbits $\gamma_1^Y \in W^u\left(\sigma_Y\right) \cap W^s\left(p_Y\right)$ and $\gamma_2^Y \in W^u\left(p_Y\right) \cap W^s\left(\sigma_Y\right)$ such that $$\Gamma_Y=\left\{\sigma_Y\right\} \cup\left\{p_Y\right\} \cup \gamma_1^Y \cup \gamma_2^Y.$$ 

Next, we claim that there is a one-parameter family of vector fields $\displaystyle\left\{Y_r\right\}_{r_0 \leq r \leq r_1}$ contained in  $\mathcal{U}$, such that $\Gamma_{r_0}$ is a periodic sink for $Y_{r_0}$ and $\Gamma_{r_1}$ is a periodic saddle for $Y_{r_1}$. Indeed, the construction of such vector fields can be found in \cite{shiganwen}*{Lemma 4.5}.
 Therefore, by the continuity of Lyapunov exponents of $\Gamma_r$ along $\{Y_r\}$, there is $r^{\prime} \in\left(r_0, r_1\right)$ such that  $\Gamma_{r^{\prime}}$ is a non-hyperbolic periodic orbit, contradicting the star property of $X$. Thus, every singularity is Lorenz-like. This proves (1).

Let us now prove $\dim (E^s(\sigma))+1= \dim(\overline{E}^s_{\sigma}),$ for every $\sigma\in \Lambda$. To begin with, we recall that, by the Hyperbolic lemma, if $K\subset M$ is compact, invariant and non-singular, then it is a hyperbolic set. In this case, we have that  $\dim(E^s(x))=\dim(\overline{E}^s_x)$, for every $x\in K$, where $\overline{E}^s_x$ is the hyperbolic contracting bundle of $x$. Indeed, we first have $E^s(x)\subset \overline{E}^s_x$, and if this inclusion is proper, $E^c(x)$ must contain a two-dimensional subset that is not asymptotically sectional-expanding. In particular, this holds for any periodic orbit of $\Lambda$. Let $U$ be an isolating neighborhood of $\Lambda$. Since $X$ is star, again by following the steps of the proof of \cite{shiganwen}*{Lemma 4.5} one can show that if $Y$ is $C^1$-close to $X$ and $p\in U$ is a periodic point for $Y$, then it is hyperbolic of saddle type with $\dim(\overline{E}^s_Y(p))=\dim(E^s)$.

Next, since every singularity in $Sing_{\Lambda}(X)$ is Lorenz-like (by (1)), the continuation of any of these singularities, for vector fields $Y\in\mathcal{U}$, is also Lorenz-like with the same stable index. Next, we proceed as in the proof of step $(1)$ to a find a $C^1$-close vector field $Y$ a singular cycle $\Gamma_Y$ and a sequence vector fields $Y_n$ and periodic orbits $p_n\in U$ for $Y_n$ such that 
$Y_n\to Y$ and the orbits of $p_n$ converges to $\Gamma_Y$ in the Hausdorff topology. Finally, by \cite{shiganwen}*{Lemma 4.4}, $$\dim(\overline{E}^s_{\sigma})=\dim(\overline{E}^s_{\sigma_Y})=\dim(\overline{E}_{Y_n}^s(p_n))+1=\dim(E^s)+1.$$  
Therefore, the property (2) holds. This concludes the proof.
\end{proof}

\section{Three-dimensional ASH attractors}\label{3d}

In this section, we prove Theorem \ref{Entrexp} and Theorem  \ref{HomClass3}.

\subsection{Entropy Expansiveness} \label{SUBEntropyExpansiveness}
The proof of Theorem \ref{Entrexp} is essentially based on three results. The first two of them are elementary facts from flow theory, and we present their proofs here for the sake of completeness.  

\begin{lemma}\label{timeone}
Let $\Phi=\{X_t\}_{t\in\mathbb{R}}$ be a continuous flow on a  compact metric space $M$. For every $\alpha>0$, there exists $\beta>0$ such that if $y\in B^{\infty}_{\beta}(x, X_1)$,  then $$d(X_t(x),X_t(y))\leq\alpha$$
\end{lemma}
\begin{proof}
Fix $\alpha>0$. Since $M$  is compact and $\Phi$ is a continuous flow, we can find $\beta>0$ such that if $d(x,y)\leq\beta$, then $d(X_t(x),X_t(y))\leq \alpha$, for every $t\in [0,1]$. Take $y\in B^{\infty}_{\beta}(x,X_1)$ and fix $t\in \mathbb{R}$. One can write $t=n_t+r_t$, with $n_t\in\mathbb{Z}$ and $0\leq r_t<1$. By hypothesis, we have $d(X_{n_t}(x),X_{n_t}(y))\leq\beta$ and so  $$d(X_{t}(x),X_{t}(y))=d(X_{n_t+r_t}(x),X_{n_t+r_t}(y))=d(X_{r_t}(X_{n_t}(x)),X_{r_t}(X_{n_t}(y)))\leq\alpha.$$
This concludes the proof.
\end{proof}

Notice that if one denotes $$B^{\infty}_{\alpha}(x,X)=\{y\in M;d(X_t(x),X_t(y))\leq\alpha, \forall t\in \mathbb{R}\},$$
then the previous lemma says  that  $B^{\infty}_{\beta}(x,X_1)\subset B^{\infty}_{\alpha}(x,X)$, if $\beta>0$ is small enough. 

\begin{lemma}\label{orbitentropy}
Let $\Phi=\lbrace X_t\rbrace_{t\in\mathbb{R}}$ be a continuous flow on a compact metric space $M$. Then, $h(X_{[-\varepsilon,\varepsilon]}(x))=0$ for every   $\varepsilon >0$ and  $x \in M$. 
\end{lemma}

\begin{proof}
First, we claim that for every $\eta>0$ there is $\varepsilon_0>0$ such that if $y\in X_{[-\varepsilon_0,\varepsilon_0]}(x)$, then $d(X_t(x),X_t(y))\leq \eta$, for any $t\in \mathbb{R}$. Indeed, otherwise there is $\eta>0$ and sequences $x_n\in M$, $t_n\in\mathbb{R}$ and $s_n\in\mathbb{R}$ with $s_n \to 0$ such that
\begin{equation}\label{sepa}
    d(X_{t_n}(x_n),X_{t_n+s_n}(x_n))>\eta,\quad\forall n\geq 1.
\end{equation}
By compactness of $M$ and the continuity of the flow, there is $s>0$ such that $d(X_t(x),x)<\eta$ for any $x\in M$ and $\vert t\vert<s$. So, by \eqref{sepa} we obtain 
\begin{displaymath}
\eta>d(X_{s_n}(X_{t_n}(x_n)),X_{t_n}(x_n)))=d(X_{s_n+t_n}(x_n),X_{t_n}(x_n))\geq \eta
\end{displaymath}
for $n$ large enough, which leads us to a contradiction.

Now, we fix $\eta>0$. By the above claim, there is $\varepsilon_0>0$ such that 
\begin{displaymath}
d(X_t(x),X_t(y))\leq \eta,\quad \forall t\in\mathbb{R}, 
\end{displaymath}
for every $x\in M$ and $y\in X_{[-\varepsilon_0,\varepsilon_0]}(x)$. So, if $\varepsilon_0>\varepsilon$, we have $S(t,\eta)=1$, where $S(t,\eta)$ denotes the minimum cardinality of a $(t,\eta)$-spanning set, whereas if $\varepsilon_0<\varepsilon$, there is $N\in\mathbb{N}$ such that $N\varepsilon_0>\varepsilon$, so that $S(t,\eta)\leq N$. Then, we have the desired result by the definition of topological entropy. 
\end{proof}

In light of the above lemmas, the proof of Theorem \ref{Entrexp} is reduced to obtain the following result:

\begin{theorem}\label{k-exp}
Every asymptotically sectional-hyperbolic attractor $\Lambda$ associated to $C^1$ vector field $X$ on a three-dimensional manifold $M$ is \textit{kinematic expansive}, i.e, for every $\varepsilon>0$, there is $\delta>0$ such that if $x,y\in \Lambda$ satisfy 
$$d(X_t(x),X_t(y))\leq \delta\quad \forall t\in \mathbb{R},$$ then $y\in X_{[-\varepsilon,\varepsilon]}(x)$.
\end{theorem}

\begin{remark}
Note that kinematic expansiveness is a weaker form of expansiveness since it does not care about reparametrizations. For a more detailed discussion about the properties of kinematic expansive flows, we refer the reader to the work \cite{Ar1}.
\end{remark}
We explain why these results imply Theorem \ref{Entrexp}.

\begin{proof}[Proof of  Theorem \ref{Entrexp}.]
Fix $\eta>0$. By Theorem \ref{k-exp} there is $\varepsilon>0$ such that $B_{\varepsilon}^{\infty}(x)\subset X_{[-\eta,\eta]}(x)$ for any $x\in M$. Let $\beta>0$ be as in Lemma \ref{timeone} with respect to $\varepsilon$. Then, for every $x\in M$ one has $$B_{\beta}^{\infty}(x,X_1)\subset B_{\varepsilon}^{\infty}(x,X)\subset X_{[-\eta,\eta]}(x),$$ and the proof follows from Lemma \ref{orbitentropy}. 
\end{proof}

From now on, we will devote ourselves to obtaining a proof of Theorem \ref{k-exp}. Many parts of the arguments presented here resemble that of the proof of Theorem 2.5 in \cite{RV}. First, we state some known results that will be used in the proof. 

Let $\sigma$ be a singularity of $\Lambda$. We say that $\sigma$ is 
\begin{itemize}
    \item \textit{attached} to $\Lambda$ if it is accumulated by regular orbits in $\Lambda$, 
    \item \textit{real index two} if it has three real eigenvalues satisfying  $\lambda_{ss}<\lambda_s<0<\lambda_u$.
\end{itemize}
For a real index-two singularity $\sigma$, we say that it is \textit{Lorenz-like} if its eigenvalues satisfy the relation $\lambda_u+\lambda_s>0$, called \textit{central expanding condition}. In \cite{B}, it was shown that the singularities contained in a connected singular-hyperbolic set must be Lorenz-like. This is not true for ASH sets in general (see \cite{SMV} for instance). For ASH sets, we state the following result, whose proof is analogous to that of Theorem A in \cite{Mo3}:
\begin{theorem}\label{T1}
Let $\Lambda$ be a nontrivial asymptotically sectional-hyperbolic set of $X$ and assume that $\Lambda$ is not hyperbolic. Then, $\Lambda$ has at least one attached singularity. In addition, if $\Lambda$ is transitive, the following holds for $X$: Each singularity $\sigma$ of $\Lambda$ is real of index two and satisfies 
\begin{equation}\label{C}
\Lambda\cap W^{ss}(\sigma)=\lbrace\sigma\rbrace,
\end{equation}
where $W^{ss}(\sigma)$ denotes the strong stable manifold associated with $\lambda_{ss}$.
\end{theorem}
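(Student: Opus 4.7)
The plan is to follow the strategy of Morales--Pac\'ifico--Pujals \cite{Mo3}, adapting it so that their use of uniform sectional expansion is replaced by asymptotic expansion along hyperbolic times as provided by the ASH condition. First I would establish the existence of an attached singularity by a dichotomy argument. Suppose, for contradiction, that either $\Lambda$ contains no singularity, or every singularity in $\Lambda$ is isolated, i.e., not accumulated by regular orbits of $\Lambda$. In both cases the regular part $\Lambda^\ast := \Lambda \setminus Sing(X)$ is compact and invariant and satisfies $\Lambda^\ast \cap W^s(Sing(X)) = \emptyset$, so the ASH condition applies at every point of $\Lambda^\ast$. On $\Lambda^\ast$ the norm of $X$ is bounded above and below, hence the expansion along $\mathbb{R}X(x) \subset E^c_x$ is uniformly bounded in time; therefore the asymptotic area expansion on $E^c$ transfers, via the quotient $E^c/\mathbb{R}X$, to asymptotic expansion on a transverse one-dimensional direction inside $E^c$. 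Combined with the uniform contraction of $E^s$, domination, and a Pliss-type argument on the compact set $\Lambda^\ast$, this is upgraded to uniform expansion, yielding hyperbolicity of $\Lambda^\ast$ and hence of $\Lambda$, contradicting the assumption.

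Next I would address the transitive case. Fix $\sigma \in Sing(X) \cap \Lambda$; since $\sigma$ is attached, pick $x_n \in \Lambda \setminus W^s(Sing(X))$ with $x_n \to \sigma$. Denote by $\lambda_{ss}$ the eigenvalue of $DX(\sigma)$ on $E^s_\sigma$ and by $\mu_1, \mu_2$ the two eigenvalues of $DX(\sigma)|_{E^c_\sigma}$, a priori possibly complex conjugate. Applying the ASH inequality at each $x_n$ along a $C$-hyperbolic time $t_n \to +\infty$ and using continuity of the bundles and of $DX_t$ to pass to the limit, I would obtain $\mathrm{Re}(\mu_1) + \mathrm{Re}(\mu_2) \geq C > 0$. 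To rule out non-real eigenvalues, I would argue that complex conjugates produce a spiral linearization in $E^c_\sigma$; combined with transitivity (which forces $W^u(\sigma)$ to be contained in $\Lambda$) and the invariance of the central cone field $C_a^c$ on $U_0$, the rotational dynamics near $\sigma$ become incompatible with the accumulation structure of $\Lambda$ on $\sigma$, yielding a contradiction. Hence $\mu_1, \mu_2 \in \mathbb{R}$, and domination $\lambda_{ss} < \min(\mu_1, \mu_2)$, together with hyperbolicity of $\sigma$ and $\mu_1 + \mu_2 \geq C$, forces $\lambda_{ss} < \mu_1 < 0 < \mu_2$, so $\sigma$ is real of index two.

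Finally, for condition \eqref{C}, suppose $y \in (\Lambda \cap W^{ss}(\sigma)) \setminus \{\sigma\}$. Then $y$ is regular and $X_t(y) \to \sigma$ tangentially to $E^{ss}_\sigma = E^s_\sigma$, so the normalized flow direction $X(X_t(y))/\|X(X_t(y))\|$ converges to a unit vector in $E^s_\sigma$. However, for large $t$ the orbit lies in $U_0$, where cone invariance gives $X(X_t(y)) \in C_a^c(X_t(y))$; by continuity the limiting direction must lie in $C_a^c(\sigma)$, which is arbitrarily close to $E^c_\sigma$ when $a$ is small and hence transverse to $E^s_\sigma$. This contradicts $y \in W^{ss}(\sigma) \setminus \{\sigma\}$. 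The main obstacle in this plan is ruling out complex eigenvalues at $\sigma$: the original singular-hyperbolic argument in \cite{Mo3} leans on uniform sectional expansion in a decisive way, and in the ASH setting one must replace uniform bounds by asymptotic ones along hyperbolic times, which makes the interplay between cone invariance and spiral dynamics more delicate to set up rigorously.
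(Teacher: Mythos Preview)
Your overall plan---to follow Morales--Pac\'ifico--Pujals \cite{Mo3}---is exactly what the paper does: the theorem is stated and its proof is declared ``analogous to Theorem~A in \cite{Mo3}'', with no further details given. Your arguments for the existence of an attached singularity and for $\Lambda\cap W^{ss}(\sigma)=\{\sigma\}$ are the right adaptations; for the first part one can in fact simply invoke the Hyperbolic Lemma of \cite{SMV2} cited in the paper, which says that any compact invariant subset of $\Lambda\setminus W^s(Sing(X))$ is hyperbolic.

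There is, however, a genuine error in your treatment of the eigenvalues $\mu_1,\mu_2$ of $DX(\sigma)|_{E^c_\sigma}$. You assert that applying the ASH inequality at points $x_n\to\sigma$ along hyperbolic times $t_n\to\infty$ and ``passing to the limit'' yields $\mathrm{Re}(\mu_1)+\mathrm{Re}(\mu_2)\ge C>0$. This step does not work, and its conclusion is false. The bound $|\det DX_{t_n}(x_n)|_{E^c}|\ge e^{Ct_n}$ concerns the long-term orbit of $x_n$, which may spend an arbitrarily small portion of $[0,t_n]$ near $\sigma$; nothing about the local rate at $\sigma$ survives the limit. Indeed, immediately after the theorem the paper remarks that singularities of a transitive ASH set can be Rovella-like ($\lambda_s+\lambda_u<0$) or resonant ($\lambda_s+\lambda_u=0$), so the inequality you claim is simply not true in this setting. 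This is precisely where ASH departs from singular-hyperbolicity: in \cite{Mo3} the bound $\mu_1+\mu_2\ge\lambda>0$ is immediate by evaluating the uniform volume expansion at $x=\sigma$, but the ASH hypothesis says nothing at $\sigma$. Note also that, even granting $\mu_1+\mu_2\ge C$, the implication to $\mu_1<0<\mu_2$ does not follow from what you wrote; nothing listed excludes $0<\mu_1\le\mu_2$.

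The correct route to ``real of index two'' uses no lower bound on $\mu_1+\mu_2$. One shows: (i) not both $\mathrm{Re}(\mu_i)<0$, since otherwise $\sigma$ is a sink and $\Lambda=\{\sigma\}$; and (ii) not both $\mathrm{Re}(\mu_i)>0$, since then $W^s(\sigma)$ is one-dimensional and tangent to $E^s_\sigma$, and accumulation of the dense orbit on $\sigma$ forces $\Lambda\cap(W^s(\sigma)\setminus\{\sigma\})\neq\emptyset$, to which your own flow-direction argument for $W^{ss}$ applies verbatim to give a contradiction. Once one eigenvalue has positive and the other negative real part they cannot be complex conjugates, so both are real; thus what you call the ``main obstacle'' actually dissolves as soon as (ii) is handled. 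These are precisely the pieces of the \cite{Mo3} argument that do not rely on uniform volume expansion and carry over unchanged to the ASH setting.
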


Now, let $E^s\oplus E^c$ be the partially hyperbolic splitting associated with the asymptotically sectional-hyperbolic attractor $\Lambda$, and consider a continuous extension $\widetilde{E}^s\oplus \widetilde{E}^c$ to the trapping region $U_0$. According to Proposition 3.2 in \cite{AM}, the subbundle $\widetilde{E}^s$ can be chosen $DX_t$-invariant for positive $t$. Nevertheless, the subbundle $\widetilde{E}^c$ is not invariant in general, but we can consider a cone field $C_a^c$ of size $a>0$ around $\widetilde{E}^c$ on $U_0$ defined by 
\begin{displaymath}
C_a^c(x):=\lbrace v=v_s+v_c : v_s\in \widetilde{E}^s, v_c\in \widetilde{E}^c\text{ and } \Vert v_s\Vert\leq a\Vert v_c\Vert\rbrace,\quad \forall x\in U_0,
\end{displaymath}
which is invariant for $t>0$ large enough, i.e., there is $T_0>0$ such that 
\begin{displaymath}
DX_t(x)C_a^c(x)\subset C_a^c(X_t(x)),\quad\forall t\geq T_0,\forall x\in U_0. 
\end{displaymath}
\begin{remark}
By possibly shrinking the neighborhood $U_0$, the number $a>0$ can be taken arbitrarily small.
\end{remark}
To simplify the notation, we write $E^s$ and $E^c$ for $\widetilde{E}^s$ and $\widetilde{E}^c$ respectively in what follows.

Now, we consider a special kind of neighborhood of the singularities contained in an ASH attractor. Let $\Lambda$ be an ASH attractor on a three-dimensional manifold $M$. Let $x\in\Lambda$ and $\Sigma'$ be a cross-section to $X$ containing $x$ in its interior. Define $W^s(x,\Sigma')$ as the connected component of $W^s(x)\cap\Sigma'$. This gives us a foliation $\mathcal{F}_{\Sigma'}$ of $\Sigma'$. We can construct a smaller cross section $\Sigma$, which is the image of a diffeomorphism $h:[-1,1]\times[-1,1]\to \Sigma'$, that sends vertical lines inside $\mathcal{F}_{\Sigma'}$ in a such way that $x$ belongs to the interior of $h([-1,1]\times[-1,1])$. In this case, the \textit{s-boundary} $\partial^s\Sigma$ and \textit{cu-boundary} $\partial^{cu}\Sigma$ of $\Sigma$ are defined by 
\begin{displaymath}
\partial^s\Sigma=h(\lbrace -1,1\rbrace\times[-1,1])\quad\text{ and }\quad\partial^{cu}\Sigma=h([-1,1]\times\lbrace-1,1\rbrace)
\end{displaymath}
respectively. We say that $\Sigma$ is $\eta$-\textit{adapted} if 
\begin{displaymath}
d(\Lambda\cap\Sigma,\partial^{cu}\Sigma)>\eta. 
\end{displaymath}

A consequence of the Hyperbolic lemma (see \cite{SMV2}) is the following: 
\begin{prop}
Let $\Lambda$ be an asymptotically sectional-hyperbolic attractor, and let $x\in\Lambda$ be a regular point. Then,
there exists a $\eta_0$-adapted cross-section $\Sigma$ at $x$ for some $\eta_0>0$. 
\end{prop}

\begin{remark}
From any  $x\in M$ and any cross-section containing $x$ in its interior, one can obtain an  $\eta_0$-adapted cross-section containing $x$ in its interior.
\end{remark}

Next, we recall the construction performed in \cite{SYY} of partitions for singular cross sections. These partitions give us a detailed picture of the flow dynamics inside small neighborhoods of the singularities of $\Lambda$.   According to that reference we can find  $\beta_1>0$ such that:

\begin{enumerate}[(a)]
    \item $B_{\beta_1}(\sigma)\cap B_{\beta_1}(\sigma')=\emptyset$, where $B_r(a)$ denotes the open ball centered in $a$ and radius $r>0$ and $\sigma,\sigma'\in Sing_{\Lambda}(X)$.
    \item The map $exp_{\sigma}$ is well defined on $\lbrace v\in TM_{\sigma} : \Vert v\Vert\leq\beta_1\rbrace$ for every $\sigma\in Sing_{\Lambda}(X)$. 
    \item There are $L_0, L_1>0$ such that 
    \begin{displaymath}
    L_0\leq\frac{\Vert X(x)\Vert}{d(x,\sigma)}\leq L_1,\quad\forall x\in \overline{B_{\beta_1}(\sigma)},\quad\forall \sigma\in Sing_{\Lambda}(X). 
    \end{displaymath}
    \item The flow in $B_{\beta_1}(\sigma)$ is a small $C^1$ perturbation of the linear flow. 
\end{enumerate}
For every $\sigma\in Sing_{\Lambda}(X)$, define the \textit{singular cross-section}
\begin{displaymath}
D_{\sigma}=exp_{\sigma}(\lbrace v=(v^s,v^u)\in TM_{\sigma} : \Vert v\Vert\leq\beta_1, \Vert v^s\Vert=\Vert v^u\Vert \rbrace)\subset M,
\end{displaymath}
and the following partition of $D_{\sigma}$
\begin{displaymath}
D_n=D_{\sigma}\cap(B_{e^{-n}}(\sigma)\setminus B_{e^{-(n+1)}}(\sigma)),\quad\forall n\geq n_0,
\end{displaymath}
where $n_0$ is large enough. 
As noticed by the authors in \cite{SYY}, the partition $\{D_n\}_{n\geq n_0}$ induces  a partition of the  cross sections $\Sigma_{\sigma}^{i,o,\pm}$ given in \cite{AP}. More precisely, assume that $\Sigma_{\sigma}^{i,o,\pm}\subset\partial B_{\beta_1}(\sigma)$. Consider 
\begin{displaymath}
D_n^o=\bigcup_{x\in D_n}X_{t_x^+}(x), \quad D_n^i=\bigcup_{x\in D_n}X_{-t_x^-}(x),\quad\forall n\geq n_0,
\end{displaymath}
where 
\begin{displaymath}
t_x^+=\inf\lbrace\tau>0 : X_{\tau}(x)\in \Sigma_{\sigma}^{o,\pm} \rbrace,
\end{displaymath}
and
\begin{displaymath}
t_x^-=\inf\lbrace\tau>0 : X_{-\tau}(x)\in \Sigma_{\sigma}^{i,\pm} \rbrace.
\end{displaymath}
Note that the sets $\lbrace D_n^i\cap\Sigma_{\sigma}^{i,\pm}\rbrace_{n\geq n_0}$, illustrated in the Figure 1, form a partition of $\Sigma_{\sigma}^{i,\pm}$ for which $X_{\tau(x)}(x)\in D_n^o\cap\Sigma_{\sigma}^{o,\pm}$ for every  $x\in D_n^i\cap\Sigma_{\sigma}^{i,\pm}$ and every $n\geq n_0$, where $\tau(\cdot)$ is the flight time to go from $\Sigma_{\sigma}^{i,\pm}$ to $\Sigma_{\sigma}^{o,\pm}$. Moreover, it is  shown that this flight time satisfies 
\begin{equation}\label{vuelo}
\tau(x)\approx \left(\frac{\lambda_u+1}{\lambda_u}\right)\cdot n,\quad \forall x\in D_n^i\cap\Sigma_{\sigma}^{i,\pm},\quad\forall n\geq n_0.
\end{equation}

\begin{figure}[ht]
\includegraphics[scale=0.4]{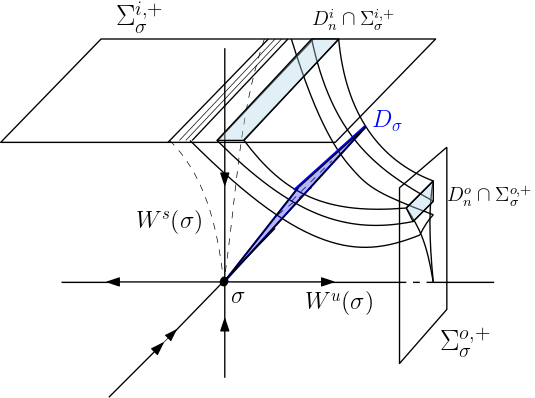}
\caption{The sets $D^{i,o}_n\cap\Sigma_{\sigma}^{i,o,\pm}$.}
\end{figure}

In this case, let $\widetilde{\Sigma_{\sigma}^{i,\pm}}=\left(\left(\bigcup_{n\geq n_0}D_n^i\cap\Sigma_{\sigma}^{i,\pm}\right)\right)\cup \ell_{\pm}$, where $\ell_{\pm}$ is contained in $ W^s_{loc}(\sigma)\cap\Sigma_{\sigma}^{i,\pm}$, and let consider
\begin{equation}\label{vecindad}
V_{\sigma}=\bigcup_{z\in\widetilde{\Sigma_{\sigma}^{i,\pm}}\setminus\ell_{\pm}}X_{(0,\tau(z))}(z)\cup (-e^{-n_0},e^{-n_0})\times(-e^{-n_0},e^{-n_0})\times(-1,1)\subset U. 
\end{equation}

Denote $\widetilde{\Sigma^{i,o,\pm}}=\bigcup_{\sigma\in Sing_{\Lambda}(X)}\widetilde{\Sigma_{\sigma}^{i,o,\pm}}$ and $V=\bigcup_{\sigma\in Sing_{\Lambda}(X)}V_{\sigma}$.

\begin{remark}\label{rmk}
We have the following remarks: 
\begin{itemize}
    \item We can assume without loss of generality that every cross section in $\widetilde{\Sigma^{i,o,\pm}}$ is $\eta_0$ adapted for some $\eta_0>0$. 
    \item The above construction can be made by taking $\widetilde{\beta}<\beta_1$. In this case, we denote
    \begin{displaymath}
    \widetilde{\Sigma_{\widetilde{\beta}}^{i,o,\pm}}=\bigcup_{\sigma\in Sing_{\Lambda}(X)}\widetilde{\Sigma_{\sigma,{\widetilde{\beta}}}^{i,o,\pm}}\quad\text{and}\quad V_{\widetilde{\beta}}=\bigcup_{\sigma\in Sing_{\Lambda}(X)}V_{\sigma,{\widetilde{\beta}}}
    \end{displaymath}
\end{itemize}
\end{remark}

Before continuing with the proof, we will take a little break to briefly outline the following lemmas to make our argument clearer. Our main goal is to obtain some sort of hyperbolicity for Poincar\'{e} maps from the ASH property. In \cite{AP}, this property is obtained from the uniform area expansion property of the sectional hyperbolicity. Nevertheless, we can only see area expansion on the center bundle for ASH attractors during hyperbolic times. 

In what follows, we prove that if two points satisfy the shadowing condition given in the definition of kinematic expansiveness, they share the same hyperbolic times. To do this, we need some previous lemmas to help us control the hyperbolic times of a pair of close points, depending on where they are located in the phase space. More precisely, Lemma \ref{W_0} controls the hyperbolic times of points away from singularities, Lemma \ref{W_1} controls the hyperbolic times of points before entering a neighborhood of a singularity and  Lemma \ref{lema31} helps us to control the hyperbolic times of points crossing a neighborhood of a singularity.

First, denote $\Lambda_+=\bigcap_{t\geq 0}X_{-t}(\Lambda\setminus V)$. Then, we have the following lemma.

\begin{lemma}[\textit{Lemma 3.2 in}   \cite{RV}] \label{W_0}
Given $\varepsilon_0>0$, there are positive numbers $\delta_1(\varepsilon_0), T_0$ and $K_2=K_2(\varepsilon_0)\approx 1$, and a neighborhood $W_0$ of $\Lambda_+$ such that for any $x,y\in W_0$ with $d(x,y)<\delta_1(\varepsilon_0)$, then 
\begin{equation}\label{estimateW}
    \frac{\vert\det DX_{t_1}(y)\vert_{E^c_y}\vert}{\vert\det DX_{t_1}(x)\vert_{E^c_x}\vert}\geq K_2,\quad 0<t_1\leq T_0,  
\end{equation}

where $t_1\leq T_0$ is a first hyperbolic time for $x$ and  $y$. 
\end{lemma}

Let us consider the following compact set $\Lambda''=\Lambda\setminus(V\cup W_0)$, where $W_0$ is as the above lemma.  
\begin{lemma}[\textit{Lemma 3.3 in}  \cite{RV}] \label{W_1}
Given $\varepsilon_1>0$, there are positive numbers $\delta_2(\varepsilon_1),T_1$ and $K_3=K_3(\varepsilon_1)\approx 1$, and a neighborhood $W_1$ of $\Lambda''$ such that for every $x,y\in W_1$, with $d(x,y)\leq \delta_2(\varepsilon_1)$, there is $0<s\leq T_1$ such that $X_s(x),X_s(y)\in V$ and  
\begin{equation}\label{estimateoutside}
    \frac{\vert\det DX_s(y)\vert_{E^c_y}\vert}{\vert\det DX_s(x)\vert_{E^c_x}\vert}\geq K_3.
\end{equation}
\end{lemma}

We need, in addition, the following result

\begin{lemma}[\textit{Lemma 3.4 in } \cite{RV}]\label{conj}
Let $\widetilde{U}=\overline{U\setminus V}$, where $U$ is the basin of attraction of $\Lambda$. There are $\beta'>0$ and $\varepsilon_2>0$ such that if $x\in \widetilde{U}$, and $y,z\in B_{\varepsilon_2}(x)$, $z\in\mathcal{O}(y)$, then $z=X_u(y)$, $\vert u\vert<2\beta'$. 
\end{lemma}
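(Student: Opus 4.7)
The plan is to establish this as a uniform tubular (flow-box) structure statement on the compact, regular set $\widetilde{U}$. Since $V$ is an open neighborhood of $Sing_{\Lambda}(X)$, the set $\widetilde{U} = \overline{U \setminus V}$ is compact and disjoint from $Sing(X)$, so by continuity and compactness there is a uniform lower bound $c = \inf_{w \in \widetilde{U}} \Vert X(w) \Vert > 0$ on the field magnitude. This ensures that every $x \in \widetilde{U}$ is regular and admits a local flow-box construction.

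At each $x \in \widetilde{U}$ the tubular flow theorem supplies a local cross-section $\Sigma_x$ transverse to $X$ at $x$ and a positive time $\beta'_x$ such that $\Phi_x : (-\beta'_x, \beta'_x) \times \Sigma_x \to M$, $(t, w) \mapsto X_t(w)$, is a $C^1$ diffeomorphism onto an open neighborhood $U_x$ of $x$. By compactness of $\widetilde{U}$ I extract a finite subcover $\{U_{x_i}\}_{i=1}^k$ and set $\beta' := \frac{1}{2}\min_i \beta'_{x_i}$. A Lebesgue number argument then produces $\varepsilon_2 > 0$ such that every ball $B_{\varepsilon_2}(x)$ with $x \in \widetilde{U}$ is contained in some flow box $U_{x_i}$.

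For $x \in \widetilde{U}$ and $y, z \in B_{\varepsilon_2}(x)$ with $z \in \mathcal{O}(y)$, both points then lie in a common flow box $U_{x_i}$. Writing $y = \Phi_{x_i}(s_y, w_y)$ and $z = \Phi_{x_i}(s_z, w_z)$ with $s_y, s_z \in (-\beta'_{x_i}, \beta'_{x_i})$, the conclusion $z = X_u(y)$ with $|u| < 2\beta'$ follows immediately from $w_y = w_z$ by taking $u = s_z - s_y$, so that $|u| < 2\beta'_{x_i} \leq 2\beta'$.

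The main obstacle is justifying $w_y = w_z$, i.e., ruling out the scenario in which the orbit of $y$ leaves the flow box $U_{x_i}$, wanders through $M$, and re-enters at a different transversal point before meeting $z$. I would handle this by exploiting the fact that any two distinct local arcs of $U_{x_i}$, indexed by $w_1, w_2 \in \Sigma_{x_i}$ with $w_1 \neq w_2$, stay at positive ambient distance, controlled from below by $|w_1 - w_2|$ through the diffeomorphism $\Phi_{x_i}$ and the uniform bound $\Vert X \Vert \geq c$ on $\widetilde{U}$. A further compactness argument across the finitely many flow boxes allows me to shrink $\varepsilon_2$ uniformly so that $B_{\varepsilon_2}(x)$ meets each orbit in a single connected local arc of its flow box, forcing $w_y = w_z$. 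This is the step where the absence of singularities in $\widetilde{U}$ is essential, since otherwise flow boxes would collapse near $Sing(X)$ and no uniform $\varepsilon_2$ could be obtained.
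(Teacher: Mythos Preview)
The paper gives no proof of this statement; it simply records it as Lemma~3.4 of \cite{RV}, so there is no in-paper argument to compare against directly.

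Regarding your argument, there are two issues. First, a minor bookkeeping slip: with $\beta' := \tfrac{1}{2}\min_i \beta'_{x_i}$ you get $2\beta' = \min_i \beta'_{x_i} \le \beta'_{x_i}$, so the chain ``$|u| < 2\beta'_{x_i} \le 2\beta'$'' runs in the wrong direction. Taking instead $\beta' := \max_i \beta'_{x_i}$ (or simply recording the bound $|u| < 2\beta'_{x_i}$ and letting $\beta'$ be that maximum) repairs this.

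Second, and this is the real problem, the final paragraph does not close the gap you yourself flag as the main obstacle. The ``further compactness argument'' you invoke cannot force $B_{\varepsilon_2}(x)$ to meet each orbit in a single connected arc. The lower bound you cite on the distance between the local arcs through $w_1$ and $w_2$ is proportional to $|w_1 - w_2|$, and nothing in your setup prevents the orbit of $y$ from re-entering the flow box at a transversal point $w_z$ with $|w_z - w_y|$ arbitrarily small. Concretely, since $\Lambda$ is a transitive attractor there is a dense regular orbit passing through $\widetilde U$; along such an orbit one finds, for every $\varepsilon_2>0$, a non-periodic point $y$ and a return $z=X_T(y)$ in the same $\varepsilon_2$-ball with $T$ arbitrarily large, and for these there is no $u$ with $|u|$ small satisfying $X_u(y)=z$. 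Thus no uniform shrinking of $\varepsilon_2$ coming from compactness of $\widetilde U$ alone rules out this scenario. The flow-box skeleton in your first three paragraphs is correct and standard, but the tubular flow theorem plus compactness is not enough to obtain the conclusion as stated; whatever extra ingredient handles long-time re-entries must come from the specific formulation or hypotheses in \cite{RV}, and your proposal does not supply it.
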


\begin{lemma}[\textit{Lemma 3.5 in } \cite{RV}]\label{rL}
There exists $\delta_3 >0$ with the following property: For $x\in \Sigma_{\widetilde{\beta}}^{i,o,\pm}$, $\sigma \in Sing_{\Lambda}(X)$ and $z\in M$ with $d(x,z)<\delta_3$ exist $l \in \mathbb{R}$, with $|l|<L$, such that $X_l(z)\in\Sigma_{\widetilde{\beta}}^{i,\pm}$, where $L= \frac{\lambda_u+1}{\lambda_u}$.
 \end{lemma}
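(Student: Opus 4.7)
The plan is to prove the lemma with a flow-box / implicit function theorem argument, made uniform by compactness of the cross sections, with the quantitative bound $L$ coming from the flight-time estimate \eqref{vuelo}.

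First, I would fix any $x \in \Sigma^{i,o,\pm}_{\widetilde{\beta}}$. By construction these cross sections are transverse to $X$, so $\langle X(x),\nu(x)\rangle\neq 0$ for a smooth unit normal $\nu$ to the section at $x$. Standard flow-box analysis (equivalently, the implicit function theorem applied to the transversal coordinate of $X_t(z)$) produces a neighborhood $U_x \ni x$ in $M$ and a continuous hitting-time function $\tau_x : U_x \to \mathbb{R}$ with $\tau_x(x)=0$ such that $X_{\tau_x(z)}(z)$ lies on the extended cross section through $x$ for every $z \in U_x$, with $|\tau_x(z)|$ arbitrarily small as $d(x,z)\to 0$. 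When $x \in \Sigma^{i,\pm}_{\widetilde{\beta}}$ the conclusion follows at once by taking $l=\tau_x(z)$.

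When instead $x \in \Sigma^{o,\pm}_{\widetilde{\beta}}$, I would shift $x$ backwards along its orbit through $V_\sigma$ until reaching the corresponding incoming cross section $\Sigma^{i,\pm}_{\widetilde{\beta}}$. By the flight-time estimate \eqref{vuelo}, this backward time is dominated by $L=(\lambda_u+1)/\lambda_u$ at the partition level selected by the choice of $\widetilde{\beta}$. Continuous dependence of the flow on initial conditions then transfers this to any $z$ with $d(x,z)<\delta_3$: composing the short transversal hitting correction with the backward flow of length at most $L-o(1)$ lands $z$ on $\Sigma^{i,\pm}_{\widetilde{\beta}}$ in total signed time $l$ of magnitude strictly less than $L$.

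Finally, uniformity of $\delta_3$ over all $x$ follows from compactness: $\Sigma^{i,o,\pm}_{\widetilde{\beta}}$ is a finite union of compact discs (one pair per singularity of $\Lambda$), so the local flow-box radii $U_x$ admit a uniform positive lower bound, and the hitting time depends continuously on the base point $x$. The main obstacle I anticipate is pinning the bound to exactly $L=(\lambda_u+1)/\lambda_u$ rather than some fixed multiple of it: this requires careful use of \eqref{vuelo} at the precise partition level determined by $\widetilde{\beta}$ together with control of the continuity corrections arising from the $C^1$ perturbation from the linear flow near $\sigma$. Once this bookkeeping is in place, the rest of the argument is routine transversality.
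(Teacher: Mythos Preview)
The paper does not give its own proof of this lemma; it is simply quoted from \cite{RV}. Your flow-box argument for the case $x\in\Sigma^{i,\pm}_{\widetilde\beta}$ is correct and standard. However, your treatment of the case $x\in\Sigma^{o,\pm}_{\widetilde\beta}$ contains a genuine error: you have misread estimate \eqref{vuelo}. That estimate says $\tau(x)\approx L\cdot n$ for $x\in D_n^i\cap\Sigma^{i,\pm}_\sigma$, where $L=(\lambda_u+1)/\lambda_u$ is the \emph{proportionality constant} and $n\geq n_0$ is the partition index. Consequently the backward flight time from $\Sigma^{o,\pm}_{\widetilde\beta}$ to $\Sigma^{i,\pm}_{\widetilde\beta}$ through $V_\sigma$ is at least of order $Ln_0\gg L$, and in fact is unbounded over $\Sigma^{o,\pm}_{\widetilde\beta}$. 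Your plan of ``shifting $x$ backwards along its orbit through $V_\sigma$'' therefore cannot possibly yield $|l|<L$, no matter how carefully the bookkeeping is done.

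This strongly suggests that the target $\Sigma^{i,\pm}_{\widetilde\beta}$ in the conclusion is a typo for $\Sigma^{i,o,\pm}_{\widetilde\beta}$ (one projects $z$ onto whichever section $x$ lies on). This reading is confirmed by how the lemma is actually used in the proof of Lemma~\ref{jac}: it is applied once with $x_n\in\widetilde{\Sigma^{i,\pm}_{\widetilde\beta}}$ to land $y_n$ on $\widetilde{\Sigma^{i,\pm}_{\widetilde\beta}}$, and once with $X_{\tau(x_n)}(x_n)\in\widetilde{\Sigma^{o,\pm}_{\widetilde\beta}}$ to land $X_{\tau(x_n)}(y_n)$ on $\widetilde{\Sigma^{o,\pm}_{\widetilde\beta}}$. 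Under the corrected statement the proof reduces entirely to your flow-box/compactness argument, with $\delta_3$ chosen small enough that the transversal hitting time is uniformly below the fixed constant $L$; the specific value $(\lambda_u+1)/\lambda_u$ then plays no structural role beyond being the uniform bound that reappears in \eqref{L}.
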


Next lemma is contained in the proof of Theorem 2.5 in \cite{RV}. For the reader convenience, we state it here separately and provide a proof.

\begin{lemma} \label{lema31} 
Let $\widetilde{\varepsilon}_0>0$. There are a positive number $\delta_0$, independent of $\widetilde{\varepsilon}_0$, and a positive number $\widetilde{\beta}$ such that if $x,y\in\widetilde{\Sigma_{\widetilde{\beta}}^{i,\pm}}$ satisfy $$d(X_t(x),X_t(y))\leq\delta_0 $$ for every $t\in\mathbb{R}$,  then 
\begin{equation}\label{estimneighborhood}
    \frac{\vert \det DX_{\tau(y)}(y)\vert_{E^c_y}\vert}{\vert \det DX_{\tau(x)}(x)\vert_{E^c_x}\vert}\geq K_1C_0^{\tau(x)}, 
\end{equation}
where $C_0=C_0(\widetilde{\varepsilon}_0)$  and $K_1$ is a fixed positive constant. 
\end{lemma}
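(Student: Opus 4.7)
The plan is to leverage the near-linear behavior of the flow in $B_{\widetilde{\beta}}(\sigma)$ guaranteed by property (d) of the cross-section construction. First I would choose $\widetilde{\beta}\leq \beta_1$ small enough that for every forward orbit segment $\{X_s(z):0\leq s\leq t\}$ entirely contained in $V_{\widetilde{\beta}}$ the central determinant cocycle satisfies
$$\bigl|\det DX_t(z)|_{E^c_z}\bigr| = e^{(\lambda_s+\lambda_u)t}\cdot e^{\xi(z,t)}, \qquad |\xi(z,t)|\leq \widetilde{\varepsilon}_0\, t.$$
This follows by integrating the infinitesimal divergence of $X$ on $E^c$ along the orbit and controlling the nonlinear correction by $\widetilde{\varepsilon}_0$, since $E^c$ is a continuous extension of the invariant plane $E^s_\sigma\oplus E^u_\sigma$ at the singularity, on which the linear part of $DX_t(\sigma)$ has determinant exactly $e^{(\lambda_s+\lambda_u)t}$.

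Given $x,y\in \widetilde{\Sigma^{i,\pm}_{\widetilde{\beta}}}$ with $d(X_t(x),X_t(y))\leq \delta_0$ for all $t\in \mathbb{R}$, I would then combine Lemma \ref{rL} with the explicit partition $\{D_n^i\cap \Sigma^{i,\pm}_\sigma\}_{n\geq n_0}$ to locate both points. Because the partition index $n$ corresponds, up to a bounded constant, to the logarithm of the distance to $\sigma$, shrinking $\delta_0$ if necessary forces $x\in D_n^i$ and $y\in D_m^i$ with $|n-m|\leq M_0=M_0(\delta_0)$ uniformly in $n,m$. Combined with the flight-time asymptotic \eqref{vuelo}, this yields $|\tau(x)-\tau(y)|\leq T^*$ for some $T^*=T^*(\delta_0)$.

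Applying the estimate from the first step to both orbit segments gives
$$\frac{\bigl|\det DX_{\tau(y)}(y)|_{E^c_y}\bigr|}{\bigl|\det DX_{\tau(x)}(x)|_{E^c_x}\bigr|}\geq e^{(\lambda_s+\lambda_u)(\tau(y)-\tau(x))}\cdot e^{-\widetilde{\varepsilon}_0(\tau(x)+\tau(y))}.$$
The first factor is bounded below by $e^{-|\lambda_s+\lambda_u|T^*}$, a fixed positive constant that can be absorbed into $K_1$. Using $\tau(y)\leq \tau(x)+T^*$, the second factor is bounded below by $e^{-\widetilde{\varepsilon}_0 T^*}\cdot C_0^{\tau(x)}$ with $C_0:=e^{-2\widetilde{\varepsilon}_0}$, yielding the desired estimate with $C_0\to 1$ as $\widetilde{\varepsilon}_0\to 0$.

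The main obstacle I anticipate is establishing the index-bound $|n-m|\leq M_0$. Two points entering very different partition elements would land at incompatible radial scales from $\sigma$, and since the unstable direction expands by a factor of order $e^{\lambda_u n}$ along the passage, any mismatch in indices would produce separation on the exit cross-section of order $e^{\lambda_u|n-m|}$, which cannot be absorbed into $\delta_0$ for large $|n-m|$. Making this rigorous requires combining the approximate linearization inside $V_{\widetilde{\beta}}$ with Lemma \ref{rL}, which allows us to realign the forward orbit of $y$ with the entry cross-section through a short flow time, and then showing that the shadowing hypothesis propagates through the passage near $\sigma$ in a way compatible with the assigned indices.
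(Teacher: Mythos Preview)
Your determinant-ratio argument is correct and takes a genuinely different route from the paper. The paper never writes the determinant as $e^{(\lambda_s+\lambda_u)t}e^{\xi}$; instead it fixes $\delta_0$ by uniform continuity of $z\mapsto \det DX_1(z)|_{E^c_z}$ so that $d(p,q)\leq \delta_0$ implies $\frac{|\det DX_1(q)|_{E^c_q}|}{|\det DX_1(p)|_{E^c_p}|}\geq C_0$, and then uses the shadowing hypothesis at each integer time $0\leq i\leq n_1$ (where $n_1$ is the last integer with $X_{n_1}(y)\in V_\sigma$) to telescope the ratio into $\geq K_1 C_0^{\tau(x)}$. Your approach instead bounds each factor absolutely via the $C^1$-closeness to the linear flow and takes the quotient; this is cleaner here and does not consume the shadowing hypothesis at intermediate times. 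Both yield the same conclusion with $C_0\to 1$ as the relevant small parameter goes to $0$.

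There is, however, a real gap in your step 2 as written. You claim that shrinking $\delta_0$ forces $|n-m|\leq M_0$ uniformly, appealing only to the fact that the partition index is (essentially) the log-distance to the stable leaf. This is false at the level of the entry section alone: the element $D_n^i\cap\Sigma_\sigma^{i,\pm}$ has transverse width of order $e^{-(\lambda_u+1)n}$, so for any fixed $\delta_0>0$ and all $n$ with $e^{-(\lambda_u+1)n}\ll \delta_0$, a point $y$ with $d(x,y)\leq \delta_0$ on the entry section can land in $D_m^i$ with $m$ bounded only in terms of $\delta_0$, not of $n$; hence $|n-m|$ is unbounded. Your final paragraph identifies the correct fix---use the full shadowing hypothesis and the unstable expansion across the passage to force separation of order $e^{\lambda_u|n-m|}$ on exit---and this is exactly what the paper does: it passes to a (H\"older) Grobman--Hartman conjugacy, pushes the partition through $h$, and shows by an explicit computation in the linear flow that if $y'\in D_{n+k}^i$ with $|k|>1$, then at the exit time of $x$ the linear orbits are at distance $\geq 1-e^{\rho+\lambda_u\beta}>\varepsilon$, a contradiction. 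You can equally well run this with property (d) in place of Grobman--Hartman, but you must invoke the global shadowing hypothesis, not just closeness on $\Sigma^{i,\pm}_{\widetilde\beta}$. Once $|n-m|\leq 1$ (or any fixed $M_0$) is established, your flight-time bound via \eqref{vuelo} and the rest of your argument go through.
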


\begin{proof}
Let $\sigma$ be an attached singularity of $\Lambda$. First, recall that since $\sigma$ is hyperbolic, we can use the Grobman-Hartman Theorem to conjugate $X_t$ with its linear part in a small neighborhood of $\sigma$. More precisely, there are a neighborhood $U_{\sigma}$ of $\sigma$, a neighborhood $V_0$ of $0\in T_{\sigma}M$ and a homeomorphism $h:U_{\sigma}\to V_0$ that conjugates $X_t$ with its  linear flow $L_t$. Furthermore, this homeomorphism can be chosen to be H\"older continuous (see \cite{BV} for more details). Thus, there are  $C=C(\sigma)>0$ and $\alpha(\sigma)>0$ such that 
\begin{equation}\label{Holder}
    \Vert h(x)-h(y)\Vert\leq Cd(x,y)^{\alpha},\quad\forall x,y\in U_{\sigma}. 
\end{equation}

Since we have only finitely many singularities in $\Lambda$, we can shrink  $U_{\sigma}$, $\sigma\in Sing_{\Lambda}(X)$, if it is necessary to obtain uniform H\"older constants for every singularity, i.e., we can write \eqref{Holder} with
\begin{displaymath}
C=\max_{\sigma\in Sing_{\Lambda}(X)}\lbrace C(\sigma)\rbrace\text{ and }\alpha=\min_{\sigma\in Sing_{\Lambda}(X)}\lbrace \alpha(\sigma)\rbrace.
\end{displaymath}

Now, let us consider  
\begin{displaymath}
W^{ss}(0)=\lbrace (a,b,c): b=c=0\rbrace,\quad W^s(0)=\lbrace (a,b,c): b=0\rbrace
\end{displaymath}
and 
\begin{displaymath}
W^u(0)=\lbrace (a,b,c): a=c=0\rbrace.
\end{displaymath}
inside of the neighborhood $V_0$. We can assume that $V_{\sigma}\subset U_{\sigma}$, where $V_{\sigma}$ is as in \eqref{vecindad}. In particular, we have that  $\Sigma_{\sigma}^{i,o,\pm}\subset U_{\sigma}$ for every $\sigma\in\Lambda$. Since $h$ is a homeomorphism, it induces a partition of $h(\Sigma_{\sigma}^{i,\pm})\subset U_{0}.$ Indeed, we just need to consider the family $$\mathcal{F}=\{h(D_n^i\cap\Sigma_{\sigma}^{i,\pm})\}_{n\geq n_0}.$$ 
By the remarks in page 380 of \cite{SYY}, there  is $K'>0$ such that for every $x\in D_n^i\cap\Sigma_{\sigma}^{i,\pm}$, we have  
    $$d(x,W^s(\sigma))\leq K'e^{-(\lambda_u+1)n}.$$  
Then, since $h(W^s(\sigma))=W^s(0)$, the H\"older  estimates of $h$ gives us 
\begin{eqnarray*}
d(h(x),W^s(0))&\leq& Cd(x,W^s(\sigma))^{\alpha}\\
&\leq &CK'e^{-\alpha(1+\lambda_u)n} \\
&=& e^{\left(\frac{\ln (CK')}{n}-\alpha(1+\lambda_u)\right)n}. 
\end{eqnarray*}

On the other hand, let $0<\varepsilon_0<\alpha(1+\lambda_u)$ and $n_0\geq 1$ such that  $$\vert\ln (CK')/n\vert<\varepsilon_0,\forall n\geq n_0.$$  Then, 
\begin{displaymath}
d(h(x),W^s(0))\leq e^{\rho n},\quad \forall x\in D_n^i\cap\Sigma_{\sigma}^{i,\pm},
\end{displaymath}
where $\rho=\varepsilon_0-\alpha(1+\lambda_u)<0$. In fact, since $\mathcal{F}$ is a partition of $h(\Sigma_{\sigma}^{i,\pm})$ we have that 
\begin{equation}\label{part}
e^{\rho(n+1)}\leq d(h(x),W^s(0))\leq e^{\rho n},\quad \forall x\in D_n^i\cap\Sigma_{\sigma}^{i,\pm},\forall n\geq n_0. 
\end{equation}
Therefore, if $$h(\Sigma_{\sigma}^{o,\pm})=\Sigma_0^{\pm}=\lbrace p=(\pm1,b,c):\vert b\vert,\vert c\vert<1\rbrace\subset U_0,$$
we obtain  by \eqref{part} that the flight time $\tau(p)$, $p\in h(\Sigma_{\sigma}^{i,\pm})$, to go from $h(\Sigma_{\sigma}^{i,\pm})$ to $\Sigma_0^{\pm}$ satisfies 
\begin{equation}\label{timexit}
    -\frac{\rho}{\lambda_u}n\leq\tau(p)\leq -\frac{\rho}{\lambda_u}(n+1). 
\end{equation}

Now, since $h$ conjugates $X_t$ and $L_{\sigma}$, by applying Lemma \ref{conj} we  obtain that for every  $\beta>0$ there is $\varepsilon>0$ such that if $v\in V_0$ and $w\in h(\Sigma_{\sigma}^{i,\pm})$  satisfy  $\Vert v-w\Vert<\varepsilon$, then $L_u(v)\in h(\Sigma_{\sigma}^{i,\pm})$, with $u\in(-\beta,\beta)$.

Take a compact neighborhood $C'$ of $V_0$ inside $U_0$. By uniform continuity of $h$ on $h^{-1}(C')$, there is $\delta>0$ such that $\Vert h(z)-h(w)\Vert<\varepsilon$ if $z,w\in h^{-1}(C')$ satisfy $d(z,w)<\delta$. Besides, by uniform continuity of $DX_1(\cdot)$ and $E^c$ on $\overline{U}$ there exists  $0<\delta_0<\delta$ such that 
\begin{equation}
    \text{if }\quad d(x,y)<\widetilde{\delta}_0,\quad\text{then}\quad \frac{\vert \det DX_1(y)\vert_{E^c_y}\vert}{\vert \det DX_1(x)\vert_{E^c_x}\vert}\geq C_0\approx 1.
\end{equation}

Let us consider 
\begin{displaymath}
0<\beta<-\rho/\lambda_u , \quad 0<\varepsilon<1-e^{\rho+\lambda_u\beta},
\end{displaymath}
and $x,y\in\Lambda$ such that $$d(X_t(x),X_t(y))\leq\delta_0 \quad \forall t \in \mathbb{R}.$$
Suppose that there is $t\geq0$ such that $$X_t(x)\in D_n^i\cap\Sigma_{\sigma}^{i,\pm},$$ for some $\sigma\in Sing_\Lambda(X)$ and $n\geq n_0$. Then, there is $s\in\mathbb{R}$ such that $y'=X_{s+t}(y)\in \Sigma_{\sigma}^{i,\pm}$. Moreover, both points $y'$ and $X_t(x)$ belongs to the same connected component of $\Sigma_{\sigma}^{i,\pm}\setminus\ell_{\pm}.$

\textbf{Claim:} $y'\in \left( D_{n-1}^i\cap\Sigma_{\sigma}^{i,\pm}\right)\cup \left( D_n^i\cap\Sigma_{\sigma}^{i,\pm}\right)\cup \left(D_{n+1}^i\cap\Sigma_{\sigma}^{i,\pm}\right)$. 

Indeed, assume that $y'\in D_{n+k}^i\cap\Sigma_{\sigma}^{i,\pm}$ for some $k>1$. By the choice of $\delta$ there is $u\in(-\beta,\beta)$ such that 
\begin{displaymath}
h(y')=(a_0,b_0,c_0)=(e^{\lambda_{ss}u}a,e^{\lambda_uu}b,e^{\lambda_su}c)\in h(D_{n+k}^i\cap\Sigma_{\sigma}^{i,\pm}).
\end{displaymath}
Moreover, assume that $a,c>0$. Since $h(x)\in D_n^i\cap\Sigma_{\sigma}^{i,\pm}$, we have by \eqref{part} and \eqref{timexit} that
\begin{eqnarray*}
\varepsilon>\Vert  L_{\tau(h(x))}(h(x))-L_{\tau(h(x))}(h(y))\Vert&\geq& 1-e^{\lambda_u\tau(h(x))}a\\
&=&1-e^{-\lambda_uu}e^{\lambda_u\tau(h(x))}a_0\\
&\geq& 1-e^{\rho+\lambda_u\beta},
\end{eqnarray*}
which contradicts the choice of $\varepsilon$, and the claim is proved. 

So, by the above claim, we have that.
   
   \begin{equation}\label{L}
\vert\tau(x)-\tau(y)\vert\approx \frac{\lambda_u+1}{\lambda_u}=L.
\end{equation}
  
 Let $n_1\geq 0$ be the largest integer such that $X_{n_1}(y)\in V_{\sigma}$. Thus we have $\tau(y)=n_1+r_y$ with $0\leq r_y\leq 1$ and $\tau(x)=n_1+r_x$ with $|r_x|\leq L+1$. Therefore, by the chain rule, we have 
 
 $$\frac{\vert \det DX_{\tau(y)}(y)\vert_{E^c_y}\vert}{\vert \det DX_{\tau(x)}(x)\vert_{E^c_x}\vert}\geq K_1 C_0^{\tau(x)},$$
 
  for some $K_1>0$. 
 \end{proof}

Once we have the previous lemmas, we can quickly adapt the proof of lemma 3.6 in \cite{RV} to obtain the desired control of the hyperbolic times.

 \begin{lemma}\label{jac}
There exist positive numbers $\delta_4$, $T$, $c_*$ such that if $x\in\Lambda'$, and $y\in U$ satisfy
\begin{displaymath}
d(X_t(x),X_{t}(y))\leq\delta_4,\quad\forall t\in\mathbb{R},
\end{displaymath}
and given a $C$-hyperbolic time $t_x\geq T$, we have  
\begin{equation}\label{expansion}
\vert\det DX_{t_x}(y)\vert_{E^c_y}\vert\geq e^{c_*t_x}.
\end{equation}
\end{lemma}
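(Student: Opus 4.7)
The plan is to split the orbit segment $X_{[0,t_x]}(x)$ into finitely many consecutive pieces according to which of three regions —the neighborhood $W_0$ of $\Lambda_+$, the neighborhood $W_1$ of $\Lambda''$, or a singular neighborhood $V_\sigma$— the orbit currently lies in, apply the corresponding ratio estimate (Lemma \ref{W_0}, Lemma \ref{W_1} or Lemma \ref{lema31}) on each piece, and then telescope via the chain rule. The hyperbolic-time hypothesis $|\det DX_{t_x}(x)|_{E^c_x}|\geq e^{C t_x}$ then converts the resulting ratio estimate into the desired lower bound on $|\det DX_{t_x}(y)|_{E^c_y}|$.

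First I would fix small parameters $\varepsilon_0,\varepsilon_1,\tilde\varepsilon_0>0$, to be calibrated at the end. Applying Lemmas \ref{W_0}, \ref{W_1}, \ref{lema31} produces respectively $(\delta_1,T_0,K_2\approx 1,W_0)$, $(\delta_2,T_1,K_3\approx 1,W_1)$ and $(\delta_0,\tilde\beta,K_1,C_0\approx 1)$, while Lemmas \ref{conj} and \ref{rL} provide $\varepsilon_2,\beta',\delta_3$. I set
\[
\delta_4:=\min\{\delta_0,\delta_1,\delta_2,\delta_3,\varepsilon_2\}.
\]
Given $(x,y)$ with $d(X_t(x),X_t(y))\leq\delta_4$ for all $t\in\mathbb{R}$, I walk along $X_{[0,t_x]}(x)$ and record the successive maximal subintervals on which the orbit lies in $W_0$, in $W_1\setminus V$, or inside some $V_\sigma$. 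At each entry into $V_\sigma$, Lemma \ref{rL} produces a small time shift aligning the orbit of $y$ with the same entry cross-section $\widetilde{\Sigma^{i,\pm}_{\tilde\beta}}$, and Lemma \ref{conj} keeps the shift uniformly bounded. On a $W_0$-piece of length $\leq T_0$ Lemma \ref{W_0} gives ratio $\geq K_2$; on a $W_1\setminus V$-piece of length $\leq T_1$ Lemma \ref{W_1} gives $\geq K_3$; on a singular passage of length $\tau$ Lemma \ref{lema31} gives $\geq K_1 C_0^{\tau}$.

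Multiplying via the chain rule yields
\[
\frac{|\det DX_{t_x}(y)|_{E^c_y}|}{|\det DX_{t_x}(x)|_{E^c_x}|}\ \geq\ K_1^{N_*}\,K_2^{N_0}\,K_3^{N_1}\,C_0^{\,t_x},
\]
where $N_0,N_1,N_*$ count the pieces of each type. Each count is linear in $t_x$ with slope controlled by $1/T_0$, $1/T_1$ and $1/\tau_{\min}$; by shrinking $\tilde\beta$ (so that the minimum passage time through $V$, of order $(\lambda_u+1)n_0/\lambda_u$ by \eqref{vuelo}, becomes large), the contribution of $K_1^{N_*}$ per unit time can be forced as close to $1$ as we like. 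Combined with $K_2,K_3,C_0\approx 1$, the right-hand side is at least $e^{-c' t_x}$ for arbitrarily small $c'>0$. Picking $c'<C$ and absorbing the fixed multiplicative constants into the choice of $T$, the hyperbolic-time hypothesis for $x$ gives $|\det DX_{t_x}(y)|_{E^c_y}|\geq e^{(C-c')t_x}$, and $c_*:=C-c'>0$ is the required rate.

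The main obstacle will be the bookkeeping: verifying that the three types of segments genuinely tile $[0,t_x]$ with counts linear in $t_x$ uniformly in $x$, and that at each singular entry the shadowing orbit of $y$ can be consistently realigned via Lemma \ref{rL} so that Lemma \ref{lema31} applies with $y'\in\widetilde{\Sigma^{i,\pm}_{\tilde\beta}}$. Once this is in place, the estimate is just a telescoping of the three per-segment ratios, and the careful choice of the $\varepsilon$-parameters that made $K_2,K_3,C_0$ close to $1$ does the rest.
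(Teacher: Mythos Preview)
Your proposal is correct and follows essentially the same approach as the paper: both arguments decompose $[0,t_x]$ into consecutive pieces lying in $W_0$, $W_1\setminus V$, or a singular passage, apply Lemmas~\ref{W_0}, \ref{W_1}, \ref{lema31} respectively on each piece, telescope via the chain rule, and then shrink $\widetilde\beta$ so that the minimum singular flight time is large enough to absorb the fixed constant $K_1$ into an arbitrarily small exponential loss. The paper's proof differs only in making the bookkeeping explicit---it constructs the orbit partition inductively via a sequence $(x_n)$, distinguishes a fourth case (a $W_0$-piece that runs into $V$ before the time $T_0$ and must be truncated, contributing an extra factor $(K_0/K_5)^{n_C+1}$), and includes the Lebesgue number of the cover $\{V,W_0,W_1\}$ in the definition of $\delta_4$ to guarantee the tiling; these are precisely the details you flagged as ``the main obstacle''.
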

 
 \begin{proof}
 Let  $\widetilde{\varepsilon}_0, \varepsilon_0,\varepsilon_1$ be given by the previous lemmas and let  $c^*,\alpha>0$  satisfying
 \begin{equation} \label{c*}
C-(\alpha+|\ln(C_0)|+|\ln K_2|+|\ln K_3|) > c^*>0,     
 \end{equation}
where $K_2=K_2(\varepsilon_0)$ and $K_3=K_3(\varepsilon_1)$. Since $C_0 \approx 1$, $K_2 \approx 1$, $K_3 \approx 1$, it is possible to choose such $c^*$.
 Let $\delta_0, \delta_1=\delta_1(\varepsilon_0), \delta_2=\delta_2(\varepsilon_1)$ and $\delta_3$ be given by Lemmas \ref{lema31}, \ref{W_0} and \ref{rL}, respectively. Let consider
 $$0< \delta_4 < \min\left\lbrace\delta_0,\delta_1,\delta_2,\delta_3,\delta'\right\rbrace,$$
 where $\delta '$ is the Lebesgue's number of the open cover $V,W_0,W_1$ of $\Lambda$ and fix  $T=\max\{T_0,T_1\}$.
 
 Let $ x \in \Lambda'$ and $y \in U$ be two points satisfying the condition given the statement of lemma. Next, we will take a sequence of points $x_n$ in the orbit of $x$ which is contained in the union of $$((W_0 \cup W_1)\setminus V)\cup \widetilde{\Sigma_{\widetilde{\beta}}^{i,\pm}}.$$ Let $s_{0} \geq 0$ (if it exists) be the first time satisfying $$x_0 =X_{s_0}(x) \in\widetilde{\Sigma_{\widetilde{\beta}}^{i,\pm}},$$ where $\widetilde{\beta}$ is given by lemma \ref{lema31}. Then, define $$x_1=X_{s_0+\tau(x_0)}(x)$$
and  note that $x_1\in (W_0 \cup W_1)\setminus V$. Now we split the definition of $x_2$ into two cases:
\begin{enumerate}
    \item If $x_1\in W_1\setminus V$, then  Lemma \ref{W_1} implies the existence of $0<s\leq T$ such that $X_s(x) \in V$, and therefore we can take $0<s_1<s$ such that $X_{s_1}(x_1) \in\widetilde{\Sigma_{\widetilde{\beta}}^{i,\pm}}$. In this case,  define $x_2=X_{s_1}(x_1)$.
    \item If $x_1\in W_0\setminus V$, then  we consider $0<t_1\leq T$ given by Lemma \ref{W_0} and define  
  $x_2=X_{t_1}(x_1)$, if $X_{t_1}(x_1) \notin V$ or $x_2=X_{r_1}(x_1)$, if $X_{t_1}(x_1)\in V$ and $0<r_1<t_1$ is such that $$X_{r_1}(x_1) \in\widetilde{\Sigma_{\widetilde{\beta}}^{i,\pm}}.$$
  
\end{enumerate}
  Proceeding inductively for $n\geq3$, we define  

$$x_n= \left\{ \begin{array}{lcl}
     X_{\tau(x_{n-1})}(x_{n-1}) &  if  & x_{n-1} \in \widetilde{\Sigma_{\widetilde{\beta}}^{i,\pm}}, \\
     X_{s_{n-1}}(x_{n-1}) &  if & x_{n-1} \in W_1 \setminus V, \\
     X_{t_{n-1}}(x_{n-1}) &   if  & x_{n-1} \in W_0 \setminus V \textrm{ and } X_{t_{n-1}}(x_{n-1}) \notin V,\\ 
     X_{r_{n-1}}(x_{n-1}) &   if  & x_{n-1} \in W_0 \setminus V \textrm{ and } X_{t_{n-1}}(x_{n-1}) \in V.
             \end{array}
   \right.$$
   
  For a fixed $n\geq 1$, let us consider the following sets
   \begin{itemize}
       \item $O_n=\{0\leq i\leq n  : x_i \in \widetilde{\Sigma_{\widetilde{\beta}}^{i,\pm}}\}$, $n_O= \# O_n$,
       \item $A_n=\{0\leq i\leq n : x_i \in W_1\} $, $n_A= \# A_n$, 
       \item $B_n=\{0\leq i\leq n : x_i \in W_0 \textrm{ and } X_{t_i}(x_i) \notin V \}$, $n_B= \# B_n$,
       \item $C_n=\{0\leq i\leq n : x_i \in W_0\textrm{ and } X_{t_i}(x_i) \in V\} $ and $n_C= \# C_n$.
   \end{itemize}
Note that for every $x_n$, there exists $t'(n)>0$ such that $x_n=X_{t'(n)}(x)$ where

 \begin{equation} \label{tABO}
t'(n)=s_0+ \sum_{i \in A_n}s_i+ \sum_{i \in B_n}t_i+ \sum_{i \in C_n}r_i+ \sum_{i \in O_n}\tau(x_{i}).   \end{equation}
If $y$ satisfies  
\begin{equation}\label{shadowing}
 d(X_t(x),X_t(y))\leq \delta_4, \forall t\in \mathbb{R},
\end{equation} 
define $y_n=X_{t'(n)}(y)$.
 
Let consider $x_n \in \widetilde{\Sigma_{\widetilde{\beta}}^{i,\pm}}$. By hypothesis, we have that 
 \begin{equation*}
  d(x_n,y_n) \leq \delta_4 \textrm{ and } 
  d(X_{\tau (x_n)}(x_n),X_{\tau (x_n)}(y_n)) \leq \delta_4.   
 \end{equation*}
 By using twice Lemma \ref{rL}, there exists $l_0,l_1 \in (-L,L)$ such that
 \begin{equation*}
     y'=X_{l_0}(y_n)\in\widetilde{\Sigma_{\widetilde{\beta}}^{i,\pm}} \textrm{ and }
     X_{\tau (x_n)+l_1}(y_n) \in\widetilde{\Sigma_{\widetilde{\beta}}^{o,\pm}}.
 \end{equation*}
 Thus, 
 \begin{equation} \label{l0l1}
 \tau(x_n)=\tau(y')+l_0-l_1,   
 \end{equation}
 and by (\ref{l0l1}) and lemma (\ref{lema31}) we have that
 \begin{eqnarray} \label{c331}
  \vert\det DX_{\tau(x_n)}(y_n)\vert_{E^c_{y_n}}\vert 
  &\geq&  \vert\det DX_{-l_{1}}(X_{\tau(y')+l_0}(y_n))\vert_{E^c_{X_{\tau(y')+l_0}(y_n)}}\vert \\ 
  \nonumber & & \vert\det DX_{\tau(y')}(y')\vert_{E^c_{y'}}\vert  \vert\det DX_{l_0}(y_n)\vert_{E^c_{y_n}}\vert \\
  \nonumber &\geq& K' C_0^{\tau(x_n)} \vert\det DX_{\tau(x_n)}(x_n)\vert_{E^c_{x_n}}\vert,
 \end{eqnarray} 
 where 
 \begin{equation*} 
  K'=K_0^2K_1 \textrm{ and }  
K_0=\min_{(z,s)\in \widetilde{U}\times [-L,T+L] } \vert\det DX_{s}(z)\vert_{E^c_{z}}\vert. 
 \end{equation*}
Let $\lbrace t_k\rbrace_{k\geq1}$ be an unbounded and increasing sequence of $C$-hyperbolic times for $x$. Up to a slight change of $C$, we can assume that  $X_{t_k}(x) \notin V$. Thus, for every $k\geq 1$ one can write  $t_k=t'_k +u_k$, with $u_k \in [0,T)$ and $t_k'$ of the form (\ref{tABO}). Let us denote $\varphi_t(z)=\vert\det DX_{t}(z)\vert_{E^c_{z}}\vert$.  By the relations (\ref{estimateW}), (\ref{estimateoutside}) and (\ref{c331}), 
 \begin{eqnarray*}
  \vert\det DX_{t_x}(y)\vert_{E^c_y}\vert 
  &=& \vert\det DX_{u}(X_{t'_x}(y))\vert_{E^c_{X_{t'_x}(y)}}\vert
  \vert\det DX_{t'_x}(y) \vert_{E^c_y}\vert  \\
 &\geq & K_0 \prod_{i\in O_n} \varphi_{\tau(x_i)}(y_i) \prod_{i\in A_n} \varphi_{s_i}(y_i) \prod_{i\in B_n} \varphi_{t_i}(y_i) \prod_{i\in C_n} \varphi_{r_i}(y_i) \\
 &\geq & \overline{K} (K')^{n_O} C_0^{\sum_{i\in O_n}\tau(x_i)} K_3^{n_A} K_2^{n_B} \left( \frac{K_0}{K_5}\right)^{n_C+1}\varphi_{t_x}(x),
 \end{eqnarray*} 
 where
$$\overline{K}=\frac{\vert\det DX_{s_0}(y)\vert_{E^c_{y}}\vert}{\vert\det DX_{s_0}(x)\vert_{E^c_{x}}\vert } \textrm{ and } K_5=\max_{(z,s)\in \widetilde{U}\times [-L,T+L] } \vert\det DX_{s}(z)\vert_{E^c_{z}}\vert. $$  

Since $t_x$ is a $C$-hyperbolic time for $x$, by the above estimate we have that
\begin{equation} \label{C-}
\vert\det DX_{t_x}(y)\vert_{E^c_{y}}\vert \geq \overline{K} e^{(C+N(C_0)+N(K')+N(K_2)+N(K_3)+N(K_0,K_5))t_x},    
\end{equation}
 where
 \begin{itemize}
 \item $N(C_0)=\frac{\sum_{i\in O_n}\tau(x_i)}{t_x} \ln(C_0)$,
 \item $N(K')=\frac{n_O}{t_x}\ln(K')$
  \item $N(K_2)=\frac{n_A}{t_x}\ln(K_2(\epsilon_0))$,
  \item $N(K_3)=\frac{n_B}{t_x}\ln(K_3(\epsilon_0))$ and
  \item $N(K_0,K_5)=\frac{n_C+1}{t_x}\ln\left(\frac{K_0}{K_5}\right)$.
  \end{itemize}
  
 Now, since $t_x > \sum_{i\in O_n}\tau(x_i)$ it follows that
 \begin{equation*}
\vert N(C_0) \vert \leq | \ln C_0 |.      
 \end{equation*}
Furthermore, since $n_C+1 \leq n_O$ by the construction of the sequence $x_n$ we have
 \begin{equation*}
     \Big | \frac{n_O}{t_x} \ln K' + \frac{n_C+1}{t_x} \ln \frac{K_0}{K_5} \Big | \leq
      \frac{n_O}{t_x}\Big | \ln K' + \ln \frac{K_0}{K_5} \Big | \leq \frac{|\ln K'| +|\ln \frac{K_0}{K_5}|}{n_0}.
 \end{equation*}
 Then, by shrinking $V_{\widetilde{\beta}}$ if it is necessary, we have 
 \begin{equation*}
       \Big | \frac{n_O}{t_x} \ln K' + \frac{n_C+1}{t_x} \ln \frac{K_0}{K_5} \Big |\leq \frac{|\ln K'| +|\ln \frac{K_0}{K_5}|}{n_0}\leq  \alpha.
 \end{equation*}
 On the other hand, since $\frac{(n_A+n_B+n_C+n_O)}{t_x}\leq 1$, we obtain 

      $$|N(K_2)| \leq |\ln K_2(\varepsilon_0)| \textrm{ and }
     |N(K_3)| \leq |\ln K_3(\varepsilon_1)|, $$
and by (\ref{c*}) and (\ref{C-}) we have
 \begin{eqnarray*}
 \vert\det DX_{t_x}(y)\vert_{E^c_y}\vert&\geq& \overline{K} e^{(C-(\alpha +| \ln C_0|+|\ln K_2(\varepsilon_0)|+|\ln K_3(\varepsilon_1)|))t_x} 
   \geq \overline{K} e^{c^*t_x}.
 \end{eqnarray*}
Finally, there exist $0<c_*<c^*$ and $\overline{T}>0$ such that $ \overline{K} e^{c^*t}\geq e^{c_*t} $ for all $t \geq \overline{T}$.
Therefore, for every $y$ satisfying (\ref{shadowing}) and every $C$-hyperbolic time $t_x \geq \overline{T}$ we have   
 \begin{equation*}
     \vert\det DX_{t_x}(y)\vert_{E^c_y}\vert\geq \overline{K} e^{c^*t_x} \geq e^{c_*t_x},
 \end{equation*}
 and this concludes the proof.
 \end{proof}

Next, we can finally prove the kinematic expansiveness of $\Lambda$. The proof we present here is very similar to the proof of Theorem A of \cite{AP}, as most of the steps from that work can be directly applied here. The only exception is Theorem 3.1 of \cite{AP}, which cannot be replicated directly in our context. Hence, we will be mainly focused on providing a detailed proof of a version of Theorem 3.1 from \cite{AP} adapted to our setting, and we will briefly explain the other steps of the argument.
  
\begin{proof}[Proof of Theorem \ref{k-exp}]

 We begin assuming that $X$  is not kinematic expansive on an ASH attractor $\Lambda$. Then, there are $\varepsilon>0$, $x_n,y_n\in \Lambda$ and $\delta_n\to 0$ such that $y_n\notin X_{[-\varepsilon,\varepsilon]}(x_n)$ and 
 \begin{equation}\label{shad}
     d(X_t(x_n),X_t(y_n))\leq \delta_n \textrm{ for every } t\in \mathbb{R}.
 \end{equation}

Following the arguments in \cite{AP} we can find a regular point $z\in \Lambda$ and $z_n\in \omega(x_n)$ such that $z_n\to z$. Let us consider $\Sigma_{\eta}$ an $\eta$-adapted cross-section trough $z$. By using flow boxes in a small neighborhood of $\Sigma_{\eta}\cup\widetilde{\Sigma^{i,o,\pm}}$ we can  find positive numbers $\delta', t_0$ such that for any $\Sigma'\subset \Sigma_{\eta}\cup\widetilde{\Sigma^{i,o,\pm}}$, $z\in\Sigma'$ and $w\in M$ with $d(z,w)<\delta'$, there is $t_w\leq t_0$ such that $w'=X_{t_w}(w)\in\Sigma'$ and $d_{\Sigma'}(z,w')<K'\delta'$, where $d_{\Sigma'}$ is the intrinsic distance in $\Sigma'$, for some constant positive constant $K'$ which depends on $\Sigma_{\eta}\cup\widetilde{\Sigma^{i,o,\pm}}$.

Let $N>0$ be large enough such that $$0<\delta_N<\min\left\lbrace\delta_4,\eta,\eta_0,\delta'\right\rbrace,$$ where $\eta_0$ and $\delta_4$ are given by Remark 4.2, Lemma \ref{jac}  respectively. Let us denote $x=x_N$  and $y=y_N$. The next claim is our version of Theorem 3.1 in \cite{AP}.

\vspace{0.1in}
\textbf{Claim: }There is $s\in\mathbb{R}$ such that $X_{s}(y)\in W_{\varepsilon}^{ss}(X_{[s-\varepsilon,s+\varepsilon]}(x))$.
\vspace{0.1in}

Once the claim holds, by following the proof of Theorem A in \cite{AP} step by step, one can conclude the proof of the theorem. Because of this, we now devote ourselves to proving the previous claim.
By construction, the orbit of $x$ must intersect $\Sigma_{\eta}$ in infinitely many positive times $t_j$. Let us denote $x_j=X_{t_j}(x)$. Thus, we can find a sequence of times $s_j$ close to $t_j$ such that $y_j=X_{s_j}(y)$ also intersect $\Sigma_{\eta}$. Now, we briefly recall the construction of the tube-like domain presented in \cite{AP}. For any $j\geq 0$ we can find a smooth immersion $$\rho^j:[0,1]\times[0,1]\to M $$ such that the following holds:

\begin{enumerate}
    \item $\rho^j([0,1]\times \{0\})$ is the orbit arc from $x_j$ to $x_{j+1}$ and $\rho^j([0,1]\times \{1\})$ is the orbit arc from $y_j$ to $y_{j+1}$ and 
    \item $\rho^j(\{0\}\times [0,1])$ is a curve contained in $\Sigma_{\eta}$, everywhere tranverse to $W^s(\Sigma_{\eta})$ and joining $x_j$ and $y_j$.
    \item $\rho^j(\{1\}\times [0,1])$ is a curve contained in $\Sigma_{\eta}$, everywhere tranverse to $W^s(\Sigma_{\eta})$ and joining $x_{j+1}$ and $y_{j+1}$.
    \item Denote $S_j=\rho^j([0,1]\times [0,1])$. Then the intersection of  $S_j$ with any $\Sigma^{i,o,\pm}_{\sigma}$ is transverse to stable foliation of $\Sigma^{i,o,\pm}_{\sigma}$.
    \end{enumerate} 
 Denote  $$\mathcal{T}_j=\bigcup_{p\in S_j}W^s_{loc}(p).$$
 The results in \cite{AP} show that $\mathcal{T}_j$ does not contain singularities. Moreover, if $\mathcal{T}_{j}$ intersects some $\Sigma^{i,o,\pm}_{\sigma}$, this intersection is contained in the same connected component of $\Sigma^{i,o,\pm}_{\sigma}\setminus W^s(\sigma)$. Finally, in \cite{AP} it was also shown the existence of a Poincar\'{e} map $R_j$ between the whole strip between the stable manifolds of $x_j$ and $y_j$ inside $\Sigma_{\eta}$. Unfortunately, we cannot guarantee that this Poincar\'{e} maps are hyperbolic in the same way as in \cite{AP} because we are in the ASH setting. Nevertheless, we will obtain some expansion for these maps using the previous lemmas.  To prove the claim we first assume that $$X_{s}(y)\notin W_{\varepsilon}^{ss}(X_{[s-\varepsilon,s+\varepsilon]}(x)),$$ for every $s\in\mathbb{R}$. 
In this case, we have $y\notin\mathcal{O}(x)$; otherwise by Lemma \ref{conj} and Remark \ref{rmk} we obtain that $y$ belongs to  $X_{[-\varepsilon,\varepsilon]}(x)$, and this is a contradiction.

   In addition, unless to take a subsequence of $x_j$, we can find a sequence of arbitrarily large $C$-hyperbolic times $\lbrace t_j\rbrace_{j\geq 1}$ of $x$ such that $x_j=X_{t_j+r_j}(x)\in\Sigma_{\eta}$, where $0\leq r_j<T$, with $$T=\max\lbrace T_0,T_1 \rbrace.$$
   Besides, we have that $y_j=X_{t_j+r_j+v_j}(y)\in\Sigma_{\eta}$, where $\vert v_j\vert\leq t_0$. Moreover, by shrinking $U$ if it is necessary, by Lemma 2.7 in \cite{AP} there is $\kappa_0>0$ such that 
\begin{equation}\label{cotabajo}
\ell(\gamma_n)\leq\kappa_0d_{\Sigma'}(x_j,y_j)\leq \kappa_0K'\delta',    
\end{equation}
where $\gamma_j$ is any curve joining $x_j$ and $y_j$, for every $n\geq 1$. In particular, this holds for the curves $$\gamma_j=\rho^j(\{1\}\times [0,1])\subset \Sigma_{\eta}.$$

On the other hand, take
\begin{displaymath}
\kappa=\min_{(z,s)\in \overline{B_{\delta'}(\Sigma_{\eta})}\times [-t_0,t_0]}\vert\det DX_{s}(z)\vert_{E^c_z}\vert\cdot\min_{(z,s)\in\overline{U}\times [0,T]}\vert\det DX_{s}(z)\vert_{E^c_z}\vert>0.
\end{displaymath}
Let $\lambda>\kappa_0K'\delta'$, and let $j_1$ large enough such that $\kappa e^{c_*t_{j_1}}>\lambda$. Let $R$ be a Poincar\'{e} map whit return time  
\begin{displaymath}
s(x)\approx t_{j_2}+T\quad s(z')\approx t_0+t_{j_2}+T,\quad\forall z'\in\gamma_{0}, 
\end{displaymath}
where $j_2>j_1$ is large enough. Figure 2 helps visualize the definition of $R$. 

\begin{figure}[ht]
\includegraphics[scale=0.4]{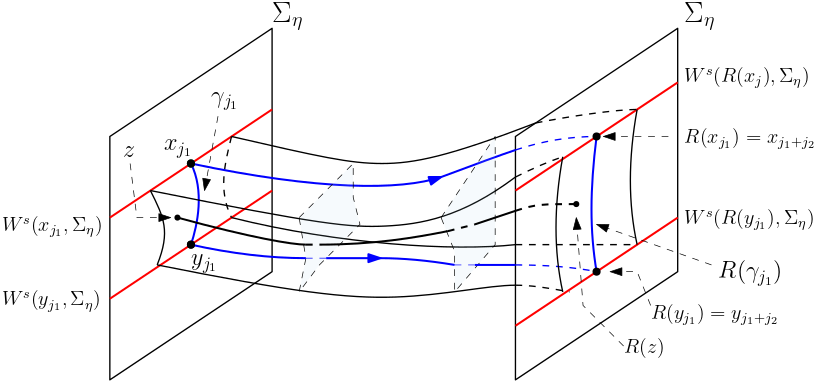}
\caption{The Poincar\'e map $R$.}
\end{figure}

Following the proof of Lemma \ref{jac}, we deduce, by shrinking $\varepsilon'$ if it is necessary, that the relation \eqref{expansion} is satisfied for any $z\in\gamma_{j_1}$. So, by definition of $R$ we have that $R(\gamma_{j_1})$ is a curve in $\Sigma_{\eta}$ that connects $x_{j_1+j_2}$ with $y_{j_1+j_2}$ and satisfies
\begin{displaymath}
\ell(R(\gamma_{j_1}))\geq ke^{c_*t_{n_1}}\geq \lambda> \kappa_0K'\delta',
\end{displaymath}
which contradicts \eqref{cotabajo}. So, we have that $y_{j_1+j_2}\in W^s(x_{j_1+j_2},\Sigma_{\eta})$. Therefore, we obtain the claim by following step by step the argument given in \cite{AP}, p. 2456.  
\end{proof}

\subsection{Positive Entropy and Periodic Orbits}

Next we shall proceed to prove Theorem \ref{HomClass3}. Our argument here is inspired by the work \cite{SMV3}. To begin with,  let us consider $E^s\oplus E^c$ and $U_0$. As a first step for the proof of Theorem \ref{HomClass3}, we show that the ASH property can be extended to the trapping region $U_0$ in the following way: 

\begin{lemma}
Let $\Lambda$ be an asymptotically sectional-hyperbolic attractor associated to a $C^1$ vector field $X$. Then, we have 
\begin{displaymath}
\limsup_{t\to\infty}\frac{1}{t}\log\vert\det DX_t(x)\vert_{E_x^c}\vert>0,\quad\forall x\in U'=U_0\setminus W^s(Sing(x)). 
\end{displaymath}
\end{lemma}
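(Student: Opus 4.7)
The plan is to extend the ASH lower bound on area expansion from $\Lambda'$ to $U'$ by combining a topological argument that locates a witness $y\in\omega(x)\cap\Lambda'$ with a dynamical argument propagating exponential area growth from $y$ back to $x$. For the topological ingredient, the trapping property of $U_0$ gives $\omega(x)\subset\Lambda$, which is nonempty, compact and connected; since $x\notin W^s(Sing(X))$ and the singularities are hyperbolic (hence isolated), $\omega(x)$ cannot reduce to a single singularity and so must contain a regular orbit. An invariance–closure argument, combined with Theorem \ref{T1} (every singularity of $\Lambda$ is real of index two with $\Lambda\cap W^{ss}(\sigma)=\{\sigma\}$), then yields $y\in\omega(x)\setminus W^s(Sing(X))=\omega(x)\cap\Lambda'$.

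Fix such a $y$ and, by ASH, a sequence of $C$-hyperbolic times $(t_k)$ for $y$. The map $z\mapsto \det DX_{t_k}(z)|_{E^c_z}$ is continuous on $U_0$ (using the continuous extension $\widetilde{E}^c$), so each $t_k$ admits a neighborhood $V_k\ni y$ with $|\det DX_{t_k}(z)|_{E^c_z}|\geq\tfrac12 e^{Ct_k}$ on $V_k$. Since $y\in\omega(x)$, pick $s_k\geq T_0$ with $X_{s_k}(x)\in V_k$. The image plane $DX_{s_k}(x)E_x^c$ lies in $C_a^c(X_{s_k}(x))$, and domination thins this cone at rate $ae^{-\lambda s_k}$, so a standard comparison yields a constant $K_0>0$ independent of $k$ with
\begin{displaymath}
|\det DX_{t_k}(X_{s_k}(x))|_{DX_{s_k}(x)E_x^c}|\geq \tfrac{1}{2K_0}e^{Ct_k}.
\end{displaymath}
Combining with the chain rule and the trivial bound $|\det DX_{s_k}(x)|_{E_x^c}|\geq e^{-L's_k}$ (where $L'$ comes from a uniform $C^1$-bound on $X$) produces
\begin{displaymath}
|\det DX_{s_k+t_k}(x)|_{E_x^c}|\geq \tfrac{1}{2K_0}e^{Ct_k-L's_k},
\end{displaymath}
so the desired positive limsup follows if $t_k/s_k$ can be kept bounded below by $L'/C+\varepsilon$ along a subsequence.

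The main obstacle is precisely this balance: a priori $V_k$ shrinks with $t_k$, so the recurrence time $s_k$ into $V_k$ can grow much faster than $t_k$. The remedy is to iterate with a fixed pair $(V,T)$, where $V$ is a small neighborhood of $y$ and $T$ is one hyperbolic time of $y$ chosen so that $|\det DX_T(z)|_{E^c_z}|\geq\tfrac12 e^{CT}$ for every $z\in V$. Letting $s_1<s_2<\cdots$ be successive returns of $X_s(x)$ to $V$ with gaps $R_n=s_{n+1}-s_n$, the chain rule together with the cone comparison yields a per-return gain $\gtrsim e^{CT}$ and an inter-return loss $\lesssim e^{L'(R_n-T)}$, giving
\begin{displaymath}
\frac{1}{s_n}\log|\det DX_{s_n}(x)|_{E_x^c}|\geq (C+L')T\cdot\frac{n}{s_n}-L'+o(1).
\end{displaymath}
The limsup is then positive as soon as the asymptotic density $n/s_n$ of returns to $V$ is bounded away from zero --- equivalently, some Krylov--Bogolyubov limit of the empirical measures of the $x$-orbit puts positive mass on $V\cap\Lambda'$. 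Ruling out the pathology in which every such limit sits fully on $Sing(X)$ (orbits of arbitrarily high density near singularities without converging to any of them) is to my mind the most delicate point, and is handled by revisiting the topological step under the hypothesis $x\in U'$ to guarantee a non-singular recurrent component in $\omega(x)$.
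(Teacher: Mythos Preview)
Your approach is genuinely different from the paper's and has a real gap at precisely the point you yourself flag as ``the most delicate''.

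Your strategy is to locate a witness $y\in\omega(x)\cap\Lambda'$ and propagate its hyperbolic times to $x$ via recurrence. Both steps are problematic. First, the existence of such $y$ is not guaranteed by the sketch you give: $\omega(x)$ could be, for instance, a heteroclinic cycle among Rovella-like singularities, every regular orbit of which lies in $W^s(Sing(X))$; connectedness of $\omega(x)$ together with Theorem~\ref{T1} does not exclude this. Second, and more seriously, even granting such a $y$, your final estimate requires $\limsup_n n/s_n>0$, i.e.\ that some empirical measure of the $x$-orbit puts positive mass on a fixed neighborhood of $y$. You say this is handled ``by revisiting the topological step'', but that step only asserts $y\in\omega(x)$, which says nothing about visit \emph{frequency}: the orbit of $x$ can spend an asymptotic fraction $1$ of its time near Rovella-like singularities (where area in $E^c$ contracts) while still visiting any neighborhood of $y$ infinitely often. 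Ruling this out is essentially the content of the lemma, so the argument is circular at that point.

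The paper avoids this entirely by a different mechanism. It decomposes $U_0$ into neighborhoods $V_L,V_R$ of the Lorenz-like and Rovella/resonant singularities, an open cover $W$ of $\Lambda'$ built from first hyperbolic times, and a compact remainder, obtaining a lower bound $|\det DX_t(x)|_{E^c_x}|\ge e^{\psi(x,t)t}$ in terms of the asymptotic time-fractions $\beta_i(x)$ spent in each region. The set $Z=\{x\in U_0:d(x)\le 0\}$ on which the resulting quantity is nonpositive is flow-invariant (the $\beta_i$ and $\rho$ are tail quantities), hence $Z\subset\Lambda$ by the trapping property; on $\Lambda'$ one computes $d=c'>0$ directly, since the entire forward orbit stays in $W$. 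Thus $Z\subset\Lambda\cap W^s(Sing(X))$, so $U'\cap Z=\emptyset$. This invariance-plus-trapping reduction is the idea your scheme is missing: rather than controlling an arbitrary orbit in $U'$, one shows the ``bad set'' already lives inside $\Lambda$, where the ASH hypothesis applies by definition.
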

\begin{proof}
First, we need to get an estimate of $\vert\det DX_t(\cdot)\vert_{(\cdot)}\vert$ in a neighborhood of the singularities of $\Lambda$. Let $\sigma$ be either a Rovella-like or resonant singularity, and let $V_{\sigma}$ be a neighborhood of $\sigma$. By shrinking $V_{\sigma}$ if it is necessary, by continuity of $DX_{(\cdot)}(x)$ and the choice of the cone field there are $\theta'_{\sigma}\leq0$ and $T_0>0$ such that for every $x\in V_{\sigma}$, 
\begin{displaymath}
\vert\det DX_t(x)\vert_{L_x}\vert\geq\frac{1}{2}\vert\det DX_t(\sigma)\vert_{E_{\sigma}^c}\vert=\frac{1}{2}e^{\theta'_{\sigma}t},
\end{displaymath}
for every plane $L_x\subset C_a^c(x)$ and every $t\in[0,T_0]$ such that $X_t(x)\in V_{\sigma}$. So, for $x\in V_{\sigma}$ and every $t>0$ such that $X_t(x)\in V_{\sigma}$ we have $t=mT_0+r$, $m\in\mathbb{N}$ and $0\leq r<T_0$, so that, by the above estimation, we have for every plane $L_x\subset C_a^c(x)$ that
\begin{displaymath}
\vert\det DX_t(x)\vert_{L_x}\vert\geq \left(\frac{1}{2}e^{\theta'_{\sigma}r}\right)\left(\frac{1}{2}e^{\theta'_{\sigma}T_0}\right)^m\geq K_{-}(\sigma)e^{\theta_{\sigma}t}, 
\end{displaymath}
where $\theta_{\sigma}=\frac{1}{T_0}\log\left(\frac{1}{2}e^{\theta'_{\sigma}T_0}\right)<0, K_{-}(\sigma)>0$. In a similar way, by taking $T_0$ large enough, there are $K_+>0$ and $\theta_{\sigma}>0$ such that 
\begin{displaymath}
\vert\det DX_t(x)\vert_{L_x}\vert\geq K_+(\sigma)e^{\theta_{\sigma}t},
\end{displaymath}
for every Lorenz-like singularity $\sigma$, every $x\in V_{\sigma}$ such that $X_t(x)\in V_{\sigma}$, and for every plane $L_x\subset C_a^c(x)$. In this case, we set $V_{Sing(X)}=V_R\cup V_L$, where $R$ denotes the set of Rovella-like or resonant singularities and $L$ denotes the set of Lorenz-like singularities. In this case, let
\begin{displaymath}
\theta_+=\min_{\sigma\in L}\theta_{\sigma}>0,\quad\theta_-=\min_{\sigma\in R}\lbrace \theta_{\sigma}\rbrace\leq 0,\text{ and } K=\min_{\sigma\in V_{Sing(X)}}\lbrace K_-(\sigma),K_+(\sigma)\rbrace>0.
\end{displaymath}

On the other hand, by ASH property there is a positive constant $c<C$, where $C$ is given by (\ref{ash}),  and $T_1>T_0$ large enough with the following property: for every $x\in\Lambda'=\Lambda\setminus W^s(Sing(X))$ there is a neighborhood $V_x$ of $x$ such that
\begin{displaymath}
\vert\det DX_t(y)\vert_{L_y}\vert\geq e^{ct_1},\quad \forall y\in V_x, t_1=t_1(x)\geq T_1,
\end{displaymath}
for every plane $L_y\subset C_a^c(y)$, where $t_1$ is the first hyperbolic time for $x$. In this case, denote
\begin{displaymath}
W_x=\bigcup_{0\leq t\leq t_1(x)}X_t(V_x).
\end{displaymath}
Then, the set $W=\bigcup_{x\in\Lambda'}W_x$ defines an open cover of $\Lambda'$. 

By compactness of $\overline{U_0\setminus (V_{Sing(X)}\cup W)}$, there are $T_2>0$ and $b\in\mathbb{R}$ such that
\begin{displaymath}
    \vert \det DX_r(z)\vert_{L_z}\vert\geq e^{br},\quad\forall (z,r)\in \left(\overline{U_0\setminus (V_{Sing(X)}\cup W)}\right)\times[0,T_2],\quad L_z\subset C_a^c(z).   
\end{displaymath}

Now, for every $x\in U_0$, let consider the following numbers:
\begin{itemize}
    \item $\beta_0(x)=\displaystyle\limsup_{t\to+\infty}\frac{1}{t}\int_0^t\chi_{\overline{U_0\setminus (V_{Sing(X)}\cup W)}}(X_s(x))ds$,
    \item $\beta_1(x)=\displaystyle\liminf_{t\to+\infty}\frac{1}{t}\int_0^t\chi_{V_L\cup W}(X_s(x))ds$, and 
    \item $\beta_2(x)=\displaystyle\limsup_{t\to+\infty}\frac{1}{t}\int_0^t\chi_{V_R\setminus W}(X_s(x))ds$.
\end{itemize}
By definition of these numbers, for every $\varepsilon>0$ there is $r>0$ such that for any $t>r$, one has
\begin{displaymath}
  \frac{1}{t}\int_0^t\chi_{\overline{U_0\setminus (V_{Sing(X)}\cup W)}}(X_s(x))ds\leq \beta_0(x)+\varepsilon,\quad \frac{1}{t}\int_0^t\chi_{V_L\cup W}(X_s(x))ds\geq \beta_1(x)-\varepsilon
\end{displaymath}
and
\begin{displaymath}
\frac{1}{t}\int_0^t\chi_{V_R\setminus W}(X_s(x))ds\leq \beta_2(x)+\varepsilon.
\end{displaymath}

So, for any $x\in U_0$, by splitting the orbit into orbit segments and by  joining the above estimations, we have that
\begin{equation}\label{festimate}
\vert \det DX_t(x)\vert_{E^c_x}\vert\geq e^{\psi(x,t)t},\quad\forall t>r,
\end{equation}
with
\begin{displaymath}
\psi(x,t)=\frac{(k_1(t)+k_2(t))\ln K}{t}+b\beta_0(x)+c'\beta_1(x)+\theta_{-}\beta_2(x)+(\theta_--c'+b)\varepsilon.
\end{displaymath}
Here $c'=\min\lbrace c,\theta_+\rbrace>0$ and the numbers $k_1(t)$ and $k_2(t)$ denote how many times the orbit of $x$ hits $V_R$ and $V_L$ in the interval  $[0,t]$, respectively, and these hitting points does not belong to $W$.

Now, let 
\begin{displaymath}
Z=\lbrace x\in U_0 : d(x)=\rho(x)+b\beta_0(x)+c'\beta_1(x)+\theta_-\beta_2(x)\leq 0\rbrace\subset U_0, 
\end{displaymath}
where $\rho(x)=\limsup_{t\to\infty}\left((k_1(t)+k_2(t))\ln K/t\right)<\infty$. Since $\rho(\cdot)$, $\beta_0(\cdot)$, $\beta_1(\cdot)$ and $\beta_2(\cdot)$ are invariant by the flow we have that $Z$ is an  invariant subset of $U_0$. So, since $\Lambda$ is attracting, it follows that $Z\subset\Lambda$. But, in $\Lambda'\cup L$ one has $k_1(t)=0$, $k_2(t)=0\text{ or }1$, $\beta_0(x)=0$, $\beta_2(x)=0$ and $\beta_1(x)=1$, so that $d(x)= c'>0$. In particular, $Z\subset W^s(R)$, so that $d(x)>0$ in $U'$. Moreover, note that $U'=\bigcup_{n\in\mathbb{N}}U'_n$, where $U'_n=\lbrace x\in U' : d(x)>1/n\rbrace$. Therefore, if $x\in U'$ there is $n\in\mathbb{N}$ such that $d(x)>1/n$. Thus, if we choose $\varepsilon_n>0$ satisfying 
\begin{displaymath}
1/n+(\theta_-+b-c')\varepsilon_n>d_n>0,
\end{displaymath} 
we obtain by \eqref{festimate} that
\begin{displaymath}
\limsup_{t\to\infty}\frac{1}{t}\log\vert \det DX_t(x)\vert_{E^c_x}\vert\geq d_n>0. 
\end{displaymath}
This concludes the proof.
\end{proof}

Now, recall that for a continuous map $f:M\to M$, the {\it{empirical probabilities of orbit of a point}} $x\in M$ are defined as
\begin{equation}\label{empiricas}
m_{n,x}=\frac{1}{n}\sum_{i=0}^{n-1}\delta_{f^j(x)},\quad n\in\mathbb{N}, 
\end{equation}  
where $\delta_y$ is the Dirac measure supported at $y\in M$. Let $p\omega_f(x)$ be the set of accumulation points of the sequence \eqref{empiricas} in the weak$^*$ topology. 

\begin{lemma}\label{lemma3.3}
For every point $x \in \Lambda \setminus W^s(Sing(X))$, the set $R$ is not in the support of any measure $\nu\in p\omega_f(x)$, where $f=X_1$.
\end{lemma}

\begin{proof}
Assume that $R\cap supp(\nu)\neq\emptyset$, and let consider $\sigma\in R\cap supp(\nu)$. Take a neighborhood $V$ of $\sigma$. By definition of $R$ and by shrinking $V$ if necessary, there exists $C_0>0$ and $N>0$ such that 
\begin{equation} \label{theta}
\vert\det Df^n(x)\vert_{E^c_x}\vert \leq C_0e^{\underline{\alpha}n}\leq e^{\overline{\alpha}n},    
\end{equation}
where $\underline{\alpha} \leq \overline{\alpha} \leq 0$, for every $n\geq N$ and every $x\in V$  satisfying $f^i(x)\in V$, for $i=0,\ldots,n$. 

Let $x \in \Lambda'$  and let $t_k$ be a sequence of $C$-hyperbolic times for $x$. It is easy to check that one can take  $t_k=n_k\in \mathbb{N}$, so that

\begin{equation} \label{TNH}
   \vert\det Df^{n_k}(x)\vert_{E^c_u}\vert \geq e^{C n_k},\quad k\geq 1. 
\end{equation}
Since $V^c$ is a compact set, there exists $a>1$ such that
\begin{equation} \label{Vc}
    \vert\det Df(z)\vert_{E^c_z}\vert \leq a \ \forall z \in V^c.
\end{equation}
So, if $B_n=\lbrace 1\leq m\leq n : f^m(x)\in V^c\rbrace$, we have by (\ref{theta}), (\ref{TNH}) and  (\ref{Vc}), 
\begin{eqnarray}
 \nonumber a^{\# B_{n_k}} e^{\# B^{c}_{n_k} \overline{\alpha}} &\geq& \prod_{i\in B_{n_k}} \vert\det Df(f^i(x))\vert_{E^c_{f^i(x)}}\vert \prod_{i\in B^c_{n_k}} \vert\det Df(f^i(x))\vert_{E^c_{f^i(x)}}\vert \\
 &=& \prod_{i=0}^{n_k-1} \vert\det Df(f^i(x))\vert_{E^c_{f^i(x)}}\vert \\
\nonumber &=& \vert\det Df^{n_k}(x)\vert_{E^c_x}\vert \\ 
\nonumber &\geq& e^{C n_k}.
\end{eqnarray}
So,
$$ \frac{\# B_{n_k}}{n_k} \geq \frac{\# B_{n_k}}{n_k} + \frac{\# B^c_{n_k}}{n_k \log a}\overline{\alpha} \geq  \frac{C}{\log a} >0, $$
and hence 
$$\limsup_{n\to\infty}\frac{\#B_n}{n}=\alpha(x)>0.$$

Now, by the above relation,
\begin{displaymath}
0\leq\alpha'(x)=\limsup_{n\to\infty}\frac{\#B_n^c}{n}<1.
\end{displaymath}
In particular, if $\varepsilon>0$ satisfies $\alpha'(x)+\varepsilon<\alpha<1$, there is $N\in\mathbb{N}$ such that
\begin{equation}
 \frac{\#B_{n}^c}{n}<\alpha,\quad\forall n\geq N.    
\end{equation}

Let $V'$ be a compact neighborhood of $\sigma$ contained in $V$. By Urysohn's lemma there is a continuous function $\varphi:M\to\mathbb{R}$ such that $\varphi(V')=1$ and $\varphi(M\setminus V)=0$. So, it follows that $\int\varphi d\nu\geq 1$. Therefore, 
\begin{displaymath}
\left\vert \frac{1}{n}\sum_{j=0}^{n-1}\varphi(f^j(x))-\int \varphi d\nu\right\vert\geq 1-\frac{\#B_n^c}{n}>1-\alpha>0,\quad n\geq N. 
\end{displaymath}
This shows that $\nu\notin p\omega_f(x)$, which is a contradiction. This proves the result.
\end{proof}

\begin{proof}[Proof of Theorem \ref{HomClass3}]
Let $f=X_1$. By Lemma \ref{lemma3.3} we have that $supp(\nu)\subset \Lambda'\cup L$, where $\Lambda'=\Lambda\setminus W^s(Sing(X))$, for every $\nu\in p\omega_f(x)$, $x\in U'$. Besides, since $\omega(x)\subset\Lambda$ for any $x\in U'$, we have by Theorem F in \cite{CYZ} that
\begin{displaymath}
    h_{\nu}(f)\geq \int \chi_1(x)+\chi_2(x)d\nu,  
\end{displaymath}
where $\chi_i(x)$, $i=1,2$, are the Lyapunov exponents of $Df$ on $E_x^c$.  

Now, since $\chi_1(\sigma)+\chi_2(\sigma)=\lambda_s(\sigma)+\lambda_u(\sigma)>0$ for every $\sigma\in L$ and  
\begin{equation}\label{sumpos}
\chi_1(x)+\chi_2(x)=\lim_{n\to\infty}\frac{1}{n}\log\vert \det(Df^n(x)\vert_{E_x^c})\vert\geq C,\quad \forall x\in\Lambda',    
\end{equation}
we have 
\begin{displaymath}
h_{\nu}(f)\geq \int_{\Lambda'\cup L} \chi_1(x)+\chi_2(x)d\nu\geq C+ \#(L)\min_{\sigma\in L}[\chi_1(\sigma)+\chi_2(\sigma)]>0.
\end{displaymath}
So, by the Variational Principle, we have $h(X)>0$. So, there is an ergodic measure $\mu$ with $h_{\mu}(X)>0$ supported on $\Lambda'$. Then, by Theorem \ref{erghyp}, the measure $\mu$ is hyperbolic. Moreover, its hyperbolic splitting is induced by the splitting on $\Lambda'$. In particular, this splitting is dominated with index $\text{dim }E$. So, since $\Lambda$ is attracting, it follows from Theorem 4.1 in \cite{PYY} that $\Lambda$ contains a periodic orbit.

For the last part, Let $z\in\Lambda$ such that $\omega(z)=\Lambda$. By hypothesis, there is a sequence of periodic orbits $p_n$, with period $\tau(p_n)$, such that $p_n\to z$. In particular, we have $\tau(p_n)\to\infty$. So, by Hyperbolic lemma, all these orbits are hyperbolic of saddle type, so that there is $N\in\mathbb{N}$ large enough such that $\gamma_{p_n}\sim \gamma_{p_m}$ for $n,m\geq N$, i.e., $W^s(p)\pitchfork W^u(p)$ and $W^s(q)\pitchfork W^u(p)$. This shows that $z\in H(p_N)$, since $H(p_N)$ is closed. Hence, the result is obtained by the denseness of $O(z)$.
\end{proof}

\section*{Acknowledgments}
The first author want to thanks the hospitality of the Department of Mathematics of SUSTech-China where part of this work was developed. The second author wishes to express his sincere gratitude to all the authors, especially the third and fourth, for allowing him to join this project. He learned a great deal from them and greatly values the opportunity to have worked together as a team. He hopes we can continue collaborating in the future. The third author would like to thank the Department of Mathematics of SUSTech-China for their hospitality, where part of this work was developed.

\section*{STATEMENTS}

No data was used for the research described in the article.

\bibliographystyle{plain}
\begin{bibdiv}
\begin{biblist}

\bib{Ab}{article}{
      author={Abramov, L.~M.},
       title={On the entropy of a flow},
        date={1959},
        ISSN={0002-3264},
     journal={Dokl. Akad. Nauk SSSR},
      volume={128},
       pages={873\ndash 875},
      review={\MR{113985}},
}

\bib{A}{article}{
      author={Anosov, D.~V.},
       title={Geodesic flows on closed {R}iemannian manifolds of negative
  curvature},
        date={1967},
        ISSN={0371-9685},
     journal={Trudy Mat. Inst. Steklov.},
      volume={90},
       pages={209},
      review={\MR{224110}},
}

\bib{AM}{article}{
      author={Araujo, V.},
      author={Melbourne, I.},
       title={Existence and smoothness of the stable foliation for sectional
  hyperbolic attractors},
        date={2017},
        ISSN={0024-6093,1469-2120},
     journal={Bull. Lond. Math. Soc.},
      volume={49},
      number={2},
       pages={351\ndash 367},
         url={https://doi.org/10.1112/blms.12037},
      review={\MR{3656303}},
}

\bib{AP}{article}{
      author={Araujo, V.},
      author={Pacifico, M.~J.},
      author={Pujals, E.~R.},
      author={Viana, M.},
       title={Singular-hyperbolic attractors are chaotic},
        date={2009},
        ISSN={0002-9947,1088-6850},
     journal={Trans. Amer. Math. Soc.},
      volume={361},
      number={5},
       pages={2431\ndash 2485},
         url={https://doi.org/10.1090/S0002-9947-08-04595-9},
      review={\MR{2471925}},
}

\bib{ASS}{article}{
      author={Araujo, V.},
      author={Salgado, L.},
      author={S., Sousa},
       title={Physical measures for mostly sectional expanding flows},
        date={2023},
     journal={arXiv:2205.04207},
      eprint={2205.04207},
         url={https://arxiv.org/abs/2205.04207},
}

\bib{Arb}{article}{
      author={Arbieto, A.},
       title={Sectional lyapunov exponents},
        date={2010},
        ISSN={0002-9939,1088-6826},
     journal={Proc. Amer. Math. Soc.},
      volume={138},
      number={9},
       pages={3171\ndash 3178},
         url={https://doi.org/10.1090/S0002-9939-10-10410-9},
      review={\MR{2653942}},
}

\bib{AR}{article}{
      author={Arroyo, A.},
      author={Pujals, E.~R.},
       title={Dynamical properties of singular-hyperbolic attractors},
        date={2007},
        ISSN={1078-0947,1553-5231},
     journal={Discrete Contin. Dyn. Syst.},
      volume={19},
      number={1},
       pages={67\ndash 87},
         url={https://doi.org/10.3934/dcds.2007.19.67},
      review={\MR{2318274}},
}

\bib{Ar1}{article}{
      author={Artigue, A.},
       title={Kinematic expansive flows},
        date={2016},
        ISSN={0143-3857,1469-4417},
     journal={Ergodic Theory Dynam. Systems},
      volume={36},
      number={2},
       pages={390\ndash 421},
         url={https://doi.org/10.1017/etds.2014.65},
      review={\MR{3503030}},
}

\bib{BV}{article}{
      author={Barreira, L.},
      author={Valls, C.},
       title={H\"older {G}robman-{H}artman linearization},
        date={2007},
        ISSN={1078-0947,1553-5231},
     journal={Discrete Contin. Dyn. Syst.},
      volume={18},
      number={1},
       pages={187\ndash 197},
         url={https://doi.org/10.3934/dcds.2007.18.187},
      review={\MR{2276493}},
}

\bib{BM}{article}{
      author={Bautista, S.},
      author={Morales, C.},
       title={Existence of periodic orbits for singular-hyperbolic sets},
        date={2006},
        ISSN={1609-3321,1609-4514},
     journal={Mosc. Math. J.},
      volume={6},
      number={2},
       pages={265\ndash 297, 406},
         url={https://doi.org/10.17323/1609-4514-2006-6-2-265-297},
      review={\MR{2270614}},
}

\bib{BC}{article}{
      author={Benedicks, M.},
      author={Carleson, L.},
       title={On iterations of {$1-ax^2$} on {$(-1,1)$}},
        date={1985},
        ISSN={0003-486X,1939-8980},
     journal={Ann. of Math. (2)},
      volume={122},
      number={1},
       pages={1\ndash 25},
         url={https://doi.org/10.2307/1971367},
      review={\MR{799250}},
}

\bib{BdL}{article}{
      author={Bonatti, C.},
      author={da~Luz, A.},
       title={Star flows and multisingular hyperbolicity},
        date={2021},
        ISSN={1435-9855,1435-9863},
     journal={J. Eur. Math. Soc. (JEMS)},
      volume={23},
      number={8},
       pages={2649\ndash 2705},
         url={https://doi.org/10.4171/jems/1064},
      review={\MR{4269424}},
}

\bib{B}{incollection}{
      author={Bowen, R.},
       title={Topological entropy and axiom {${\rm A}$}},
        date={1970},
   booktitle={Global {A}nalysis ({P}roc. {S}ympos. {P}ure {M}ath., {V}ols.
  {XIV}, {XV}, {XVI}, {B}erkeley, {C}alif., 1968)},
      series={Proc. Sympos. Pure Math.},
      volume={XIV-XVI},
   publisher={Amer. Math. Soc., Providence, RI},
       pages={23\ndash 41},
      review={\MR{262459}},
}

\bib{BW}{article}{
      author={Bowen, R.},
      author={Walters, P.},
       title={Expansive one-parameter flows},
        date={1972},
        ISSN={0022-0396,1090-2732},
     journal={J. Differential Equations},
      volume={12},
       pages={180\ndash 193},
         url={https://doi.org/10.1016/0022-0396(72)90013-7},
      review={\MR{341451}},
}

\bib{CYZ}{article}{
      author={Crovisier, S.},
      author={Yang, D.},
      author={Zhang, J.},
       title={Empirical measures of partially hyperbolic attractors},
        date={2020},
        ISSN={0010-3616,1432-0916},
     journal={Comm. Math. Phys.},
      volume={375},
      number={1},
       pages={725\ndash 764},
         url={https://doi.org/10.1007/s00220-019-03668-1},
      review={\MR{4082180}},
}

\bib{FL}{article}{
      author={Feng, J.},
      author={Li, Y.},
       title={The weak {S}male horseshoe and mean hyperbolicity},
        date={2021},
        ISSN={0022-0396,1090-2732},
     journal={J. Differential Equations},
      volume={299},
       pages={154\ndash 195},
         url={https://doi.org/10.1016/j.jde.2021.07.020},
      review={\MR{4292109}},
}

\bib{GY}{article}{
      author={Gan, S.},
      author={Yang, D.},
       title={Morse-{S}male systems and horseshoes for three dimensional
  singular flows},
        date={2018},
        ISSN={0012-9593,1873-2151},
     journal={Ann. Sci. \'Ec. Norm. Sup\'er. (4)},
      volume={51},
      number={1},
       pages={39\ndash 112},
         url={https://doi.org/10.24033/asens.2351},
      review={\MR{3764038}},
}

\bib{Gu1}{book}{
      author={Guckenheimer, J.},
       title={A strange, strange attractor},
   publisher={Springer New York},
     address={New York, NY},
        date={1976},
        ISBN={978-1-4612-6374-6},
         url={https://doi.org/10.1007/978-1-4612-6374-6_25},
}

\bib{Ja}{article}{
      author={Jakobson, M.~V.},
       title={Absolutely continuous invariant measures for one-parameter
  families of one-dimensional maps},
        date={1981},
        ISSN={0010-3616,1432-0916},
     journal={Comm. Math. Phys.},
      volume={81},
      number={1},
       pages={39\ndash 88},
         url={http://projecteuclid.org/euclid.cmp/1103920159},
      review={\MR{630331}},
}

\bib{K}{article}{
      author={Katok, A.},
       title={Lyapunov exponents, entropy and periodic orbits for
  diffeomorphisms},
        date={1980},
        ISSN={0073-8301,1618-1913},
     journal={Inst. Hautes \'Etudes Sci. Publ. Math.},
      number={51},
       pages={137\ndash 173},
         url={http://www.numdam.org/item?id=PMIHES_1980__51__137_0},
      review={\MR{573822}},
}

\bib{LSWW}{article}{
      author={Li, M.},
      author={Shi, Y.},
      author={Wang, S.},
      author={Wang, X.},
       title={Measures of intermediate entropies for star vector fields},
        date={2020},
        ISSN={0021-2172,1565-8511},
     journal={Israel J. Math.},
      volume={240},
      number={2},
       pages={791\ndash 819},
         url={https://doi.org/10.1007/s11856-020-2080-2},
      review={\MR{4193151}},
}

\bib{SLiao}{article}{
      author={Liao, S.},
       title={A basic property of a certain class of differential systems},
        date={1979},
        ISSN={0583-1431},
     journal={Acta Math. Sinica},
      volume={22},
      number={3},
       pages={316\ndash 343},
      review={\MR{549216}},
}

\bib{Lo}{article}{
      author={Lorenz, E.~N.},
       title={Deterministic nonperiodic flow},
        date={1963},
        ISSN={0022-4928,1520-0469},
     journal={J. Atmospheric Sci.},
      volume={20},
      number={2},
       pages={130\ndash 141},
         url={https://doi.org/10.1175/1520-0469(1963)020<0130:DNF>2.0.CO;2},
      review={\MR{4021434}},
}

\bib{Xma}{article}{
      author={Ma, X.},
       title={Existence of periodic orbits and horseshoes for semiflows on a
  separable {B}anach space},
        date={2022},
        ISSN={0944-2669,1432-0835},
     journal={Calc. Var. Partial Differential Equations},
      volume={61},
      number={6},
       pages={Paper No. 217, 37},
         url={https://doi.org/10.1007/s00526-022-02330-4},
      review={\MR{4493272}},
}

\bib{Man}{article}{
      author={Ma\~n\'e, R.},
       title={An ergodic closing lemma},
        date={1982},
        ISSN={0003-486X},
     journal={Ann. of Math. (2)},
      volume={116},
      number={3},
       pages={503\ndash 540},
}

\bib{Me3}{article}{
      author={Metzger, R.},
      author={Morales, C.},
       title={Sectional-hyperbolic systems},
        date={2008},
        ISSN={0143-3857,1469-4417},
     journal={Ergodic Theory Dynam. Systems},
      volume={28},
      number={5},
       pages={1587\ndash 1597},
         url={https://doi.org/10.1017/S0143385707000995},
      review={\MR{2449545}},
}

\bib{Mo}{article}{
      author={Morales, C.~A.},
       title={A note on periodic orbits for singular-hyperbolic flows},
        date={2004},
        ISSN={1078-0947,1553-5231},
     journal={Discrete Contin. Dyn. Syst.},
      volume={11},
      number={2-3},
       pages={615\ndash 619},
         url={https://doi.org/10.3934/dcds.2004.11.615},
      review={\MR{2083434}},
}

\bib{MP5}{article}{
      author={Morales, C.~A.},
      author={Pacifico, M.~J.},
       title={A dichotomy for three-dimensional vector fields},
        date={2003},
        ISSN={0143-3857,1469-4417},
     journal={Ergodic Theory Dynam. Systems},
      volume={23},
      number={5},
       pages={1575\ndash 1600},
         url={https://doi.org/10.1017/S0143385702001621},
      review={\MR{2018613}},
}

\bib{Mo3}{article}{
      author={Morales, C.~A.},
      author={Pacifico, M.~J.},
      author={Pujals, E.~R.},
       title={Singular hyperbolic systems},
        date={1999},
        ISSN={0002-9939,1088-6826},
     journal={Proc. Amer. Math. Soc.},
      volume={127},
      number={11},
       pages={3393\ndash 3401},
         url={https://doi.org/10.1090/S0002-9939-99-04936-9},
      review={\MR{1610761}},
}

\bib{MPP}{article}{
      author={Morales, C.~A.},
      author={Pacifico, M.~J.},
      author={Pujals, E.~R.},
       title={Robust transitive singular sets for 3-flows are partially
  hyperbolic attractors or repellers},
        date={2004},
        ISSN={0003-486X,1939-8980},
     journal={Ann. of Math. (2)},
      volume={160},
      number={2},
       pages={375\ndash 432},
         url={https://doi.org/10.4007/annals.2004.160.375},
      review={\MR{2123928}},
}

\bib{Mo4}{article}{
      author={Morales, C.~A.},
      author={San~Martin, B.},
       title={Contracting singular horseshoe},
        date={2017},
        ISSN={0951-7715,1361-6544},
     journal={Nonlinearity},
      volume={30},
      number={11},
       pages={4208\ndash 4219},
         url={https://doi.org/10.1088/1361-6544/aa864e},
      review={\MR{3718737}},
}

\bib{Osel}{article}{
      author={Oseledec, V.~I.},
       title={A multiplicative ergodic theorem. {C}haracteristic {L}japunov,
  exponents of dynamical systems},
        date={1968},
        ISSN={0134-8663},
     journal={Trudy Moskov. Mat. Ob\v s\v c.},
      volume={19},
       pages={179\ndash 210},
      review={\MR{240280}},
}

\bib{PYY}{article}{
      author={Pacifico, M.~J.},
      author={Yang, F.},
      author={Yang, J.},
       title={Entropy theory for sectional hyperbolic flows},
        date={2021},
        ISSN={0294-1449,1873-1430},
     journal={Ann. Inst. H. Poincar\'e{} C Anal. Non Lin\'eaire},
      volume={38},
      number={4},
       pages={1001\ndash 1030},
         url={https://doi.org/10.1016/j.anihpc.2020.10.001},
      review={\MR{4266233}},
}

\bib{PY}{article}{
      author={Palis, J.},
      author={Yoccoz, J.~C.},
       title={Fers \`a{} cheval non uniform\'ement hyperboliques engendr\'es
  par une bifurcation homocline et densit\'e{} nulle des attracteurs},
        date={2001},
        ISSN={0764-4442},
     journal={C. R. Acad. Sci. Paris S\'er. I Math.},
      volume={333},
      number={9},
       pages={867\ndash 871},
         url={https://doi.org/10.1016/S0764-4442(01)02139-5},
      review={\MR{1873226}},
}

\bib{RV}{article}{
      author={Rego, E.},
      author={Vivas, K.~J.},
       title={On chaotic behavior of {ASH} attractors},
        date={2024},
        ISSN={1609-3321,1609-4514},
     journal={Mosc. Math. J.},
      volume={24},
      number={1},
       pages={41\ndash 61},
      review={\MR{4717511}},
}

\bib{Ro}{article}{
      author={Rovella, A.},
       title={The dynamics of perturbations of the contracting {L}orenz
  attractor},
        date={1993},
        ISSN={0100-3569},
     journal={Bol. Soc. Brasil. Mat. (N.S.)},
      volume={24},
      number={2},
       pages={233\ndash 259},
         url={https://doi.org/10.1007/BF01237679},
      review={\MR{1254985}},
}

\bib{SMV2}{article}{
      author={San~Mart\'in, B.},
      author={Vivas, K.},
       title={The {R}ovella attractor is asymptotically sectional-hyperbolic},
        date={2020},
        ISSN={0951-7715,1361-6544},
     journal={Nonlinearity},
      volume={33},
      number={6},
       pages={3036\ndash 3049},
         url={https://doi.org/10.1088/1361-6544/ab7d28},
      review={\MR{4105384}},
}

\bib{SMV}{article}{
      author={San~Mart\'in, B.},
      author={Vivas, K.~J.},
       title={Asymptotically sectional-hyperbolic attractors},
        date={2019},
        ISSN={1078-0947,1553-5231},
     journal={Discrete Contin. Dyn. Syst.},
      volume={39},
      number={7},
       pages={4057\ndash 4071},
         url={https://doi.org/10.3934/dcds.2019163},
      review={\MR{3960496}},
}

\bib{SMV3}{article}{
      author={San~Mart\'in, B.},
      author={Vivas, K.~J.},
       title={Physical measures for ash attractors},
        date={2023},
        ISSN={0022-0396,1090-2732},
     journal={J. Differential Equations},
      volume={367},
       pages={366\ndash 381},
         url={https://doi.org/10.1016/j.jde.2023.05.003},
      review={\MR{4589198}},
}

\bib{shiganwen}{article}{
      author={Shi, Y.},
      author={Gan, S.},
      author={Wen, L.},
       title={On the singular-hyperbolicity of star flows},
        date={2014},
        ISSN={1930-5311,1930-532X},
     journal={J. Mod. Dyn.},
      volume={8},
      number={2},
       pages={191\ndash 219},
         url={https://doi.org/10.3934/jmd.2014.8.191},
      review={\MR{3277201}},
}

\bib{SYY}{article}{
      author={Shi, Y.},
      author={Yang, F.},
      author={Yang, J.},
       title={A countable partition for singular flows, and its application on
  the entropy theory},
        date={2022},
        ISSN={0021-2172,1565-8511},
     journal={Israel J. Math.},
      volume={249},
      number={1},
       pages={375\ndash 429},
         url={https://doi.org/10.1007/s11856-022-2316-4},
      review={\MR{4462637}},
}

\bib{S}{article}{
      author={Smale, S.},
       title={Differentiable dynamical systems},
        date={1967},
        ISSN={0002-9904},
     journal={Bull. Amer. Math. Soc.},
      volume={73},
       pages={747\ndash 817},
         url={https://doi.org/10.1090/S0002-9904-1967-11798-1},
      review={\MR{228014}},
}

\bib{Tu}{book}{
      author={Tucker, W.~B.},
       title={The {L}orenz attractor exists},
   publisher={ProQuest LLC, Ann Arbor, MI},
        date={1998},
        ISBN={978-9150-61296-7},
  url={http://gateway.proquest.com/openurl?url_ver=Z39.88-2004&rft_val_fmt=info:ofi/fmt:kev:mtx:dissertation&res_dat=xri:pqdiss&rft_dat=xri:pqdiss:C707015},
        note={Thesis (Ph.D.)--Uppsala Universitet (Sweden)},
      review={\MR{2715242}},
}

\bib{T}{article}{
      author={Turaev, D.~V.},
      author={Shil\cprime~nikov, L.~P.},
       title={An example of a wild strange attractor},
        date={1998},
        ISSN={0368-8666,2305-2783},
     journal={Mat. Sb.},
      volume={189},
      number={2},
       pages={137\ndash 160},
         url={https://doi.org/10.1070/SM1998v189n02ABEH000300},
      review={\MR{1622321}},
}

\bib{Walters}{book}{
      author={Walters, P.},
       title={An introduction to ergodic theory},
      series={Graduate Texts in Mathematics},
   publisher={Springer-Verlag, New York-Berlin},
        date={1982},
      volume={79},
        ISBN={0-387-90599-5},
      review={\MR{648108}},
}

\bib{WYZ}{article}{
      author={Wu, W.},
      author={Yang, D.},
      author={Zhang, Y.},
       title={On the growth rate of periodic orbits for vector fields},
        date={2019},
        ISSN={0001-8708,1090-2082},
     journal={Adv. Math.},
      volume={346},
       pages={170\ndash 193},
         url={https://doi.org/10.1016/j.aim.2019.01.048},
      review={\MR{3907833}},
}

\end{biblist}
\end{bibdiv}

\end{document}